\journal{Stochastic Processes and their Applications}
\newtheorem{theorem}{Theorem}
\newtheorem{corollary}{Corollary}
\newtheorem{lemma}{Lemma}
\newtheorem{proposition}{Proposition}
\theoremstyle{remark}
  \newtheorem*{remark}{Remark}
\theoremstyle{definition}
\newcommand{\E}{\mathbb{E}}
\newcommand{\Prob}{\mathbb{P}}
\newcommand{\floor}[1]{\lfloor{#1}\rfloor}
\newcommand{\norm}[1]{\left\Vert{#1}\right\Vert}
\newcommand{\abs}[1]{\left\vert{#1}\right\vert}
\DeclareMathOperator*{\supp}{supp}
\DeclareMathOperator*{\Var}{Var}
\DeclareMathOperator*{\Cov}{Cov}
\DeclareMathOperator*{\Null}{Null}
\DeclareMathOperator*{\rank}{rank}
\DeclareMathOperator*{\Ran}{Ran}
\begin{document}

\begin{frontmatter}

\title{\textsc{Conjugate Processes}: \\ Theory and Application to Risk Forecasting}

\author[mymainaddress]{Eduardo Horta\corref{mycorrespondingauthor}}
\cortext[mycorrespondingauthor]{Corresponding author}
\ead{eduardo.horta@ufrgs.br}

\author[mymainaddress]{Flavio Ziegelmann}

\address[mymainaddress]{Universidade Federal do Rio Grande do Sul, Department of Statistics, 9500 Bento Gonçalves Av., 43--111, Porto Alegre, RS, Brazil 91509-900.}

\begin{abstract}
Many dynamical phenomena display a cyclic behavior, in the sense that time can be partitioned into units within which distributional aspects of a process are homogeneous. In this paper, we introduce a class of models - called conjugate processes - allowing the sequence of marginal distributions of a cyclic, continuous-time process to evolve stochastically in time. The connection between the two processes is given by a fundamental compatibility equation. Key results include Laws of Large Numbers in the presented framework. We provide a constructive example which illustrates the theory, and give a statistical implementation to risk forecasting in financial data.
\end{abstract}

\begin{keyword}
random measure\sep covariance operator\sep dimension reduction\sep functional time series\sep high frequency financial data\sep risk forecasting
\MSC[2010] 60G57\sep 60G10\sep 62G99\sep 62M99
\end{keyword}

\end{frontmatter}


\section{Introduction}
Many dynamical phenomena display a cyclic behavior, in the sense that time can be partitioned into units within which certain distributional aspects of a process are homogeneous. This idea is the starting point of the theory developed in Bosq~\cite{bosq2000linear}, for instance. The standard probabilistic approach to modeling the evolution of a system over time usually begins with specification of a certain probability measure on the space of sample paths, induced by a family of finite--dimensional distributions. In this setting, consideration of conditional probabilities usually involves the notion of `past information' as summarized by a filtering or the past trajectory of the process. We shall take a different approach, by introducing a latent process which permits us to interpret the cyclic character of a process in a conditional, distributional sense. We consider the following model. A sequence of random probability measures $\xi_0,\xi_1,\dots,\xi_t,\dots$ evolves stochastically in time. Associated to these probabilities is a continuous time, real-valued stochastic process $\left(X_\tau:\,\tau\geq 0\right)$ that satisfies the following condition, for each Borel set $B$ in the real line,
\begin{equation}\label{eq:weakly-conjugate-model}
	\Prob\left[X_\tau\in B\,\vert\,\xi_0, \xi_1, \dots\right] = \xi_t\left(B\right),\quad \tau\in\left[t,t+1\right).
\end{equation}
We shall call each interval $\left[t,t+1\right)$ the $t$-th \emph{cycle}. Of course, equation~\eqref{eq:weakly-conjugate-model} implies that, for $\tau\in\left[t,t+1\right)$, one has $\Prob\left[X_\tau \in B\,\vert \,\xi_0,\dots, \xi_t\right] = \Prob\left[X_\tau \in B\,\vert\,\xi_t\right] = \xi_t\left(B\right)$. This can be interpreted as meaning that the process $\left(X_\tau\right)$ has marginal conditional distribution $\xi_t$ during cycle $t$, and that past and future information about the $\xi_j's$ is to some extent irrelevant when $\xi_t$ is given. Little further probabilistic structure is imposed on $\left(X_\tau\right)$. Notice however that the distribution of $\left(X_\tau:\,\tau\geq 0\right)$ is not entirely determined by \eqref{eq:weakly-conjugate-model}. The model is potentially useful in situations where there is a natural notion of a cycle in the behavior of the process $\left(X_\tau\right)$, and where the main interest concerns statistical (i.e.\ distributional) aspects of the process, rather than `sample-path' aspects, within each cycle. Possible applications include temperature measurements and intraday stock market return processes, the latter of which we illustrate below with a real data set. This model does have a Bayesian flavor, in that the distribution of the random variables $X_\tau$ are themselves random elements in a space of  probability measures, but we shall not sail in this direction here. From now on we will go without saying that the index sets for $t$ and $\tau$ are $0,1,2,\dots$ and $\mathbb{R}^+$ respectively. A pair $\left(\xi_t,\,X_\tau\right)$, where $\left(\xi_t\right)$ is a sequence of random probability measures, and $\left(X_\tau\right)$ is a process satisfying the compatibility condition \eqref{eq:weakly-conjugate-model}, will be called a \emph{conjugate process}. $\left(\xi_t\right)$ is the \emph{latent} (or \emph{hidden}) \emph{distribution process} and $\left(X_\tau\right)$ is the \emph{observable process}. Notice that the probabilistic structure of the latent process $\left(\xi_t\right)$ can be defined `prior' to even mentioning the observable process $\left(X_\tau\right)$. In particular, the latter can be essentially anything as long as \eqref{eq:weakly-conjugate-model} holds.

In our model the evolution of $\left(X_\tau\right)$ over time is driven by the measure valued process $\left(\xi_t\right)$. Consideration of random measures, together with the Hilbert space embedding introduced in Section~\ref{sec:Hilbert-embedding}, places our methodology in the realm of probability in function spaces, which has a long--standing tradition in the probability literature. The theory of probability measures on function spaces first rose from the need to interpret stochastic processes as random elements with values in spaces of functions, the original insight likely due to Wiener, who constructed a probability measure on the space of continuous functions -- namely, Brownian motion -- yet before Kolmogorov's axiomatization of probability theory. It eventually became clear that a convenient and quite general approach is to consider probability measures in metric spaces, as established for instance in the classic texts Billingsley~\cite{billingsley2009convergence} and Parthasarathy~\cite{parthasarathy1967probability}. See also Van der Vaart and Wellner~\cite{van1996weak} for a modern account. A derived literature considers random elements (and hence probability measures) in Banach spaces, of which a very good exposition can be found in the classic texts Ledoux and Talagrand~\cite{ledoux1991probability} and Vakhania et al.~\cite{vakhania1987probability}. For stationary sequences and linear processes in Banach spaces, the monograph from Bosq~\cite{bosq2000linear} is a complete account.

The concept of random probability measures is very important in the theory of Bayesian nonparametrics -- see Ghosh and Ramamoorthi~\cite{ghosh2003bayesian}. Our approach however places us closer to the theory of inference on objects pertaining to function spaces, which in the statistics literature has come to be known as Functional Data Analysis (hereafter \textsc{fda}) -- see the cornerstone monograph by Ramsay and Silverman~\cite{ramsay1998functional} for a thorough treatment on the topic. In recent years, \textsc{fda} has received growing attention from researchers of a wide spectrum of academic disciplines; see for instance the collection edited by Dabo-Niang and Ferraty~\cite{dabo2008functional} for a discussion on recent developments and many applications, and also Benko et al.~\cite{benko2009common} who provide a very interesting application of \textsc{fda} to estimation of implied volatility. A blend of theory and application can be found in Ferraty and Vieu~\cite{ferraty2006nonparametric} and Damon and Guillas~\cite{damon2005estimation}. A central technique in this context is that of functional principal components analysis. At short, such methodology -- whose foundation lies in the Karhunen-Loève Theorem -- seeks a decomposition of the observed functions as orthogonal projections onto a suitable orthonormal basis corresponding to the eigenfunctions of a covariance operator. See Panaretos and Tavakoli~\cite{panaretos2013cramer} for a motivation. The spectral representations in Theorems~\ref{thm:spectral-representation-F_t} and \ref{thm:spectral-representation-F_t-Rmu} are straightly related to this methodology. Hall and Vial~\cite{hall2006assessing} study functional data in the presence of imprecise measurement -- due to rounding, experimental errors, etc. -- a scenario in which some complications arise regarding estimation of the covariance operator. Bathia et al.~\cite{bathia2010identifying} tackle this issue in a functional time series framework. Our original insight, which ultimately led us to proposing equation \eqref{eq:weakly-conjugate-model}, was inspired by an application of their methodology to modeling the dynamics of probability density functions. In our framework the observable random functions under consideration are the empirical {cumulative distribution functions (hereafter \textsc{cdf}s)} of the process $\left(X_\tau\right)$ sampled in each cycle. We also consider estimation of functional parameters, namely the eigenfunctions introduced in equation~\eqref{eq:F_t-spectral-representation}.

The text is organized as follows. In the next section, a formalism is proposed for the ideas presented above; some notation is established, and basic properties of conjugate processes are derived under this formalism. The most important result here is Theorem~\ref{thm:spectral-representation-F_t}, which establishes a spectral representation, in a suitable Hilbert space, of the latent process $\left(\xi_t\right)$. In Section~\ref{sec:inference} we lean towards inference, the ultimate goal being estimation -- through sampling the observable process $\left(X_\tau\right)$ only -- of quantities related to an alternate spectral representation of the latent process. The latter representation -- which is the content of Theorem~\ref{thm:spectral-representation-F_t-Rmu} -- is introduced inspired by the methodology put forth by Bathia et al.~\cite{bathia2010identifying} which solved an identification problem in the functional time--series framework. Consistency of proposed estimators is established in Theorems~\ref{thm:LLN-for-Fhat}, \ref{thm:LLN-for-psihat-no-rate} and \ref{thm:LLN-for-psihat}, whereas Proposition~\ref{thm:LLN_etahat} and Corollary~\ref{thm:LLN2_etahat} establish a ``quasi--consistency'' property for an estimator of a latent $\ell^2$ time series that characterizes the dynamic aspects of $\left(\xi_t\right)$. In Section~\ref{sec:example} a constructive, if simple, example is given which elucidates the theory. Finally, an application to high--frequency financial data is given in Section~\ref{sec:application}. Assuming high--frequency financial returns share the same marginal distribution in each day, but allowing said marginals to vary stochastically from day to day, we are able to characterize the dynamic aspects of the latent distribution process via a scalar time series. The latter can be modeled in a standard fashion -- say, as an \textsc{arma} process -- and forecasts can be generated from which measures of risk may be recovered and predicted. In summary, the method allows one, at the end of each day, to use current information to forecast \emph{distributional} aspects of the observable process $\left(X_\tau\right)$ in the next cycle. In Appendix~\ref{sec:bit-theory} we provide some of the necessary theoretical background. The proofs of all propositions in the main text are relegated to Appendix~\ref{sec:proofs}. In Appendix~\ref{sec:estimation} computational aspects of the methodology are discussed, and explicit formulas of the proposed estimators are given.

\section{Formalism and basic properties}
We consider given a probability space $\left(\Omega,\mathscr{A},\Prob\right)$ where a conjugate process $\left(\xi_t, X_\tau\right)$ is defined. In the above discussion we have introduced $\xi_0, \xi_1,\dots$ as a sequence of random probability measures which evolves stochastically in time. Formally, the meaning of this assertion is embodied in the following assumption, which we make throughout this paper.

\begin{description}[leftmargin=!]
	\item[Assumption~S] \emph{$\left(\xi_t:\, t=0,1,\dots\right)$ is a stationary sequence of random elements taking values in the space $M_1\left(\mathbb{R}\right)$}.
\end{description}

Here, $M_1\left(\mathbb{R}\right)$ is the set of all Borel probability measures on $\mathbb{R}$. We refer the reader to the Appendix \ref{sec:bit-theory} for further definitions and some basic theory. Under Assumption~S, the expectation (technically, the baricenter) of $\xi_t$ does not depend on $t$, and we shall denote it by $\E\xi_0$. Regarding the unconditional marginals of $\left(X_\tau\right)$ we have the following result.

\begin{lemma}\label{thm:X_tau-unconditional-distribution}
The random variable $X_\tau$ has marginal distribution $\E\xi_0$.
\end{lemma}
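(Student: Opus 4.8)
The plan is to take unconditional expectations in the compatibility equation~\eqref{eq:weakly-conjugate-model} and then invoke stationarity. Fix a Borel set $B\subseteq\mathbb{R}$ and a time $\tau\geq 0$, and let $t=\floor{\tau}$, so that $\tau\in[t,t+1)$. Applying the tower property of conditional expectation to~\eqref{eq:weakly-conjugate-model} gives
\[
	\Prob\left[X_\tau\in B\right]=\E\left[\Prob\left[X_\tau\in B\,\vert\,\xi_0,\xi_1,\dots\right]\right]=\E\left[\xi_t(B)\right].
\]
Here the map $\omega\mapsto\xi_t(\omega)(B)$ is measurable because, by the very definition of an $M_1(\mathbb{R})$-valued random element, $\xi_t$ is measurable with respect to the $\sigma$-algebra on $M_1(\mathbb{R})$ generated by the evaluation maps $\mu\mapsto\mu(B)$, $B$ Borel (see Appendix~\ref{sec:bit-theory}); hence the rightmost expectation is well defined and lies in $[0,1]$.

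Next, Assumption~S ensures that $\xi_t$ and $\xi_0$ have the same law as $M_1(\mathbb{R})$-valued random elements, so $\E\left[\xi_t(B)\right]=\E\left[\xi_0(B)\right]$; in particular this quantity does not depend on $\tau$ nor on $t$. It remains only to identify the set function $B\mapsto\E\left[\xi_0(B)\right]$ with the barycenter $\E\xi_0$. By definition (again, Appendix~\ref{sec:bit-theory}), $\E\xi_0$ is the unique Borel probability measure on $\mathbb{R}$ characterized by $(\E\xi_0)(B)=\E\left[\xi_0(B)\right]$ for every Borel set $B$, equivalently by $\int f\,d(\E\xi_0)=\E\int f\,d\xi_0$ for every bounded measurable $f$; that $B\mapsto\E\left[\xi_0(B)\right]$ is genuinely a probability measure follows from $\E\left[\xi_0(\mathbb{R})\right]=1$ together with countable additivity, which is an instance of monotone convergence. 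Chaining the above displays yields $\Prob\left[X_\tau\in B\right]=(\E\xi_0)(B)$ for every Borel $B$, which is precisely the claim.

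There is no substantial obstacle in this argument: it is essentially a matter of unwinding definitions. The only points deserving a moment's attention are the measurability of $\omega\mapsto\xi_t(\omega)(B)$ and the identification of the ``expected measure'' $B\mapsto\E\left[\xi_0(B)\right]$ with the barycenter $\E\xi_0$; both are resolved by the conventions fixed in Appendix~\ref{sec:bit-theory}, and stationarity enters only through the elementary observation that equal laws force $\E\left[\xi_t(B)\right]=\E\left[\xi_0(B)\right]$.
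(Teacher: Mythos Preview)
Your proof is correct and follows exactly the same route as the paper's: tower property applied to~\eqref{eq:weakly-conjugate-model}, then Assumption~S, then the identification $(\E\xi_0)(B)=\E[\xi_0(B)]$ from Lemma~\ref{thm:random-measure-basics}. The only nuance is that in the paper the barycenter is \emph{defined} via integrals of continuous bounded functions, and the identity $(\E\xi_0)(B)=\E[\xi_0(B)]$ is then a consequence (Lemma~\ref{thm:random-measure-basics}\ref{thm:random-measure-basics-item-ii}) rather than the definition itself---a cosmetic point that does not affect the validity of your argument.
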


In what follows, it will be convenient to define the $\sigma$--field $\Xi := \sigma\left(\xi_0,\xi_1,\dots\right)$. Let us say that a conjugate process $\left(\xi_t,\,X_\tau\right)$ is \emph{cyclic--independent} if, conditional on $\Xi$, the stochastic processes $\left(X_\tau:\,\tau\in\left[0,1\right)\right)$, $\left(X_\tau:\,\tau\in\left[1,2\right)\right)$, $\dots$, $\left(X_\tau:\,\tau\in\left[t,t+1\right)\right)$, $\dots$ form and independent sequence\footnote{The concept is easily understood, but expressing it in terms of the finite dimensional distributions is a tedious task.}. In particular, cyclic independence implies that the random variables $X_{\tau_1}$ and $X_{\tau_2}$ are conditionally independent whenever $\floor{\tau_1}\neq\floor{\tau_2}$, that is,
\begin{equation}\label{eq:cyclic-independence}
\Prob\left(X_{\tau_1}\in B_1,\,X_{\tau_2}\in B_2\,\vert\,\Xi\right) = \xi_{t_1}\left(B_1\right)\xi_{t_2}\left(B_2\right)
\end{equation}
for Borel sets $B_j$ whenever $\tau_j\in\left[t_j,\,t_j+1\right)$ and $t_1\neq t_2$.

The concept of cyclic independence may appear restrictive at a first glance; for instance, one could assume it implies that the random variables $X_{\tau_1}$ and $X_{\tau_2}$ are also \emph{unconditionally} independent when $\floor{\tau_1}\neq\floor{\tau_2}$. Fortunately, this is not the case: as is easily seen from the above identity, the random variables $X_{\tau_1}$ and $X_{\tau_2}$ will be unconditionally independent if and only if $\E \xi_{t_1}\left(B_1\right)\xi_{t_2}\left(B_2\right) = \E \xi_{t_1}\left(B_1\right)\E\xi_{t_2}\left(B_2\right)$ for all Borel sets $B_1, B_2$.

A first interesting property of cyclic--independent conjugate processes is that an ergodic--like property of $\left(\xi_t\right)$ is inherited by $\left(X_\tau\right)$. This is the content of Theorem~\ref{thm:WLLN}.

\begin{theorem}\label{thm:WLLN}
Let $\left(\xi_t,\,X_\tau\right)$ be a {cyclic--independent} conjugate process. Assume $\left(\xi_t\right)$ is ergodic, in the sense that
\begin{equation*}
\lim_{n\rightarrow\infty}\frac{1}{n}\sum_{t=0}^{n-1}\xi_t = \E\xi_0\qquad\mbox{almost surely}.
\end{equation*}
Then, for any sequence $\left(\tau_i\right)$ with $\tau_i\in\left[i,\,i+1\right)$ and any continuous bounded function $f\colon\mathbb{R}\rightarrow\mathbb{R}$, it holds that
\begin{equation}\label{thm:LLN-basic}
\lim_{n\rightarrow\infty}\frac{1}{n}\sum_{i=0}^{n-1}f\circ X_{\tau_i} = \E f\circ X_0
\end{equation}
in probability.
\end{theorem}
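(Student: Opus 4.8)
The plan is to split the empirical average into a term that vanishes by a conditional second--moment estimate and a term that vanishes by the ergodicity hypothesis. For each $i$ put $\mu_i:=\int f\,d\xi_i$; since $\tau_i\in[i,i+1)$, the compatibility condition~\eqref{eq:weakly-conjugate-model} says that the conditional distribution of $X_{\tau_i}$ given $\Xi$ is $\xi_i$, so that $\E[f\circ X_{\tau_i}\mid\Xi]=\mu_i$. Using $\E f\circ X_0=\int f\,d(\E\xi_0)$ (Lemma~\ref{thm:X_tau-unconditional-distribution}), I would decompose
\begin{equation*}
\frac1n\sum_{i=0}^{n-1}f\circ X_{\tau_i}-\E f\circ X_0
=\underbrace{\frac1n\sum_{i=0}^{n-1}\bigl(f\circ X_{\tau_i}-\mu_i\bigr)}_{=:A_n}
+\underbrace{\Bigl(\frac1n\sum_{i=0}^{n-1}\mu_i-\int f\,d(\E\xi_0)\Bigr)}_{=:B_n},
\end{equation*}
and show that $A_n\to0$ in $L^2$ and $B_n\to0$ almost surely; each gives convergence in probability, and adding them yields the claim.

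For $B_n$, linearity of the integral gives $\frac1n\sum_{i=0}^{n-1}\mu_i=\int f\,d\bigl(\frac1n\sum_{i=0}^{n-1}\xi_i\bigr)$. By hypothesis $\frac1n\sum_{i=0}^{n-1}\xi_i\to\E\xi_0$ almost surely in $M_1(\mathbb{R})$ equipped with the topology of weak convergence (cf.\ Appendix~\ref{sec:bit-theory}), so, $f$ being bounded and continuous, the definition of weak convergence gives $\int f\,d\bigl(\frac1n\sum_{i=0}^{n-1}\xi_i\bigr)\to\int f\,d(\E\xi_0)$ almost surely; that is, $B_n\to0$ a.s.

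For $A_n$, set $Y_i:=f\circ X_{\tau_i}-\mu_i$, so that $\E[Y_i\mid\Xi]=0$ and $\abs{Y_i}\le2\norm{f}_\infty$. Since the integers $\lfloor\tau_i\rfloor=i$ are pairwise distinct, cyclic independence in the form~\eqref{eq:cyclic-independence} (together with its extension to finitely many cycles) implies that the variables $(Y_i)$ are independent conditionally on $\Xi$; hence $\E[Y_iY_j\mid\Xi]=0$ for $i\ne j$, and
\begin{equation*}
\E\bigl[A_n^2\mid\Xi\bigr]=\frac1{n^2}\sum_{i=0}^{n-1}\E\bigl[Y_i^2\mid\Xi\bigr]
\le\frac1{n^2}\sum_{i=0}^{n-1}\E\bigl[(f\circ X_{\tau_i})^2\mid\Xi\bigr]\le\frac{\norm{f}_\infty^2}{n}.
\end{equation*}
Taking expectations, $\E[A_n^2]\le\norm{f}_\infty^2/n\to0$, so $A_n\to0$ in $L^2$ and thus in probability. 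Combining the two parts, $A_n+B_n\to0$ in probability, as required.

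The one place requiring care is the conditional--independence step for $A_n$: one must check that cyclic independence, which is stated for the cycle--wise restrictions of $(X_\tau)$, does imply that the single variables $X_{\tau_0},X_{\tau_1},\dots$ — one per distinct cycle — are mutually independent given $\Xi$, with each having conditional law $\xi_i$. Everything else is a routine $L^2$--bound and an appeal to the definition of weak convergence; note in particular that continuity of $f$ is used only for $B_n$, while boundedness is what makes the conditional--variance bound for $A_n$ uniform in $i$.
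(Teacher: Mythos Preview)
Your proof is correct and follows the same decomposition as the paper's: both split the average into the centered piece $A_n=\frac{1}{n}\sum Y_i$ with $Y_i=f\circ X_{\tau_i}-\int f\,d\xi_i$, handled via conditional independence given $\Xi$, and the ergodic piece $B_n$, handled by the hypothesis and the definition of weak convergence. The only difference is in how $A_n\to 0$ is established: the paper invokes a conditional law of large numbers for the sequence $(Y_i\mid\Xi)$ and then passes to the unconditional statement by dominated convergence, whereas you give a direct $L^2$ bound via $\E[A_n^2\mid\Xi]\le\norm{f}_\infty^2/n$ from conditional orthogonality. Your route is slightly more self--contained (no need to name which LLN applies to a non--identically distributed sequence), while the paper's route would in principle give almost--sure convergence of $A_n$; for the stated conclusion in probability, either suffices.
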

With a little additional effort, Theorem~\ref{thm:WLLN} can be adapted to include other situations of interest -- for example, when $f$ is the identity function, when the intra--cycle sample sizes are larger than $1$, etc. Theorem~\ref{thm:WLLN} is important because it establishes a connection between the asymptotic behavior of time averages of the latent process and of the observable process. It is also important for in its proof a general method is presented for transferring ergodicity of $\left(\xi_t\right)$ to $\left(X_\tau\right)$. Variations of this idea are employed in many of the proofs that we give here. We now turn our attention to a characterization of the random measures $\xi_t$ by a spectral representation in a suitable Hilbert space.

\subsection{Hilbert space embedding and spectral representation}\label{sec:Hilbert-embedding}

A key feature of considering the random probability measures $\xi_t$ is that they can be embedded in a separable Hilbert space, in which they are characterized by a specific spectral representation. This spectral representation is described by a sequence of $\ell^2$ random elements and some functional parameters which are the eigenfunctions of a covariance operator.

Let $\mu$ be a finite Borel measure on $\mathbb{R}$. Let us say that $\mu$ is a \emph{diffraction} of the process $\left(\xi_t\right)$ if it is equivalent to Lebesgue measure on some interval containing the support of the measure $\E\xi_0$. The terminology is justified in Theorem~\ref{thm:spectral-representation-F_t}. Denote by $\langle\cdot,\cdot\rangle_\mu$ and $\norm{\cdot}_\mu$ the usual inner-product and norm in $L^2\left(\mu\right)$, respectively. Now define, for $x\in\mathbb{R}$,
\begin{equation}
F_t\left(x\right) := \xi_t\left(-\infty,x\right].
\end{equation}
Similarly, set
\begin{equation}
\E F_0\left(x\right) := \E\xi_0\left(-\infty,x\right].\label{eq:mean-function}
\end{equation}
By Lemma~\ref{thm:random-measure-basics} in the Appendix, $F_t\left(x\right)$ is a real random variable whose expectation equals $\E F_0\left(x\right)$. A bit more can be said.

\begin{lemma}\label{thm:hilbert-embedding-F_t}
If $\mu$ is a diffraction of $\left(\xi_t\right)$, then $\left(F_t:\,t=0,1,\dots\right)$ is a stationary sequence of random elements in $L^2\left(\mu\right)$. Moreover, the Bochner expectation of $F_t$ is equal to $\E F_0$, for all $t$.
\end{lemma}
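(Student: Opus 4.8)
The plan is to verify three things in turn: (i) that $F_t$ is almost surely an element of $L^2(\mu)$; (ii) that the sequence $(F_t)$ is stationary as a sequence of $L^2(\mu)$-valued random elements; and (iii) that the Bochner integral $\E F_t$ exists and equals $\E F_0$. First I would address (i). Since $F_t(x) = \xi_t(-\infty,x] \in [0,1]$ for every $x$, the function $F_t$ is bounded by $1$, hence $\norm{F_t}_\mu^2 = \int F_t^2 \, d\mu \leq \mu(\mathbb{R}) < \infty$ because $\mu$ is a finite measure. Measurability of $x \mapsto F_t(x)$ as a monotone (càdlàg) function is standard, so $F_t \in L^2(\mu)$ surely; the joint measurability needed to call $F_t$ a \emph{random element} of $L^2(\mu)$ follows because $L^2(\mu)$ is separable and $\omega \mapsto F_t(\omega)$ is a pointwise (hence, via dominated convergence with bound $1$, an $L^2(\mu)$-norm) limit of measurable simple-type constructions; alternatively one invokes the Pettis measurability theorem together with the fact that $\omega\mapsto \langle F_t(\omega), g\rangle_\mu$ is measurable for each $g \in L^2(\mu)$, which reduces to measurability of $\omega \mapsto \int g(x)\, \xi_t(\omega)(-\infty,x]\,d\mu(x)$ --- and this follows from Lemma~\ref{thm:random-measure-basics} and Fubini.

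For (ii), stationarity, I would argue that $F_t$ is the image of $\xi_t$ under a fixed (deterministic) measurable map $\Phi\colon M_1(\mathbb{R}) \to L^2(\mu)$, namely $\Phi(\nu)(x) := \nu(-\infty,x]$. Once $\Phi$ is shown to be well-defined (range in $L^2(\mu)$, by the boundedness argument above) and measurable with respect to the relevant Borel $\sigma$-fields, stationarity of $(\xi_t)$ under Assumption~S transfers immediately to stationarity of $(F_t) = (\Phi(\xi_t))$, since applying a fixed measurable map commutes with the shift and preserves equality of finite-dimensional joint laws. Measurability of $\Phi$ again follows from a Pettis-type argument: for fixed $g$, $\nu \mapsto \langle \Phi(\nu), g\rangle_\mu = \int g(x)\,\nu(-\infty,x]\,d\mu(x)$ is measurable on $M_1(\mathbb{R})$ (it is a bounded-kernel integral of the evaluation maps $\nu \mapsto \nu(-\infty,x]$, which generate the Borel structure on $M_1(\mathbb{R})$), and $L^2(\mu)$ is separable.

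For (iii), I would note that $\E\norm{F_t}_\mu \leq \sqrt{\mu(\mathbb{R})} < \infty$ by (i), so $F_t$ is Bochner integrable; its Bochner expectation is characterized by $\langle \E F_t, g\rangle_\mu = \E\langle F_t, g\rangle_\mu$ for all $g \in L^2(\mu)$. Using Fubini (justified by the bound $|g(x)F_t(x)| \le |g(x)|$, integrable against $\mu$) gives $\E\langle F_t, g\rangle_\mu = \int g(x)\, \E[\xi_t(-\infty,x]]\, d\mu(x) = \int g(x)\,\E F_0(x)\,d\mu(x) = \langle \E F_0, g\rangle_\mu$, where the middle equality uses the already-noted fact (Lemma~\ref{thm:random-measure-basics}, stationarity) that $\E F_t(x) = \E F_0(x)$ pointwise. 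Since $g$ was arbitrary, $\E F_t = \E F_0$ in $L^2(\mu)$. I expect the only genuinely delicate point to be the measurability/well-definedness of the embedding map $\Phi$ into the \emph{separable} Hilbert space $L^2(\mu)$ --- in particular confirming separability of $L^2(\mu)$ here (which holds since $\mu$ is a finite Borel measure on $\mathbb{R}$, a second-countable space) and carefully invoking Pettis measurability --- whereas the integrability and the identification of the mean are routine once Fubini is licensed by the uniform bound $F_t \le 1$.
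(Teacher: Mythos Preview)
Your proof is correct and follows the same outline as the paper: realize $F_t$ as $\Phi(\xi_t)$ for the fixed map $\Phi\colon M_1(\mathbb{R})\to L^2(\mu)$, $\Phi(\nu)(x)=\nu(-\infty,x]$, check that $\Phi$ is well-defined and measurable, then transfer stationarity from $(\xi_t)$ and identify the Bochner mean pointwise via Fubini. The one substantive difference is in the measurability step: you invoke Pettis measurability (weak measurability plus separability of $L^2(\mu)$), whereas the paper, in Lemma~\ref{thm:hilbert-embedding}, shows that $\Phi$ is in fact \emph{continuous} from $M_1(\mathbb{R})$ with the weak* topology into $L^2(\mu)$, via the Portmanteau Theorem and dominated convergence (the discontinuity set of $x\mapsto\nu(-\infty,x]$ is countable, hence $\mu$-null since $\mu$ is absolutely continuous with respect to Lebesgue measure). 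Your route suffices for the present lemma, but the paper's continuity conclusion is stronger and is reused later: the proof of Theorem~\ref{thm:LLN-for-Fhat} applies the Continuous Mapping Theorem to $\Phi$ to pass from $n^{-1}\sum_t \xi_t \to \E\xi_0$ in $M_1(\mathbb{R})$ to $n^{-1}\sum_t F_t \to \E F_0$ in $L^2(\mu)$, which mere measurability would not deliver.
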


As a random element in a separable Hilbert space, each $\xi_t$ most certainly admits a spectral representation as a series expansion with random scalar coefficients. We shall be interested in one such particular expansion. In that direction, for each $k=0,1,\dots$, let $C_k$ denote the $k$--th lag autocovariance function of $\left(F_t\right)$, that is, for $x,y\in\mathbb{R}$, put
\begin{equation}
C_k\left(x, y\right) := \Cov\left(F_0\left(x\right),\,F_k\left(y\right)\right), \label{eq:C_k}
\end{equation}
and introduce the operators $C_k^\mu$ acting on $L^2\left(\mu\right)$ defined by
\begin{equation}
C_k^\mu f\left(x\right) := \int C_k\left(x,y\right)f\left(y\right)\,\mu\left(dy\right).
\end{equation}

Recall that $C_0^\mu$ is positive and trace-class. The importance of the Hilbert space embedding, Lemma \ref{thm:hilbert-embedding-F_t}, lies in the spectral representation of $\left(F_t\right)$ as stated in Theorem~\ref{thm:spectral-representation-F_t}. This representation is related -- albeit in a slightly different guise -- to the well known Karhunen--Loève expansion (of each $F_t$ seen as a process $x\mapsto F_t\left(x\right)$). Before stating the theorem, let us establish some additional notation. In what follows, we shall write
\begin{equation*}\label{eq:DIMENSION}
	d := \rank{\left(C_0^\mu\right)} \leq\infty,
\end{equation*}
and we let $\left(\lambda_j^\mu:\,j\in \mathbb{N}\right)$ denote the non--increasing sequence of eigenvalues of $C_0^\mu$ (with repetitions, if any). To ensure that the sequence $\left(\lambda_j^\mu\right)$ is well defined we adopt the convention that it contains zeros if and only if $C_0^\mu$ is finite--rank. We define the eigenfunctions of $C_0^\mu$ via the equations
\begin{equation*}
	C_0^\mu \varphi_j^\mu = \lambda_j^\mu\varphi_j^\mu,\qquad j\in \mathbb{N},
\end{equation*}
and assume that the set $\left\{\varphi_j^\mu:\,j\in \mathbb{N}\right\}$ is orthonormal in $L^2\left(\mu\right)$.

\begin{theorem}\label{thm:spectral-representation-F_t}
Define the scalar random variables $Z_{tj}^\mu$ via
	\begin{equation}\label{eq:Z_tj-definition}
		Z_{tj}^\mu := \langle F_t - \E F_0, \varphi_j^\mu\rangle_\mu,\qquad j\in \mathbb{N},\quad t=0,1,\dots
	\end{equation}
	If $\mu$ is a diffraction of $\left(\xi_t\right)$, then
	\begin{enumerate}[label={(\textit{\roman*}})]
		\item $\E Z_{tj}^\mu = 0$ for all $t$ and all $j$;
		\item $\Var\left(Z_{tj}^{\mu}\right) = \lambda_j^\mu$ for all $t$ and all $j$;
		\item For all $t$, $\Cov\left(Z_{ti}^\mu, Z_{tj}^\mu\right) = 0$ whenever $i\neq j$;
		\item for each $j$, the function $\varphi_j^\mu$ is bounded and càdlàg.
	\end{enumerate}
Moreover, for each $t$ the expansion
	\begin{equation}\label{eq:F_t-spectral-representation-C_0}
		F_t = \E F_0 + \sum_{j = 1}^\infty Z_{tj}^\mu\varphi_j^\mu
	\end{equation}
holds in $L^2\left(\mu\right)$, almost surely.
\end{theorem}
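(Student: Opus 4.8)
The plan is to treat $G_t := F_t - \E F_0$ as a uniformly bounded, mean--zero random element of $L^2(\mu)$ whose covariance operator is exactly the integral operator $C_0^\mu$, and then to read off (\textit{i})--(\textit{iii}) and the expansion \eqref{eq:F_t-spectral-representation-C_0} from the standard spectral theory of covariance operators in a separable Hilbert space; the pathwise structure of the $F_t$ will be needed only for (\textit{iv}). Since $0\le F_t(x)\le 1$ we have $\norm{F_t}_\mu\le\sqrt{\mu(\mathbb{R})}$ deterministically, so all second moments exist, and by Lemma~\ref{thm:hilbert-embedding-F_t} the Bochner mean of $F_t$ is $\E F_0$, whence $\E G_t = 0$.

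First I would identify the covariance operator. Because $f\mapsto\langle f,\varphi_j^\mu\rangle_\mu$ is a bounded linear functional and Bochner expectation commutes with bounded linear functionals, $\E Z_{tj}^\mu = \langle\E G_t,\varphi_j^\mu\rangle_\mu = 0$, which is (\textit{i}). For $f,g\in L^2(\mu)$, Fubini's theorem — legitimate since $\abs{(F_0(x)-\E F_0(x))(F_0(y)-\E F_0(y))f(y)g(x)}\le\abs{f(y)}\abs{g(x)}$ and $\mu$ is finite — yields
\begin{equation*}
\langle C_0^\mu f, g\rangle_\mu = \iint C_0(x,y)f(y)g(x)\,\mu(dy)\,\mu(dx) = \E\bigl[\langle G_0,f\rangle_\mu\,\langle G_0,g\rangle_\mu\bigr],
\end{equation*}
so $C_0^\mu$ is the covariance operator of $F_0$ — hence, by stationarity (Lemma~\ref{thm:hilbert-embedding-F_t}), of every $F_t$; this is also the computation behind the ``Recall'' that $C_0^\mu$ is positive and trace--class. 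Specializing to $f=\varphi_i^\mu$, $g=\varphi_j^\mu$ and using $C_0^\mu\varphi_i^\mu=\lambda_i^\mu\varphi_i^\mu$ gives $\Cov(Z_{ti}^\mu,Z_{tj}^\mu)=\langle C_0^\mu\varphi_i^\mu,\varphi_j^\mu\rangle_\mu=\lambda_i^\mu\langle\varphi_i^\mu,\varphi_j^\mu\rangle_\mu$, which equals $\lambda_j^\mu$ for $i=j$ and $0$ otherwise, i.e.\ (\textit{ii}) and (\textit{iii}).

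For the expansion I would show $G_t$ lives almost surely in $\overline{\Ran C_0^\mu}$: for $\varphi\in\Null C_0^\mu$ one has $\E\bigl[\langle G_t,\varphi\rangle_\mu^2\bigr]=\langle C_0^\mu\varphi,\varphi\rangle_\mu=0$, so letting $\varphi$ range over a countable dense subset of $\Null C_0^\mu$ shows $G_t\perp\Null C_0^\mu=(\overline{\Ran C_0^\mu})^\perp$ a.s. Since, by the spectral theorem for the compact positive operator $C_0^\mu$, the family $\{\varphi_j^\mu:\lambda_j^\mu>0\}$ is an orthonormal basis of $\overline{\Ran C_0^\mu}$, the abstract Fourier expansion of $G_t$ in this basis converges in $L^2(\mu)$ a.s.\ and equals $\sum_{j\ge1}Z_{tj}^\mu\varphi_j^\mu$ — the terms with $\lambda_j^\mu=0$ contributing nothing, because then $Z_{tj}^\mu=0$ a.s. — which is exactly \eqref{eq:F_t-spectral-representation-C_0}.

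The crux, and the only place where the specific nature of the problem is used, is (\textit{iv}). Each realization $F_0(\cdot)$ is a distribution function: nondecreasing, bounded by $1$, càdlàg; hence $x\mapsto C_0(x,y)=\E[F_0(x)F_0(y)]-\E F_0(x)\,\E F_0(y)$ is bounded by $1$ and, by dominated convergence (constant dominating function) along $x\downarrow x_0$ and $x\uparrow x_0$, is itself càdlàg. For $\lambda_j^\mu>0$ I would select the version $\varphi_j^\mu(x):=(\lambda_j^\mu)^{-1}\int C_0(x,y)\varphi_j^\mu(y)\,\mu(dy)$, which agrees with the eigenfunction in $L^2(\mu)$, is bounded by $(\lambda_j^\mu)^{-1}\norm{\varphi_j^\mu}_{L^1(\mu)}\le(\lambda_j^\mu)^{-1}\sqrt{\mu(\mathbb{R})}$ (Cauchy--Schwarz, $\mu$ finite), and is càdlàg by one further dominated convergence step (dominating function $\abs{\varphi_j^\mu}\in L^1(\mu)$). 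When $d<\infty$ the finitely many remaining eigenfunctions can be chosen inside $\Null C_0^\mu=\operatorname{span}\{\varphi_1^\mu,\dots,\varphi_d^\mu\}^\perp$ by Gram--Schmidt applied to $\varphi_1^\mu,\dots,\varphi_d^\mu$ followed by a sequence of bounded càdlàg functions with dense linear span in $L^2(\mu)$, which produces an orthonormal basis of bounded càdlàg functions; in any case these eigenfunctions are irrelevant to \eqref{eq:F_t-spectral-representation-C_0}. I expect this regularity transfer — from the pathwise behavior of the random distribution functions $F_t$, to the kernel $C_0$, and then through the finite measure $\mu$ to the eigenfunctions — to be the main obstacle, while the book-keeping of $\Null C_0^\mu$ in the expansion and in (\textit{iv}) is a secondary nuisance.
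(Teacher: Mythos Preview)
Your proposal is correct and follows essentially the same route as the paper: items (\textit{i})--(\textit{iii}) and the expansion are obtained from the standard covariance--operator spectral theory (the paper packages this as Theorem~\ref{thm:xi-orthogonal-ker} and Corollary~\ref{thm:hilbert-representations}, whose content is precisely your argument that $G_t\perp\Null C_0^\mu$ a.s.\ plus Fourier expansion), and item (\textit{iv}) is obtained by feeding the eigenvalue equation $\lambda_j\varphi_j(x)=\int C_0(x,y)\varphi_j(y)\,\mu(dy)$ through dominated convergence, using that $\sup_{x,y}\abs{C_0(x,y)}<\infty$ and that each realization $F_0(\cdot)$ is c\`adl\`ag. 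Your two--step DCT (first show $x\mapsto C_0(x,y)$ is c\`adl\`ag, then pass to $\varphi_j$) is a clean variant of the paper's single combined step, and your explicit treatment of the zero--eigenvalue eigenfunctions when $d<\infty$ is a point the paper's proof leaves implicit.
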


\begin{remark}
The reader should contrast Theorem~\ref{thm:spectral-representation-F_t}, which states the almost sure convergence
$
	\lim_{N\rightarrow\infty}\int \big\{F_t\left(x\right) - \E F_0\left(x\right) -\sum_{j=1}^N Z_{tj}^\mu\varphi_j\left(x\right)\big\}^2\,\mu\left(dx\right) = 0,
$
with the Karhunen--Lo\`{e}ve Theorem, which states that
\begin{equation*}
	\lim_{N\rightarrow \infty}\sup_{x\in K}\E\big\{ \big[F_t\left(x\right) - \E F_0\left(x\right) -\sum_{j=1}^N Z_{tj}^\mu\varphi_j\left(x\right)\big]^2 \big\} = 0
\end{equation*}
for compact subsets $K\subset\mathbb{R}$. The Karhunen--Lo\`{e}ve Theorem also relies on some assumptions that may not hold in our context, namely \begin{enumerate*}[label={\textit{(\roman*)}}]\item that the map $\left(\omega,x\right)\mapsto F_t^\omega\left(x\right)$ is measurable, and \item that the covariance function $C_0$ is continuous.\end{enumerate*} Although the latter hypotheses can be relaxed, in the framework of random elements in Hilbert space Theorem~\ref{thm:spectral-representation-F_t} is more natural. It is also important to notice that the embedding of a random probability measure into the Hilbert space $L^2\left(\mu\right)$ induces random coefficients which are uniformly bounded and mutually dependent. Indeed, by the Pythagorean identity, we have that
	\begin{equation*}
	\sum_j \abs{Z_{tj}}^2 = \norm{F_t - \E F_0}^2\leq \mu\left(\mathbb{R}\right).
	\end{equation*}
Notice however that any dependence between the random variables, say, $Z_{t1}^\mu$ and $Z_{t2}^\mu$ is necessarily nonlinear since they are uncorrelated.
\end{remark}

\bigskip

Although the choice of the measure $\mu$ may seem quite arbitrary at first, the following proposition shows that, as long as one takes $\mu$ to be a diffraction of $\left(\xi_t\right)$, such choice does not matter. This implies, in particular, that the representation~\eqref{eq:F_t-spectral-representation-C_0} completely characterizes the process $\left(\xi_t\right)$.

\begin{proposition}\label{thm:mu-nu-equivalence}
	Let $\mu$ and $\nu$ be any two diffractions of $\left(\xi_t\right)$. Then $\rank\left(C_0^\mu\right) = \rank\left(C_0^\nu\right)$, and the $L^2\left(\mu\right)$--closed linear span of $\left\{\varphi_j^\nu:\,j \in \mathbb{N}\right\}$ and of $\left\{\varphi_j^\mu:\,j \in \mathbb{N}\right\}$ coincide.
\end{proposition}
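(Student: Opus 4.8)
The plan is to show that both diffractions $\mu$ and $\nu$ give rise to the same ``intrinsic'' object, namely the closed linear span of the centered functions $F_t - \E F_0$ together with the covariance structure, and then read off rank equality and span equality from that. First I would record the key geometric fact underlying everything: since $\mu$ and $\nu$ are both equivalent to Lebesgue measure on an interval containing $\supp(\E\xi_0)$, the function $g := \tfrac{d\mu}{d\nu}$ is well-defined, strictly positive, and (on the relevant interval) bounded away from $0$ and $\infty$ in a suitable sense; more importantly, $h \mapsto h$ maps $L^2(\nu)$ into $L^2(\mu)$ and vice versa on the subspace of functions supported on that interval, and the $F_t - \E F_0$ all live there. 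So the centered CDF increments sit inside both spaces and the operators $C_0^\mu$, $C_0^\nu$ act on essentially the same vectors.

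Next I would identify $\overline{\Ran}(C_0^\mu)$. The standard Hilbert–Schmidt / Karhunen–Loève fact is that for a mean-zero second-order random element $Y$ in a separable Hilbert space $H$ with covariance operator $C$, one has $\overline{\Ran}(C) = \overline{\mathrm{span}}\{\, \text{support of the law of } Y\,\}$ in $H$ --- more precisely $\overline{\Ran}(C)$ is the smallest closed subspace containing $Y$ almost surely, equivalently the closed span of $\{\E[\langle Y, a\rangle_H\, Y]: a \in H\}$. Applying this with $H = L^2(\mu)$ and $Y = F_0 - \E F_0$ (using stationarity so the choice of index is irrelevant, and using that $\{\varphi_j^\mu\}$ is an orthonormal eigenbasis of $C_0^\mu$ so $\overline{\mathrm{span}}\{\varphi_j^\mu\} = \overline{\Ran}(C_0^\mu)$), the span of the eigenfunctions in $L^2(\mu)$ is precisely the closed linear span of the random functions $F_t - \E F_0$ taken in the $L^2(\mu)$ topology. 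I would do the same computation for $\nu$. Then the proposition reduces to: (a) these two closed spans, one formed in $L^2(\mu)$ and one in $L^2(\nu)$, coincide as sets of functions; and (b) the (Hilbert) dimension of that common space is the same whichever ambient inner product one uses --- but dimension of a closed subspace is just its cardinality of any orthonormal (equivalently, Hamel-in-the-finite-case) basis, and a change of equivalent measure is a bounded invertible linear map between the two $L^2$ spaces that carries one subspace onto the other, hence preserves the rank. That gives $\rank(C_0^\mu) = \rank(C_0^\nu)$.

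The main obstacle is step (a): showing that the $L^2(\mu)$-closure and the $L^2(\nu)$-closure of the same linear set of functions $\mathrm{span}\{F_t^\omega - \E F_0 : \omega, t\}$ (or of $\{\E[Z_{0j}\,(F_0 - \E F_0)]\}$, whichever presentation is cleaner) agree. This is not automatic for arbitrary equivalent measures since the two norms need not be comparable globally; the point is that all the relevant functions are uniformly bounded (each $F_t$ is a CDF, so $0 \le F_t \le 1$, and the càdlàg boundedness of the $\varphi_j^\mu$ from Theorem~\ref{thm:spectral-representation-F_t}(\textit{iv}) is available) and supported on a fixed bounded interval $I \supseteq \supp(\E\xi_0)$, on which $\mu$ and $\nu$ are mutually absolutely continuous with respect to Lebesgue. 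On such a set, $L^2$-convergence of a uniformly bounded sequence with respect to $\mu$ and with respect to $\nu$ are both implied by (and, along subsequences, imply) convergence in Lebesgue measure restricted to $I$, via dominated convergence. So I would argue: a sequence in the linear span that is $\mu$-Cauchy has a subsequence converging $\mu$-a.e., hence Leb-a.e. on $I$, hence (bounded convergence) $\nu$-convergent, and the limit is the same function; this shows the $\mu$-closure is contained in the $\nu$-closure, and by symmetry they are equal. Once (a) is in hand, combine with the Karhunen–Loève identification of each span with its $C_0$-range and with the change-of-measure isomorphism to conclude both assertions. I would relegate the routine measure-theoretic bookkeeping (that $F_t - \E F_0 \in L^2(\mu)$ and the interchange of expectation and integral needed to identify $\overline{\Ran}(C_0^\mu)$) to citations of Lemma~\ref{thm:hilbert-embedding-F_t} and the appendix.
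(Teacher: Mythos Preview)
Your strategy---identify $\overline{\Ran}(C_0^\mu)$ with the $L^2(\mu)$-closed linear span of the random elements $F_t-\E F_0$, do the same for $\nu$, and then argue that this span is insensitive to the choice of diffraction---is a reasonable alternative to the paper's route, but step~(a) has a genuine gap. You claim that a $\mu$-Cauchy sequence drawn from $\mathrm{span}\{F_t^\omega-\E F_0\}$ has a subsequence converging $\mu$-a.e., hence $\nu$-a.e., and then invoke dominated convergence ``since all the relevant functions are uniformly bounded.'' But uniform boundedness holds only for the \emph{generators}: each $F_t^\omega-\E F_0$ is bounded by $1$, yet an arbitrary finite linear combination $\sum_k c_k\,(F_{t_k}^{\omega_k}-\E F_0)$ is bounded only by $\sum_k|c_k|$, which is uncontrolled. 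A $\mu$-Cauchy approximating sequence need not be uniformly bounded, so the bounded-convergence step fails. (A minor additional slip: the interval $I$ need not be bounded, since nothing in the definition of a diffraction forces $\supp(\E\xi_0)$ to be compact.)

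The paper sidesteps this by working at the level of the kernel rather than the span. It uses a Mercer-type result (K\"{o}nig, Theorem~3.a.1) to obtain that $g_n(x,y):=C_0(x,y)-\sum_{j=1}^n\lambda_j^\mu\varphi_j^\mu(x)\varphi_j^\mu(y)$ tends to zero \emph{uniformly} on a set of full $\mu\otimes\mu$-measure. Uniform convergence is exactly the ingredient that survives a change of integrating measure: combined with the Minkowski integral inequality it yields
\[
\Big\|\lambda_i^\nu\varphi_i^\nu-\sum_{j=1}^n\lambda_j^\mu\langle\varphi_j^\mu,\varphi_i^\nu\rangle_\nu\,\varphi_j^\mu\Big\|_\mu\longrightarrow 0,
\]
which places each $\varphi_i^\nu$ directly in the $L^2(\mu)$-closed span of the $\varphi_j^\mu$. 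Your purely $L^2$ argument cannot manufacture this uniform control; to salvage your approach you would need either to establish that $d\mu/d\nu$ is bounded above and below (which is not assumed), or to produce approximating sequences that are uniformly bounded by construction---neither of which is in the proposal as written.
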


In view of Theorem \ref{thm:spectral-representation-F_t} and Proposition \ref{thm:mu-nu-equivalence}, we shall henceforth suppress the measure $\mu$ from notation, writing for instance $\langle\cdot,\cdot\rangle$ in place of $\langle\cdot,\cdot\rangle_\mu$, etc. In particular, we write
\begin{equation}
F_t = \E F_0 + \sum_{j = 0}^\infty Z_{tj}\varphi_j
\end{equation}
instead of \eqref{eq:F_t-spectral-representation-C_0}. Theorem~\ref{thm:spectral-representation-F_t} and Proposition~\ref{thm:mu-nu-equivalence} say that the latent distribution process $\left(\xi_t\right)$ is characterized, via the $L^2\left(\mu\right)$--embedding, by the \emph{mean function} $\E F_0$, the \emph{functional parameters} $\left\{\varphi_j:\,j\in \mathbb{N}\right\}$ and by the $\ell^2$--valued stochastic process $\left(\boldsymbol{Z}_t:\,t=0,1,\dots\right)$, where $\boldsymbol{Z}_t := \left(Z_{tj}:\,j\in \mathbb{N}\right)$. It is important to notice that whenever $d<\infty$, the process $\left(\boldsymbol{Z}_t\right)$ can be seen as $\mathbb{R}^d$--valued. As we shall argue, in statistical applications one is mainly interested in this scenario, where the dynamic aspects of $\left(\xi_t\right)$ are driven by a finite dimensional vector process. A central inferential objective in this setting is to estimate the dimension $d$, the functional parameters $\E F_0, \varphi_1,\dots,\varphi_d$, and to recover the latent process $\left(\boldsymbol{Z}_t:\,t=0,1,\dots \right)$, based on sampling the process $\left(X_\tau\right)$ only.

\section{Inference}\label{sec:inference}
In this section we present an inference theory in the framework of conjugate processes. First we provide an alternate spectral characterization of the latent distribution process, after which we study the asymptotic properties of proposed estimators.

\subsection{Dynamic spectral representation of the latent process}\label{sec:dyn-repres}
From now on $p$ is some fixed integer with $1\leq p < n$ -- see the remark following Corollary~\ref{thm:finite-rank-equivalence2} for a discussion. Abusing a little on notation, for each $t$ let $X_{it}$, $i=1,\dots,q_t$ denote some observations of the process $\left(X_\tau\right)$ in cycle $t$. We define a \emph{sampling scheme} in terms of the collection $\left\{X_{it}\right\}_{it}$. We are being rather loose in the definition but the meaning should be evident; typically one has $X_{it} = X_{t+k/q_t}$ with $k=0,1,\dots,q_t\!-\!1$ but it is not necessarily so. Heuristically, whatever the generating process may be, we consider that one can only observe the data $\left(X_{it}:\,i=1,\dots,q_t,\,t=1,\dots,n\right)$. The underlying structure that characterizes the process $\left(\xi_t\right)$ -- namely, the expectation $\E F_0$, the eigenfunctions $\left\{\varphi_j:\,j\in \mathbb{N}\right\}$ with their associated eigenspace, and the process $\left(\boldsymbol{Z}_t\right)$ -- ought to be recovered from these data alone.

In this direction, let $\widehat{F}_t$ denote the empirical cumulative distribution function of the observations $X_{1t},X_{2t},\dots,X_{q_t,t}$, that is,
\begin{equation*}
\widehat{F}_t\left(x\right):=\frac{1}{q_t}\sum_{i=1}^{q_t}\mathbb{I}_{\left[X_{it}\leq x\right]}, \qquad x\in\mathbb{R}.
\end{equation*}
It is easily established that $\widehat{F}_t$ defines a random element in $L^2\left(\mu\right)$. Writing
\begin{equation}\label{eq:empirical-df-equals-F-plus-error}
	\widehat{F}_t = F_t + \varepsilon_t,
\end{equation}
where $\varepsilon_t = \widehat{F}_t - F_t$ \emph{by tautology}, we obtain the following properties.
\begin{lemma}\label{thm:epsilon-is-well-behaved}
Let $\left(\xi_t,X_\tau\right)$ be a cyclic--independent conjugate process. Then the following holds.
\begin{enumerate}[label={(\textit{\roman*}})]
	\item $\E\left[\varepsilon_t\left(x\right)\,\vert\,\Xi\right] = 0$ for all $t$ and all $x\in \mathbb{R}$;\label{thm:epsilon-is-well-behaved-item1}
	\item $\E\left[F_t\left(x\right)\varepsilon_{t+k}\left(y\right)\,\vert\,\Xi\right]=0$ for all $t$, all integers $k$ and all $x,y\in \mathbb{R}$;\label{thm:epsilon-is-well-behaved-item2}
	\item $\E\left[\varepsilon_t\left(x\right)\varepsilon_{t+k}\left(y\right)\,\vert\,\Xi\right]=0$ for all $t$ and all $x,y\in \mathbb{R}$ provided $k\neq 0$.\label{thm:epsilon-is-well-behaved-item3}
\end{enumerate}
\end{lemma}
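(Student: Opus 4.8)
The plan is to condition on $\Xi$ throughout and exploit the fact that, given $\Xi$, the observations $X_{1t},\dots,X_{q_t,t}$ in cycle $t$ are i.i.d.\ draws from the (now deterministic) probability measure $\xi_t$, by the compatibility equation~\eqref{eq:weakly-conjugate-model}; moreover, by cyclic independence, the families of observations from distinct cycles are conditionally independent given $\Xi$. The key observation is that $\widehat F_t(x) = \frac1{q_t}\sum_{i=1}^{q_t}\mathbb I_{[X_{it}\le x]}$ is, conditionally on $\Xi$, an average of i.i.d.\ Bernoulli$(\xi_t(-\infty,x]) = F_t(x)$ random variables, so $\E[\widehat F_t(x)\mid\Xi] = F_t(x)$, and hence $\E[\varepsilon_t(x)\mid\Xi] = \E[\widehat F_t(x)\mid\Xi] - F_t(x) = 0$, which is item~\ref{thm:epsilon-is-well-behaved-item1}. (One should be a touch careful: $F_t$ is $\Xi$-measurable, so it pulls out of the conditional expectation; $q_t$ is also treated as $\Xi$-measurable, or at worst as part of the sampling scheme that is fixed — this is the kind of measurability bookkeeping alluded to by the ``we are being rather loose'' remark, and I would state the mild assumption explicitly that the sampling scheme is conditionally independent of the within-cycle draws given $\Xi$.)

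For item~\ref{thm:epsilon-is-well-behaved-item2}, again $F_t(x)$ is $\Xi$-measurable, so $\E[F_t(x)\varepsilon_{t+k}(y)\mid\Xi] = F_t(x)\,\E[\varepsilon_{t+k}(y)\mid\Xi] = F_t(x)\cdot 0 = 0$ by item~\ref{thm:epsilon-is-well-behaved-item1}; this works uniformly in $k$ (including $k=0$) and requires nothing beyond what was just established.

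For item~\ref{thm:epsilon-is-well-behaved-item3} with $k\neq 0$, I would write $\E[\varepsilon_t(x)\varepsilon_{t+k}(y)\mid\Xi]$ and invoke conditional independence across cycles: given $\Xi$, the random vector $(X_{it})_{i\le q_t}$ is independent of $(X_{i,t+k})_{i\le q_{t+k}}$ because $\lfloor\tau\rfloor = t \neq t+k = \lfloor\tau'\rfloor$ for the corresponding sampling times, so $\varepsilon_t(x)$ and $\varepsilon_{t+k}(y)$ are conditionally independent; therefore $\E[\varepsilon_t(x)\varepsilon_{t+k}(y)\mid\Xi] = \E[\varepsilon_t(x)\mid\Xi]\,\E[\varepsilon_{t+k}(y)\mid\Xi] = 0$ by item~\ref{thm:epsilon-is-well-behaved-item1}.

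The main obstacle is not analytic but foundational: making precise that ``conditionally on $\Xi$, the within-cycle observations are i.i.d.\ $\xi_t$ and cross-cycle families are independent'' — i.e.\ that equation~\eqref{eq:weakly-conjugate-model} together with cyclic independence really does yield a regular conditional distribution under which the $X_{it}$ behave as a triangular array of conditionally independent draws with the stated laws. I would handle this by first recording, as a preliminary lemma (or by citing the Appendix material referenced as Lemma~\ref{thm:random-measure-basics}), that a regular conditional distribution of $(X_\tau)$ given $\Xi$ exists and factorizes over cycles with cycle-$t$ marginal law $\xi_t$; once that is in hand, all three items are one-line computations of the type sketched above. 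A secondary, minor subtlety is the role of $\mu$ and the $L^2(\mu)$ framework: the identities are stated pointwise in $x,y$, so no integration against $\mu$ is needed here, but I would remark that they imply the corresponding statements for the $L^2(\mu)$-valued objects by Fubini once one knows $\varepsilon_t\in L^2(\mu)$, which follows since both $\widehat F_t$ and $F_t$ are bounded by $1$ and $\mu$ is finite.
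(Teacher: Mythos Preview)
Your argument is correct and matches the paper's proof for items~(i) and~(ii) verbatim: compute $\E[\widehat F_t(x)\mid\Xi]=F_t(x)$ from~\eqref{eq:weakly-conjugate-model}, then pull the $\Xi$-measurable $F_t(x)$ out of the conditional expectation. For item~(iii) your route is a slight streamlining of the paper's: you factor $\E[\varepsilon_t(x)\varepsilon_{t+k}(y)\mid\Xi]$ directly, using that $\varepsilon_t$ and $\varepsilon_{t+k}$ are measurable functions of conditionally independent blocks, whereas the paper expands $\varepsilon_t=\widehat F_t-F_t$, reduces to $\E[\widehat F_t(x)\widehat F_{t+k}(y)\mid\Xi]$, and then applies~\eqref{eq:cyclic-independence} at the level of the individual indicators $\mathbb I_{[X_{it}\le x]}\mathbb I_{[X_{j,t+k}\le y]}$. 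Both arrive at the same place; yours is marginally cleaner.

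One caveat: you describe the within-cycle observations as conditionally \emph{i.i.d.}\ given $\Xi$, but equation~\eqref{eq:weakly-conjugate-model} only fixes their common \emph{marginal} $\xi_t$; no intra-cycle independence is assumed anywhere in the paper. This overstatement is harmless here, since your computations use only the marginals (for~(i)) and cross-cycle independence (for~(iii)), but you should drop the ``i.i.d.'' qualifier. Relatedly, the foundational lemma you propose about regular conditional distributions is more than the paper invokes: it works directly from~\eqref{eq:weakly-conjugate-model} and~\eqref{eq:cyclic-independence} without any such preliminary.
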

In summary, Lemma~\ref{thm:epsilon-is-well-behaved} can be interpreted as saying that the intra--cycle empirical \textsc{cdf}s of conjugate processes (seen as random elements in $L^2\left(\mu\right)$) are decomposable as `underlying, true \textsc{cdf}' plus `noise'. Indeed, by iterated expectations, the equalities stated in Lemma~\ref{thm:epsilon-is-well-behaved} hold unconditionally, and thus $\left(\varepsilon_t\right)$ is a sequence of centered random elements in $L^2\left(\mu\right)$, strongly orthogonal to the process $\left(F_t\right)$, whose lagged covariance operators are all identically zero. Notice however that in general $\left(\varepsilon_t\right)$ is not \emph{white} noise since when $k=0$ in item~\ref{thm:epsilon-is-well-behaved-item3} the covariances may depend on $t$. The importance of Lemma~\ref{thm:epsilon-is-well-behaved} lies on the fact that it sheds some light on how to tackle a fundamental difficulty that rises when one is considering estimation of the eigenfunctions $\varphi_j$, and recovery of the process $\left(\boldsymbol{Z}_t\right)$. The issue is that, since the distribution functions $F_t$ are not observable, direct estimation of the covariance operator becomes spoiled. To see why this is so, for $0\leq k\leq p$ and $x,y\in\mathbb{R}$, define
\begin{equation}
\widehat{C}_{k}\left(x,y\right):=\frac{1}{n-p}\sum_{t=1}^{n-p}\big(\widehat{F}_{t}\left(x\right)-\widehat{\E}F_0\left(x\right)\big)\big(\widehat{F}_{t+k}\left(y\right)-\widehat{\E}F_0\left(y\right)\big),\label{eq:C_k_hat}
\end{equation}
where
\begin{equation}
\widehat{\E}F_0\left(x\right):=\frac{1}{n}\sum_{t=1}^{n}\widehat{F}_{t}\left(x\right).\label{eq:mu_hat}
\end{equation}
It is clear that $\widehat{C}_{0}$ is generally an inconsistent estimator of $C_{0}$. Indeed, $\widehat{C}_0$ estimates $\Cov\big(\widehat{F}_{t}\left(x\right),\widehat{F}_{t}\left(y\right)\big)$ which is equal to $C_{0}\left(x,y\right) + \Cov\left(\varepsilon_{t}\left(x\right),\varepsilon_{t}\left(y\right)\right)$, and as a rule the second term in the latter sum is not identically zero. For integers $k\neq 0$, however, Lemma~\ref{thm:epsilon-is-well-behaved} ensures that the equality $\Cov\big(\widehat{F}_{t}\left(x\right),\widehat{F}_{t+k}\left(y\right)\big)=C_{k}\left(x,y\right)$ holds, and hence under mild assumptions $\widehat{C}_k$ is a consistent estimator of $C_k$. Thus, while one is faced with an important drawback when considering the naïve approach of estimating the functions $\varphi_j$ and the process $\left(\boldsymbol{Z}_t\right)$ directly through estimation of $C_0^\mu$, the alternative of estimating the \emph{lagged} autocovariance operators $C_k^\mu$ seems promising.

Now let $R^{\mu}$ be the operator acting on $L^2\left(\mu\right)$ defined by
\begin{equation*}
R^{\mu}f\left(x\right):=\int R_\mu\left(x,y\right)f\left(y\right)\,\mu\left(dy\right),
\end{equation*}
where, for $x,y\in\mathbb{R}$, we let
\begin{equation}
R_\mu\left(x,y\right):=\sum_{k=1}^{p}\int C_{k}\left(x,z\right)C_{k}\left(y,z\right)\mu\left(dz\right).\label{eq:R}
\end{equation}
Clearly, $R^\mu = \sum_{k=1}^p C_k^\mu C_k^{\mu*}$, where the $*$ denotes adjoining, and thus $R^\mu$ is a positive operator. As was the case with the operator $C_0^\mu$, we shall establish some further notation before proceeding. Let
\begin{equation*}\label{eq:DIMENSION-Rmu}
	d' := \rank{\left(R^\mu\right)} \leq\infty,
\end{equation*}
and let $\left(\theta_j:\,j\in \mathbb{N}\right)$ denote the non--increasing sequence of eigenvalues of $R^\mu$ (with repetitions, if any). Again for well-definiteness we adopt the convention that the sequence $\left(\theta_j\right)$ contains zeros if and only if $d'<\infty$. We define the eigenfunctions of $R^\mu$ via the equations
\begin{equation*}
	R^\mu \psi_j = \theta_j\psi_j,\qquad j\in \mathbb{N},
\end{equation*}
and assume that the set $\left\{\psi_j:\,j\in \mathbb{N}\right\}$ is orthonormal in $L^2\left(\mu\right)$.

It is straightforward to show that $\Ran\left(R^\mu\right)\subset \overline{\Ran\left(C_0^\mu\right)}$. Let us say that any two operators $T$ and $S$ are \emph{range--equivalent} if $\overline{\Ran\left(T\right)} = \overline{\Ran\left(S\right)}$. Thus $R^\mu$ and $C_0^\mu$ are range--equivalent if and only if the inclusion $\overline{\Ran\left(R^\mu\right)}\supset {\Ran\left(C_0^\mu\right)}$ holds. If this is the case, we have the following.
\begin{theorem}\label{thm:spectral-representation-F_t-Rmu}
Assume that $R^\mu$ and $C_0^\mu$ are range--equivalent. Define the scalar random variables $W_{tj}$ via
	\begin{equation}\label{eq:W_tj-definition}
		W_{tj} := \langle F_t - \E F_0, \psi_j\rangle,\qquad j\in \mathbb{N},\quad t=0,1,\dots
	\end{equation}
	If $\mu$ is a diffraction of $\left(\xi_t\right)$, then
	\begin{enumerate}[label={(\textit{\roman*}})]
		\item $\E W_{tj} = 0$ for all $t$ and all $j$;
		\item $\Var\left(W_{tj}\right) = \sum_{i = 1}^\infty \lambda_i \langle\psi_j, \varphi_i\rangle^2$ for all $t$ and all $j$;
		\item for each $j$, the function $\psi_j$ is bounded.
	\end{enumerate}
Moreover, for each $t$ the expansion
	\begin{equation}\label{eq:F_t-spectral-representation}
		F_t = \E F_0 + \sum_{j = 0}^\infty W_{tj}\psi_j
	\end{equation}
holds in $L^2\left(\mu\right)$, almost surely.
\end{theorem}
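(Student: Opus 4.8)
The plan is to mimic the structure of the proof of Theorem~\ref{thm:spectral-representation-F_t}, replacing the eigensystem of $C_0^\mu$ with that of $R^\mu$, but paying attention to the fact that the $\psi_j$ are \emph{not} the eigenfunctions of the covariance operator of $\left(F_t\right)$, so the nice uncorrelatedness properties (items \emph{(ii)}--\emph{(iii)} of the earlier theorem) do not carry over verbatim. First I would observe that, since $\mu$ is a diffraction, Lemma~\ref{thm:hilbert-embedding-F_t} gives that $F_t - \E F_0 \in L^2\left(\mu\right)$ almost surely, so the inner products $W_{tj} = \langle F_t - \E F_0, \psi_j\rangle$ are well-defined real random variables; measurability follows because $F_t$ is a random element of $L^2\left(\mu\right)$ and $\psi_j$ is fixed. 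Item \emph{(i)}, $\E W_{tj} = 0$, follows from the fact that the Bochner expectation of $F_t$ equals $\E F_0$ together with continuity of the linear functional $f \mapsto \langle f, \psi_j\rangle$, which lets one interchange $\E$ and $\langle\cdot,\psi_j\rangle$.

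For item \emph{(ii)}, I would expand $\psi_j$ in the orthonormal basis $\left\{\varphi_i\right\}$ of $\overline{\Ran\left(C_0^\mu\right)}$ (legitimate precisely because range-equivalence forces $\psi_j \in \overline{\Ran\left(R^\mu\right)} = \overline{\Ran\left(C_0^\mu\right)}$, so there is no leftover component in the null space). Writing $\psi_j = \sum_i \langle \psi_j, \varphi_i\rangle \varphi_i$ and using the spectral representation~\eqref{eq:F_t-spectral-representation-C_0}, one gets $W_{tj} = \sum_i \langle\psi_j,\varphi_i\rangle Z_{ti}$; then $\Var\left(W_{tj}\right) = \sum_{i,i'} \langle\psi_j,\varphi_i\rangle\langle\psi_j,\varphi_{i'}\rangle \Cov\left(Z_{ti}, Z_{ti'}\right) = \sum_i \langle\psi_j,\varphi_i\rangle^2 \lambda_i$, using items \emph{(ii)}--\emph{(iii)} of Theorem~\ref{thm:spectral-representation-F_t} (the $Z_{ti}$ are uncorrelated with variance $\lambda_i$). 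The interchange of the infinite sum with $\Var$/$\Cov$ needs a dominated-convergence / Cauchy--Schwarz justification, using $\sum_i |Z_{ti}|^2 \leq \mu\left(\mathbb{R}\right)$ from the Remark and $\sum_i \langle\psi_j,\varphi_i\rangle^2 = \norm{\psi_j}^2 = 1$; this is routine but is the one place where care is needed. Item \emph{(iii)}, boundedness of $\psi_j$, I would get the same way item \emph{(iv)} of Theorem~\ref{thm:spectral-representation-F_t} was presumably obtained: $\psi_j = \theta_j^{-1} R^\mu \psi_j$ (for $\theta_j > 0$), and $R^\mu\psi_j\left(x\right) = \sum_{k=1}^p \int C_k\left(x,z\right)\langle C_k\left(\cdot,z\right)... \rangle$-type expression is bounded in $x$ because each $C_k\left(x,y\right) = \Cov\left(F_0\left(x\right), F_k\left(y\right)\right)$ is bounded by $1/4$ (the $F_t$ take values in $[0,1]$), so the kernel $R_\mu\left(x,y\right)$ is uniformly bounded, and integrating a bounded kernel against an $L^2\left(\mu\right)$ function over the finite measure $\mu$ yields a bounded function.

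Finally, for the expansion~\eqref{eq:F_t-spectral-representation} I would argue that $\left\{\psi_j : j\in\mathbb{N}\right\}$, restricted to the indices with $\theta_j > 0$, is a complete orthonormal system for $\overline{\Ran\left(R^\mu\right)} = \overline{\Ran\left(C_0^\mu\right)}$, and that $F_t - \E F_0$ lies in this closed subspace almost surely (this is exactly what~\eqref{eq:F_t-spectral-representation-C_0} says, since each partial sum lies in $\overline{\Ran\left(C_0^\mu\right)}$ and the subspace is closed). Hence the abstract Hilbert-space expansion $F_t - \E F_0 = \sum_j \langle F_t - \E F_0, \psi_j\rangle \psi_j = \sum_j W_{tj}\psi_j$ holds in $L^2\left(\mu\right)$ for every $\omega$ in the almost-sure event where $F_t - \E F_0 \in \overline{\Ran\left(C_0^\mu\right)}$. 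I expect the main obstacle to be the bookkeeping around range-equivalence: one must be careful that, without the hypothesis $\overline{\Ran\left(R^\mu\right)} \supset \Ran\left(C_0^\mu\right)$, the basis $\left\{\psi_j\right\}$ could miss part of the fluctuation of $F_t$ and the expansion would fail — so the range-equivalence assumption is doing real work and must be invoked at precisely the step where completeness of $\left\{\psi_j\right\}$ in the relevant subspace is claimed.
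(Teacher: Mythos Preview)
Your proof plan is correct and complete; the paper itself declares this theorem ``an immediate consequence of the stated assumptions'' and leaves the details as an exercise, so your argument is in fact more thorough than the original. The route you take---reducing each item to the already-established Theorem~\ref{thm:spectral-representation-F_t} via the identification $\overline{\Ran\left(R^\mu\right)} = \overline{\Ran\left(C_0^\mu\right)}$ and then invoking Theorem~\ref{thm:xi-orthogonal-ker}/Corollary~\ref{thm:hilbert-representations} for the expansion---is exactly the intended one.
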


Introducing the operator $R^\mu$ is justified in the same fashion as in Bathia et al.~\cite{bathia2010identifying}, and has an inferential motivation. In view of the above comments, the aim is to obtain representation~\eqref{eq:F_t-spectral-representation} as an alternative to \eqref{eq:F_t-spectral-representation-C_0}, the strategy thus becoming to estimate $R^\mu$, its associated eigenvalues and eigenfunctions, and to use the latter to recover the $\ell^2$--valued time series $\left(\boldsymbol{W}_t\right)$, where $\boldsymbol{W}_t:=\left(W_{tj}:\, j\in \mathbb{N}\right)$. Notice that, as was the case with $\left(\boldsymbol{Z}_t\right)$, whenever $C_0^\mu$ is finite--rank, the process $\left(\boldsymbol{W}_{t}\right)$ can be seen as being $\mathbb{R}^{d}$--valued, as in this setting the sum in $\eqref{eq:F_t-spectral-representation}$ has finitely many terms. For the above approach to make sense, however, one must assume that the $L^2\left(\mu\right)$--closures of $\Ran\left(C_k^\mu\right)$ and of $\Ran\left(C_0^\mu\right)$ coincide. When $C_0^\mu$ is finite--rank, we have the following criteria.

\begin{lemma}\label{thm:finite-rank-equivalence1}
Assume $d < \infty$. Then the operators $C_k^\mu$ and $C_0^\mu$ are range--equivalent if and only if the $d\times d$ matrix $\left(\E Z_{0i} Z_{kj}\right)_{ij}$ is of rank $d$.
\end{lemma}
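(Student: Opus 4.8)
The plan is to work with the finite--dimensional representation afforded by $d<\infty$. Since $C_0^\mu$ has finite rank $d$, the span $\mathcal{H}_0 := \overline{\Ran(C_0^\mu)}$ is the $d$--dimensional subspace of $L^2(\mu)$ generated by $\varphi_1,\dots,\varphi_d$, and by Theorem~\ref{thm:spectral-representation-F_t} we have the finite expansion $F_t - \E F_0 = \sum_{i=1}^d Z_{ti}\varphi_i$ almost surely, so that $F_t - \E F_0 \in \mathcal{H}_0$ for every $t$. The first step is to compute $C_k^\mu$ acting on the basis $\{\varphi_i\}$. Using $C_k(x,y) = \Cov(F_0(x),F_k(y)) = \E\big[(F_0(x)-\E F_0(x))(F_k(y)-\E F_0(y))\big]$ and substituting the spectral expansions of both $F_0-\E F_0$ and $F_k-\E F_0$, one gets
\begin{equation*}
C_k(x,y) = \sum_{i=1}^d\sum_{j=1}^d \E[Z_{0i}Z_{kj}]\,\varphi_i(x)\varphi_j(y),
\end{equation*}
where the interchange of expectation and the (finite) sums is justified by dominated convergence together with the uniform bound $\sum_j |Z_{tj}|^2 \le \mu(\mathbb{R})$ noted in the remark after Theorem~\ref{thm:spectral-representation-F_t}. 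Denote $A := (\E Z_{0i}Z_{kj})_{ij}$, the $d\times d$ matrix in the statement. Integrating against $\varphi_\ell \in L^2(\mu)$ and using orthonormality, $C_k^\mu \varphi_\ell = \sum_{i=1}^d A_{i\ell}\,\varphi_i$; that is, in the coordinates given by $(\varphi_1,\dots,\varphi_d)$, the operator $C_k^\mu$ restricted to $\mathcal{H}_0$ is represented by the matrix $A^{\!\top}$ (or $A$, depending on the convention for which index is acted upon). In particular $\Ran(C_k^\mu) \subset \mathcal{H}_0 = \Ran(C_0^\mu)$, which re-derives the inclusion already mentioned in the text, and $\Ran(C_k^\mu)$ is exactly the image in $\mathcal{H}_0$ of the column space of this matrix.

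Now the equivalence is essentially linear algebra. Range--equivalence of $C_k^\mu$ and $C_0^\mu$ means $\overline{\Ran(C_k^\mu)} = \mathcal{H}_0$; since $\Ran(C_k^\mu)$ is a subspace of the finite--dimensional space $\mathcal{H}_0$, closure is automatic, so the condition is simply $\Ran(C_k^\mu) = \mathcal{H}_0$, i.e.\ the restriction $C_k^\mu|_{\mathcal{H}_0}\colon \mathcal{H}_0 \to \mathcal{H}_0$ is surjective, equivalently bijective (finite dimensions), equivalently its matrix $A$ has rank $d$. Conversely if $\rank(A) = d$ the same argument shows $\Ran(C_k^\mu) = \mathcal{H}_0 = \Ran(C_0^\mu)$. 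This yields the ``if and only if.'' One should also record that $\Ran(C_0^\mu) = \mathcal{H}_0$ with no closure needed, which follows because a finite--rank positive operator has closed range equal to the span of its eigenfunctions with nonzero eigenvalue; this is where the convention ``$(\lambda_j)$ contains zeros iff $C_0^\mu$ is finite--rank'' is used to identify $d$ with the number of nonzero eigenvalues.

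The one genuine technical point — the part I expect to be the main obstacle — is the justification of the termwise expansion of $C_k(x,y)$ and, more importantly, of the identity $C_k^\mu\varphi_\ell = \sum_i \E[Z_{0i}Z_{kj}]\varphi_i$ at the level of $L^2(\mu)$ functions rather than pointwise. Pointwise one must control the exchange of $\E$ with $\int\!\cdot\,\mu(dy)$ when forming $C_k^\mu f$; since $|C_k(x,y)| \le \big(\sum_i\lambda_i|\varphi_i(x)|^2\big)^{1/2}\big(\sum_j\lambda_j|\varphi_j(y)|^2\big)^{1/2}$ by Cauchy--Schwarz (using $|\E Z_{0i}Z_{kj}| \le \sqrt{\lambda_i\lambda_j}$ from items (i)–(ii) of Theorem~\ref{thm:spectral-representation-F_t}) and each $\varphi_i$ is bounded with $\mu$ finite, everything in sight is dominated and Fubini applies; the sums being finite (as $d<\infty$) removes any convergence subtlety. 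Once the matrix representation of $C_k^\mu|_{\mathcal{H}_0}$ is pinned down, the rest is the elementary rank argument above. I would present the bounded-operator/matrix identification carefully and then dispatch the equivalence in two lines.
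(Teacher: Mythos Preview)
Your proposal is correct and follows essentially the same route as the paper's own proof: both expand $C_k(x,y)=\sum_{i,j}\E[Z_{0i}Z_{kj}]\varphi_i(x)\varphi_j(y)$ using the finite spectral representation, observe $\Ran(C_k^\mu)\subset\Ran(C_0^\mu)$, and then reduce range--equivalence to the $d\times d$ matrix being full rank. The paper's proof is a two--line sketch that delegates the details to Bathia et al.; you have supplied those details (matrix identification on $\mathcal{H}_0$, closure being automatic in finite dimensions, Fubini/boundedness justifications), so your write--up is in fact more complete than the paper's.
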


\begin{corollary}\label{thm:finite-rank-equivalence2}
Assume $d < \infty$. If there is an integer $k\geq 1$ such that the matrix $\left(\E Z_{0i} Z_{kj}\right)_{ij}$ is of rank $d$, then, provided $p$ is large enough, the operators $R^\mu$ and $C_0^\mu$ are range--equivalent.
\end{corollary}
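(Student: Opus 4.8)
The plan is to push all the work onto the single lag $k$ furnished by the hypothesis. By Lemma~\ref{thm:finite-rank-equivalence1}, the assumption that $\left(\E Z_{0i}Z_{kj}\right)_{ij}$ has rank $d$ is exactly the statement that $C_k^\mu$ and $C_0^\mu$ are range--equivalent; since $d<\infty$, the space $\Ran\left(C_0^\mu\right)$ is $d$--dimensional, hence closed, so this reads $\overline{\Ran\left(C_k^\mu\right)}=\Ran\left(C_0^\mu\right)$. The remaining task is to show that, as soon as $p\geq k$, the operator $R^\mu=\sum_{j=1}^p C_j^\mu C_j^{\mu*}$ satisfies $\overline{\Ran\left(R^\mu\right)}\supseteq\Ran\left(C_0^\mu\right)$; by the discussion preceding Theorem~\ref{thm:spectral-representation-F_t-Rmu} this inclusion is equivalent to range--equivalence of $R^\mu$ and $C_0^\mu$, since the reverse inclusion $\Ran\left(R^\mu\right)\subseteq\overline{\Ran\left(C_0^\mu\right)}$ has already been noted there.

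Two elementary Hilbert--space facts carry the argument. First, for any bounded operator $T$ on $L^2\left(\mu\right)$ one has $\overline{\Ran\left(TT^*\right)}=\overline{\Ran\left(T\right)}$: from $\langle TT^*f,f\rangle=\norm{T^*f}^2$ we get $\Null\left(TT^*\right)=\Null\left(T^*\right)$, and taking orthogonal complements (valid because $TT^*$ is self--adjoint) gives the claim. Second, if $A_1,\dots,A_p$ are positive operators and $A=\sum_j A_j$, then $\langle Af,f\rangle=\sum_j\langle A_jf,f\rangle$ is a sum of nonnegative terms, whence $\Null\left(A\right)=\bigcap_j\Null\left(A_j\right)$, and passing to orthogonal complements,
\begin{equation*}
\overline{\Ran\left(A\right)}=\Big(\bigcap_j\Null\left(A_j\right)\Big)^{\perp}=\overline{\sum_j\overline{\Ran\left(A_j\right)}}\ \supseteq\ \overline{\Ran\left(A_i\right)}\quad\text{for every }i.
\end{equation*}

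Putting these together with $A_j=C_j^\mu C_j^{\mu*}$ yields, for $p\geq k$,
\begin{equation*}
\overline{\Ran\left(R^\mu\right)}\ \supseteq\ \overline{\Ran\left(C_k^\mu C_k^{\mu*}\right)}=\overline{\Ran\left(C_k^\mu\right)}=\Ran\left(C_0^\mu\right),
\end{equation*}
which is exactly the inclusion needed; in particular ``$p$ large enough'' may be taken concretely as $p\geq k$. I do not anticipate a genuine obstacle: the argument is short and self--contained, the only step requiring a little care being the identity $\big(\bigcap_j\Null\left(A_j\right)\big)^{\perp}=\overline{\sum_j\overline{\Ran\left(A_j\right)}}$, which is precisely where the structure of $R^\mu$ as a sum of positive operators enters.
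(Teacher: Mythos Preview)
Your proof is correct and follows essentially the same approach as the paper: invoke Lemma~\ref{thm:finite-rank-equivalence1} to identify $\overline{\Ran\left(C_k^\mu\right)}$ with $\Ran\left(C_0^\mu\right)$, and then use the identity $\overline{\Ran\left(TT^*\right)}=\overline{\Ran\left(T\right)}$. The paper simply takes $k=1$ without loss of generality and sets $p=1$, so that $R^\mu=C_1^\mu C_1^{\mu*}$ reduces to a single term; you instead keep $p\geq k$ general and supply the extra step---via $\Null\big(\sum_j A_j\big)=\bigcap_j\Null\left(A_j\right)$ for positive $A_j$---showing that the remaining summands in $R^\mu$ cannot shrink the closed range, which makes the phrase ``$p$ large enough'' explicit as $p\geq k$.
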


\begin{remark}
The condition that, for some $k$, the matrix $\left(\E Z_{0i} Z_{kj}\right)_{ij}$ is of rank $d$ is easier to appreciate in the case where $F_t - \E F_0$ lies in a one-dimensional subspace of $L^2\left(\mu\right)$, that is the case $d=1$. In this setting the matrix $\left(\E Z_{0i}Z_{kj}\right)_{ij}$ is indeed a scalar, and the condition that it is full-rank for some $k\geq 1$ means that the univariate time series $\left(Z_{t1}\right)$ is correlated at some lag $k$. Thus assuming that $\overline{\Ran\left(R^\mu\right)} = \overline{\Ran\left(C_0^\mu\right)}$ amounts to a requirement that the sequence $\left(F_t\right)$ displays `enough' dependence. Regarding the integer $p$, it is introduced in connection with the definition of the operator $R^\mu$: summing the lagged autocovariance operators up to $C_p^\mu$ is a way to ensure that $R^\mu$ captures the dependence structure of the process $\left(F_t\right)$, which in real data is likely to be concentrated on the first few lags and then decay. Said another way, in applications $p$ should be chosen by the statistician having in mind the fact that the precise values of $k$ for which the matrix $\left(\E Z_{0i} Z_{kj}\right)_{ij}$ is full rank are generally unknown. Using some of the lagged ${C}_k^\mu$ in the definition of ${R}^\mu$ is a parsimonious way to overcome this lack of knowledge.
\end{remark}

In view of the above discussion, in the remainder of the text we shall make the following assumption.
\begin{description}[leftmargin=!]
	\item[Assumption~R] \emph{The operators $R^\mu$ and $C_0^\mu$ are range--equivalent}.
\end{description}
Notice that Assumption~R \emph{rules out the possibility of $\left(\xi_t\right)$ being an independent sequence}. As previously argued, in statistical applications one is mostly interested in the scenario where $d < \infty$. This hypothesis relates to functional \textsc{pca} and identification of finite dimensionality in functional data. See Hall and Vial~\cite{hall2006assessing} and Bathia et al.~\cite{bathia2010identifying} for a discussion. In this setting, as mentioned, the dynamic aspects of $\left(\xi_t\right)$ are entirely determined by the $\mathbb{R}^d$--valued process $\left(\boldsymbol{Z}_t\right)$. But also, and this is the crucial point, Theorem~\ref{thm:spectral-representation-F_t-Rmu} ensures that these dynamic aspects are also determined by $\left(\boldsymbol{W}_t\right)$, as long as $\rank\left(R^\mu\right) = \rank\left(C_0^\mu\right)$. From either of these finite dimensional processes, the dynamics of the (in principle) infinite dimensional $\left(F_t\right)$ can be studied.

\subsection{Estimation and asymptotic properties}
Unfortunately, neither the process $\left(\boldsymbol{Z}_t\right)$ nor $\left(\boldsymbol{W}_t\right)$ is observable, not to mention the fact that the operator $R^\mu$ is unknown. Indeed in a first stage all one observes is a sample $\left(X_{1t},\dots,X_{q_t,t}\right)$ of the process $\left(X_\tau\right)$ in each cycle $t$, and the associated empirical distribution functions $\widehat{F}_t$. However, under suitable conditions one can hope to recover the underlying structure which characterizes the latent process $\left(\xi_t\right)$. In this direction, define
\begin{equation}
\widehat{R}_\mu\left(x,y\right)=\sum_{k=1}^{p}\int\widehat{C}_{k}\left(x,z\right)\widehat{C}_{k}\left(y,z\right)\mu\left(dz\right),\label{eq:M_hat}
\end{equation}
and let $\widehat{R}^\mu$ be the integral operator with kernel $\widehat{R}_\mu$ and non--increasing sequence of eigenvalues $\big(\widehat{\theta}_j:\,j\in\mathbb{N}\big)$, with repetitions. It is straightforward to show that $\widehat{R}^\mu$ is a finite--rank operator, of rank, say, $d_n$, with $d_n \leq n-p$, and thus we have $\widehat{\theta}_1\geq\dots\geq\widehat{\theta}_{d_n}>0=\widehat{\theta}_{d_n+1} = \widehat{\theta}_{d_n+2} = \cdots$. Define the eigenfunctions of $\widehat{R}^\mu$ via the equations
\begin{equation*}
	\widehat{R}^\mu \widehat{\psi}_j = \widehat{\theta}_j\widehat{\psi}_j,\qquad j\in \mathbb{N},
\end{equation*}
and assume that the set $\big\{\widehat{\psi}_j:\,j\in \mathbb{N}\big\}$ is orthonormal in $L^2\left(\mu\right)$. In the Appendix~\ref{sec:estimation} below we provide a straightforward estimation procedure for computing these quantities, which relies on simple matrix analysis. Now put, for $j\in\mathbb{N}$ and $1\leq t\leq n$,
\begin{equation}\label{eq:W-hat-definition}
	\widehat{W}_{tj} := \big\langle \widehat{F}_t - \widehat{\E}F_0, \widehat{\psi}_j \big\rangle.
\end{equation}
In what follows we study the asymptotic properties of the proposed estimators.

\bigskip

Let us first consider estimation of ${\E}F_0$. An important property of conjugate processes is that a \textsc{lln} for $\big(\widehat{F}_t\big)$ holds under weak assumptions on the sampling scheme and on the latent distribution process $\left(\xi_t\right)$.

\begin{theorem}\label{thm:LLN-for-Fhat}
Let $\left(\xi_t,X_\tau\right)$ be a cyclic--independent conjugate process, and let $\mu$ be a diffraction of $\left(\xi_t\right)$. If the sequence $\left(\xi_t\right)$ is $\Prob$-ergodic, in the sense that $\lim_{n\rightarrow\infty}n^{-1}\sum_{t=1}^n\xi_t = \E\xi_0$ in probability, then
\begin{enumerate}[label={(\textit{\roman*}})]
	\item $\Vert n^{-1}\sum_{t=1}^n F_t - \E F_0\Vert = o_\Prob\left(1\right)$;\label{thm:LLN-for-Fhat-item-i}
	\item $\Vert \widehat{\E}F_0 - \E F_0\Vert = o_\Prob\left(1\right)$.\label{thm:LLN-for-Fhat-item-ii}
\end{enumerate}
If moreover $\norm{n^{-1}\sum_{t=1}^n F_t - \E F_0} = O_\Prob\left(n^{-1/2}\right)$, then it holds that $\big\Vert\widehat{\E}F_0 - \E F_0\big\Vert = O_\Prob\left(n^{-1/2}\right)$.
\end{theorem}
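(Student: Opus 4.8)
The plan is to prove the three assertions in sequence, deriving (\textit{ii}) and the rate claim from (\textit{i}) by controlling the extra averaging over cycle sizes. First I would establish item (\textit{i}). The quantity $n^{-1}\sum_{t=1}^n F_t$ is, pointwise in $x$, the average of the $\xi_t(-\infty,x]$, so the hypothesis $n^{-1}\sum_{t=1}^n\xi_t \to \E\xi_0$ in probability (as random measures) should translate into pointwise convergence $n^{-1}\sum_{t=1}^n F_t(x) \to \E F_0(x)$ at every continuity point $x$ of $\E F_0$. To upgrade pointwise convergence to convergence in the $L^2(\mu)$ norm I would invoke the structure already in place: the $F_t$ and $\E F_0$ are all bounded by $1$, the measure $\mu$ is finite (being equivalent to Lebesgue on a bounded interval containing $\supp \E\xi_0$), and the averages are monotone in $x$; a Pólya-type / dominated-convergence argument then gives $\Vert n^{-1}\sum_{t=1}^n F_t - \E F_0\Vert_\mu \to 0$ in probability. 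One has to be a little careful to phrase the almost-sure-versus-in-probability passage correctly, but since everything is uniformly bounded, bounded convergence lets one pass from the in-probability hypothesis to the in-probability $L^2(\mu)$ conclusion (e.g.\ via subsequences along which convergence is almost sure).

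Next I would handle item (\textit{ii}). Here $\widehat{\E}F_0 = n^{-1}\sum_{t=1}^n \widehat{F}_t = n^{-1}\sum_{t=1}^n F_t + n^{-1}\sum_{t=1}^n \varepsilon_t$, using the tautological decomposition $\widehat{F}_t = F_t + \varepsilon_t$ from \eqref{eq:empirical-df-equals-F-plus-error}. By item (\textit{i}) the first term converges to $\E F_0$ in $L^2(\mu)$ in probability, so it suffices to show $\Vert n^{-1}\sum_{t=1}^n \varepsilon_t\Vert = o_\Prob(1)$. For this I would compute the second moment $\E\Vert n^{-1}\sum_{t=1}^n \varepsilon_t\Vert^2 = n^{-2}\sum_{s,t}\E\langle \varepsilon_s,\varepsilon_t\rangle$ and use Lemma~\ref{thm:epsilon-is-well-behaved}: by item~\ref{thm:epsilon-is-well-behaved-item3} (after taking iterated expectations over $\Xi$) the cross terms $\E\langle \varepsilon_s,\varepsilon_t\rangle$ vanish for $s\neq t$, so the sum collapses to $n^{-2}\sum_{t=1}^n \E\Vert\varepsilon_t\Vert^2$. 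Since $\Vert\varepsilon_t\Vert \leq \Vert\widehat{F}_t\Vert + \Vert F_t\Vert \leq 2\mu(\mathbb{R})^{1/2}$ uniformly, this is $O(1/n)$, hence $\to 0$, and Markov's inequality gives the $o_\Prob(1)$ claim; convergence in $L^2$ of this term plus convergence in probability of the other yields (\textit{ii}).

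For the final rate statement, the same decomposition works: $n^{-1}\sum_{t=1}^n \varepsilon_t$ has $L^2(\mu)$-norm of order $n^{-1/2}$ in $L^2$-expectation by the computation just described, hence is $O_\Prob(n^{-1/2})$; combined with the hypothesis $\Vert n^{-1}\sum_{t=1}^n F_t - \E F_0\Vert = O_\Prob(n^{-1/2})$ and the triangle inequality, this gives $\Vert\widehat{\E}F_0 - \E F_0\Vert = O_\Prob(n^{-1/2})$. I expect the only real obstacle to be the first step — the passage from weak convergence of the averaged random measures to $L^2(\mu)$-norm convergence of the averaged \textsc{cdf}s — where one must exploit uniform boundedness, monotonicity in $x$, and finiteness of $\mu$ carefully (essentially a stochastic version of Pólya's theorem); the $\varepsilon_t$ part is routine once Lemma~\ref{thm:epsilon-is-well-behaved} is in hand, since the orthogonality there kills every cross term and the uniform bound makes the diagonal sum trivially $O(1/n)$.
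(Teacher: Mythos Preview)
Your argument is correct but differs from the paper's in two places. For item~(\textit{i}), instead of your hands-on P\'olya/dominated-convergence argument, the paper simply invokes the Continuous Mapping Theorem together with the continuity of the embedding $\nu\mapsto\bigl(x\mapsto\nu(-\infty,x]\bigr)$ from $M_1(\mathbb{R})$ into $L^2(\mu)$, already established in the proof of Lemma~\ref{thm:hilbert-embedding}; this is a one-line reduction, whereas your approach effectively reproves that continuity in situ (your subsequence remark is exactly the standard proof of the CMT). For the rate statement, your second-moment computation---orthogonality of the $\varepsilon_t$ from Lemma~\ref{thm:epsilon-is-well-behaved}, the uniform bound $\norm{\varepsilon_t}\leq 2\mu(\mathbb{R})^{1/2}$, then Markov---is more elementary than the paper's route, which applies a Hilbert-space Hoeffding inequality (Boucheron et al.) to the conditionally independent, centered, bounded sequence $\bigl(\widehat{F}_t-F_t\,\big\vert\,\Xi\bigr)$ and then integrates out $\Xi$. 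Both yield $\norm{n^{-1}\sum_t\varepsilon_t}=O_\Prob(n^{-1/2})$; Hoeffding delivers a sub-Gaussian tail, but that extra strength is not used here. For item~(\textit{ii}) the paper leaves the details as an exercise, and your decomposition-plus-second-moment argument is precisely what fills that gap.
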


\bigskip

Regarding estimation of the eigenfunctions $\psi_j$, further assumptions on the process $\left(\xi_t, X_\tau\right)$ may be needed. We establish consistency of $\widehat{\psi}_j$ in two different flavors: first, without a rate and assuming little else than convergence $n^{-1}\sum_{t=1}^n F_t\left(x\right) F_{t+k}\left(y\right) \rightarrow \E F_0\left(x\right)\E F_k\left(y\right)$. Then, by imposing a $\psi$--mixing condition\footnote{See Bradley~\cite{bradley2005basic} for a discussion and definitions.} on $\big(\widehat{F}_t\big)$, we are able to derive consistency with a rate. In any case, this is straightly related to convergence $\widehat{R}^\mu\rightarrow R^\mu$ in the Hilbert--Schmidt\footnote{Recall that an operator acting on $L^2\left(\mu\right)$ is said to be a \emph{Hilbert--Schmidt operator} if and only if it is an integral operator whose kernel lies in $L^2\left(\mu\otimes\mu\right)$. The \emph{Hilbert--Schmidt norm} $\norm{\cdot}_{HS}$ of such an operator is the $L^2\left(\mu\otimes\mu\right)$ norm of its kernel.} norm $\norm{\cdot}_{HS}$. The additional assumptions in Theorems~\ref{thm:LLN-for-psihat-no-rate} and \ref{thm:LLN-for-psihat} below can, in a sense, be understood as asking that the process $\left(\xi_t\right)$ does not display `too much' dependence, in contrast with the requirement that $R^\mu$ and $C_0^\mu$ are range--equivalent, which imposes that the latent distribution process displays `enough' dependence.

\begin{theorem}\label{thm:LLN-for-psihat-no-rate}
	In addition to the conditions of Theorem~\ref{thm:LLN-for-Fhat}, assume that, for $x,y\in\mathbb{R}$ and $1\leq k\leq p$, one has
	\begin{equation}\label{eq:auto-covariance-consistency}
		\frac{1}{n}\sum_{t=1}^n F_t\left(x\right) F_{t+k}\left(y\right) = \E F_0\left(x\right) F_k\left(y\right) + o_\Prob\left(1\right),
	\end{equation}
	in $L^2\left(\mu\otimes\mu\right)$. Then,
	\begin{enumerate}[label={(\textit{\roman*}})]
		\item $\Vert \widehat{R}^\mu - R^\mu\Vert_{HS} = o_\Prob\left(1\right)$;
		\item $\sup_{j\in \mathbb{N}}\vert \widehat{\theta}_j - \theta_j\vert = o_\Prob\left(1\right)$.
	\end{enumerate}
	If moreover the nonzero eigenvalues of $R^\mu$ are all distinct, then
	\begin{enumerate}[label={(\textit{\roman*}}), start = 3]
	\item $\Vert \widehat{\psi}_j - \psi_j\Vert = o_\Prob\left(1\right)$, for each $j$ such that $\theta_j > 0$.
	\end{enumerate}
\end{theorem}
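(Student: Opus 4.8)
The plan is to reduce everything to the operator convergence in part \emph{(i)}, since parts \emph{(ii)} and \emph{(iii)} follow from standard perturbation theory for compact self-adjoint operators once \emph{(i)} is in hand. First I would establish \emph{(i)}: writing $\widehat{R}^\mu = \sum_{k=1}^p \widehat{C}_k^\mu (\widehat{C}_k^\mu)^*$ and $R^\mu = \sum_{k=1}^p C_k^\mu (C_k^\mu)^*$, it suffices by the triangle inequality and the bound $\norm{AB}_{HS}\leq\norm{A}_{op}\norm{B}_{HS}$ (together with $\norm{\cdot}_{op}\leq\norm{\cdot}_{HS}$) to show that $\norm{\widehat{C}_k^\mu - C_k^\mu}_{HS} = o_\Prob(1)$ for each fixed $k$ with $1\leq k\leq p$, and that $\norm{\widehat{C}_k^\mu}_{HS} = O_\Prob(1)$. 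The Hilbert--Schmidt norm of $\widehat{C}_k^\mu - C_k^\mu$ is just the $L^2(\mu\otimes\mu)$ norm of $\widehat{C}_k - C_k$, so this becomes a statement about convergence of the empirical lagged autocovariance kernel. Using the tautological decomposition $\widehat{F}_t = F_t + \varepsilon_t$ from \eqref{eq:empirical-df-equals-F-plus-error}, I would expand $\widehat{C}_k(x,y) = (n-p)^{-1}\sum_{t=1}^{n-p}(\widehat{F}_t(x) - \widehat{\E}F_0(x))(\widehat{F}_{t+k}(y) - \widehat{\E}F_0(y))$ into cross-terms. The terms involving only the $F_t$'s converge to $C_k(x,y) = \E F_0(x) F_k(y) - \E F_0(x)\E F_0(y)$ by hypothesis \eqref{eq:auto-covariance-consistency} combined with Theorem~\ref{thm:LLN-for-Fhat}\ref{thm:LLN-for-Fhat-item-i},\ref{thm:LLN-for-Fhat-item-ii} (note $\E F_k = \E F_0$); the terms involving $\varepsilon_t$ or $\varepsilon_{t+k}$ must be shown to be asymptotically negligible in $L^2(\mu\otimes\mu)$.

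The negligibility of the $\varepsilon$-terms is where the main work lies, and I expect it to be the principal obstacle. The idea is to exploit Lemma~\ref{thm:epsilon-is-well-behaved}: conditionally on $\Xi$, the noise is centered, strongly orthogonal to $(F_t)$, and has vanishing lagged ($k\neq 0$) covariance. So for a generic cross term such as $(n-p)^{-1}\sum_t F_t(x)\varepsilon_{t+k}(y)$, I would compute its conditional $L^2(\mu\otimes\mu)$ mean by expanding the square and taking $\E[\cdot\mid\Xi]$; by items \ref{thm:epsilon-is-well-behaved-item2} and \ref{thm:epsilon-is-well-behaved-item3} (using $k\geq 1$ so that at lag-difference $0$ we pick up only diagonal terms, and these are $O(1/q_t)$ since $\varepsilon_t$ is a centered empirical process based on $q_t$ conditionally i.i.d.\ draws from $\xi_t$), the conditional second moment is $O\big((n-p)^{-2}\sum_t \E[\norm{\varepsilon_t}^2\mid\Xi]\big)$, and $\E[\norm{\varepsilon_t}^2\mid\Xi]\leq \mu(\mathbb{R})/q_t$ or simply $\leq\mu(\mathbb{R})$ since the $\widehat F_t$ and $F_t$ are uniformly bounded by $1$. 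This gives an $O((n-p)^{-1})$ conditional bound, hence unconditional convergence to $0$ in $L^1$ and thus in probability, via Markov's inequality and dominated convergence (the integrand is bounded by $4\mu(\mathbb{R})^2$). The term $(n-p)^{-1}\sum_t\varepsilon_t(x)\varepsilon_{t+k}(y)$ is handled the same way, again using $k\neq 0$ in item~\ref{thm:epsilon-is-well-behaved-item3} to kill the conditional mean and controlling the conditional variance by a crude bound. The mean-correction terms involving $\widehat{\E}F_0 - \E F_0$ are absorbed using Theorem~\ref{thm:LLN-for-Fhat} and the uniform boundedness of all the CDFs, plus the elementary algebraic identity relating $\widehat C_k$ to the non-centered sums.

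Once \emph{(i)} holds, \emph{(ii)} is immediate: eigenvalues of self-adjoint compact operators are $1$-Lipschitz in operator norm by the Weyl / Lidskii inequality, and $\norm{\cdot}_{op}\leq\norm{\cdot}_{HS}$, so $\sup_j|\widehat\theta_j - \theta_j|\leq\norm{\widehat R^\mu - R^\mu}_{op} = o_\Prob(1)$. For \emph{(iii)}, assuming the nonzero eigenvalues of $R^\mu$ are distinct, I would invoke the standard perturbation bound for eigenprojections: for a fixed $j$ with $\theta_j>0$, once $\norm{\widehat R^\mu - R^\mu}_{HS}$ is smaller than half the spectral gap $\min(\theta_{j-1}-\theta_j,\theta_j-\theta_{j+1})$ around $\theta_j$ (with the obvious modification at $j=1$), the one-dimensional spectral projection $\widehat\Pi_j$ of $\widehat R^\mu$ onto its $j$-th eigenspace satisfies $\norm{\widehat\Pi_j - \Pi_j}_{op} \leq C\norm{\widehat R^\mu - R^\mu}_{op}/\text{gap}$ by the resolvent (Riesz projection) formula, and this event has probability tending to $1$. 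Choosing the sign of $\widehat\psi_j$ so that $\langle\widehat\psi_j,\psi_j\rangle\geq 0$, one has $\norm{\widehat\psi_j - \psi_j}\leq\sqrt{2}\,\norm{\widehat\Pi_j - \Pi_j}_{op}$ (the standard Lemma, e.g.\ as in Bosq~\cite{bosq2000linear} or Horváth--Kokoszka), and so $\norm{\widehat\psi_j - \psi_j} = o_\Prob(1)$. No rate is claimed here precisely because \emph{(i)} is only an $o_\Prob(1)$ statement; the quantitative version will be Theorem~\ref{thm:LLN-for-psihat}.
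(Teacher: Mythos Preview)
Your proof is correct and follows the same overall strategy as the paper: establish $\widehat{C}_k\to C_k$ in $L^2(\mu\otimes\mu)$ by isolating the contribution of the noise $\varepsilon_t$, pass to $\widehat{R}^\mu\to R^\mu$ via continuity of composition and adjoints, and deduce \emph{(ii)} and \emph{(iii)} from operator perturbation theory (the paper simply cites Mas and Menneteau~\cite{mas2003perturbation} where you spell out Weyl's inequality and the resolvent/Riesz-projection bound).

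The one genuine difference is in how the noise terms are dispatched. You expand $\widehat{F}_t\widehat{F}_{t+k} - F_tF_{t+k}$ into $F_t\varepsilon_{t+k} + \varepsilon_tF_{t+k} + \varepsilon_t\varepsilon_{t+k}$ and control each piece by a direct conditional second-moment computation. The paper instead packages all three pieces into $H_t:=\widehat{F}_t\widehat{F}_{t+1} - F_tF_{t+1}$ and observes that, conditional on $\Xi$, the subsequences $(H_{2t})$ and $(H_{2t-1})$ are each centered and \emph{independent} (since $H_t$ depends only on cycles $t$ and $t+1$, and cyclic independence makes disjoint blocks independent), so a conditional Hilbert-space \textsc{lln} applies to each, followed by dominated convergence to remove the conditioning. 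The paper's even/odd trick avoids separate treatment of the three cross-terms and the fourth-moment expansion implicit in your handling of $\varepsilon_t\varepsilon_{t+k}$ --- note that your ``crude bound'' there tacitly uses full cyclic independence, not just Lemma~\ref{thm:epsilon-is-well-behaved}, to kill the off-diagonal summands in $\E\big[\langle\varepsilon_t\otimes\varepsilon_{t+k},\varepsilon_s\otimes\varepsilon_{s+k}\rangle\mid\Xi\big]$. Your route, on the other hand, is more explicit about the $O((n-p)^{-1})$ size of each noise contribution and would transition more directly to the rate statement in Theorem~\ref{thm:LLN-for-psihat}. (A minor remark: the $O(1/q_t)$ bound you mention for $\E[\Vert\varepsilon_t\Vert^2\mid\Xi]$ assumes conditionally i.i.d.\ intra-cycle samples, which is not a standing hypothesis; your fallback bound $\leq\mu(\mathbb{R})$ is the correct one here.)
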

The condition in equation~\eqref{eq:auto-covariance-consistency} is imposed \emph{ad hoc}. For insights on Hilbertian processes $\left(F_t\right)$ for which it might hold, see Bosq~\cite{bosq2002estimation}. The following result is essentially a restatement of Theorem~1 in Bathia et al.~\cite{bathia2010identifying}, which gives sufficient conditions for $\sqrt{n}$-consistency of $\widehat{\psi}_j$.

\begin{theorem}\label{thm:LLN-for-psihat}
Let $\left(\xi_t,X_\tau\right)$ be a cyclic--independent conjugate process, and let $\mu$ be a diffraction of $\left(\xi_t\right)$. Assume that $\big(\widehat{F}_t\big)$ is a $\psi$--mixing sequence, with the mixing coefficients $\Psi\left(k\right)$ satisfying $\sum_{k=1}^\infty k\,\Psi^{1/2}\left(k\right) < \infty$. Then it holds that
\begin{enumerate}[label={(\textit{\roman*}})]
	\item $\Vert \widehat{R}^\mu - R^\mu\Vert_{HS} = O_\Prob\left(n^{-1/2}\right)$;
	\item $\sup_{j\in \mathbb{N}}\vert \widehat{\theta}_j - \theta_j\vert = O_\Prob\left(n^{-1/2}\right)$.
\end{enumerate}
If moreover the nonzero eigenvalues of $R^\mu$ are all distinct, then
\begin{enumerate}[label={(\textit{\roman*}}), start = 3]
	\item $\Vert\widehat{\psi}_j - \psi_j\Vert = O_\Prob\big(n^{-1/2}\big)$, for each $j$ such that $\theta_j > 0$.
\end{enumerate}
\end{theorem}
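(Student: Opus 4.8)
The plan is to follow the strategy of Bathia et al.~\cite{bathia2010identifying} and reduce everything to the Hilbert--Schmidt convergence $\widehat{R}^\mu \to R^\mu$ at rate $n^{-1/2}$; items \emph{(ii)} and \emph{(iii)} then follow from standard perturbation theory for compact self-adjoint operators. First I would decompose $\widehat{F}_t = F_t + \varepsilon_t$ as in \eqref{eq:empirical-df-equals-F-plus-error} and expand each $\widehat{C}_k$ accordingly, so that $\widehat{C}_k(x,y)$ becomes the sum of a ``signal'' term $(n-p)^{-1}\sum_t (F_t(x)-\widehat{\E}F_0(x))(F_{t+k}(y)-\widehat{\E}F_0(y))$ and cross terms involving $\varepsilon_t$, $\varepsilon_{t+k}$. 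The key point, exactly as in the discussion preceding the theorem, is that Lemma~\ref{thm:epsilon-is-well-behaved} forces the conditional (hence unconditional) expectations of the $\varepsilon$-cross terms to vanish for $k\neq 0$, so these terms are centered and one only needs to control their fluctuations. Under the $\psi$-mixing hypothesis on $\big(\widehat F_t\big)$ with $\sum_k k\,\Psi^{1/2}(k)<\infty$, each such term is an average of a mixing array in $L^2(\mu\otimes\mu)$, and a covariance-summability argument (using the $\psi$-mixing moment inequality to bound $\Cov$-type quantities by $\Psi^{1/2}(k)$ times second moments) gives that each piece is $O_\Prob(n^{-1/2})$ in the relevant $L^2$ norm. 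The boundedness of the $\widehat F_t$ (they are CDFs, bounded by $1$, and $\mu$ is finite) takes care of all moment conditions for free.

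Next I would pass from the kernels to the operators: since $\widehat{R}_\mu - R_\mu = \sum_{k=1}^p \big(\widehat{C}_k \!\star\! \widehat{C}_k - C_k \!\star\! C_k\big)$ where $\star$ denotes the obvious bilinear ``compose in the $z$-variable'' operation, I would write this as a telescoping sum $\widehat{C}_k\!\star\!(\widehat{C}_k - C_k) + (\widehat{C}_k - C_k)\!\star\! C_k$ and bound each summand in $L^2(\mu\otimes\mu)$ by the Cauchy--Schwarz / Hilbert--Schmidt submultiplicativity inequality $\norm{A\!\star\!B}_{HS}\le \norm{A}_{HS}\norm{B}_{HS}$. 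Combined with $\norm{\widehat C_k - C_k}_{HS} = O_\Prob(n^{-1/2})$ from the previous step and $\norm{\widehat C_k}_{HS} = O_\Prob(1)$, this yields $\norm{\widehat R^\mu - R^\mu}_{HS} = O_\Prob(n^{-1/2})$, which is \emph{(i)}. I should be a little careful here about the discrepancy between the normalizing constants $n$ and $n-p$ and about the centering by $\widehat{\E}F_0$ rather than $\E F_0$; Theorem~\ref{thm:LLN-for-Fhat} (whose hypotheses are implied here, since $\psi$-mixing of $\big(\widehat F_t\big)$ gives ergodicity of $(\xi_t)$ and the $n^{-1/2}$ rate for $\widehat{\E}F_0$) absorbs these into the same order.

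For \emph{(ii)}, since $R^\mu$ and $\widehat R^\mu$ are compact self-adjoint, Weyl's inequality gives $\sup_j |\widehat\theta_j - \theta_j| \le \norm{\widehat R^\mu - R^\mu}_{\text{op}} \le \norm{\widehat R^\mu - R^\mu}_{HS}$, so \emph{(ii)} is immediate from \emph{(i)}. For \emph{(iii)}, under the assumption that the nonzero eigenvalues of $R^\mu$ are distinct, each such $\theta_j$ has a spectral gap $\delta_j := \min_{i\neq j}|\theta_i - \theta_j| > 0$, and the Davis--Kahan / Lidskii-type bound (see e.g. Bosq~\cite{bosq2000linear} or the background recalled in Appendix~\ref{sec:bit-theory}) gives $\norm{\widehat\psi_j - \psi_j} \le c\,\delta_j^{-1}\norm{\widehat R^\mu - R^\mu}_{\text{op}}$ up to a choice of sign for $\widehat\psi_j$, whence the $O_\Prob(n^{-1/2})$ rate. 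The main obstacle is the first step --- establishing the $n^{-1/2}$ rate for $\norm{\widehat C_k - C_k}_{HS}$ uniformly enough to survive the passage to $\widehat R^\mu$; this is precisely where the $\psi$-mixing assumption and the summability condition $\sum_k k\,\Psi^{1/2}(k)<\infty$ are used, and it is essentially Theorem~1 of Bathia et al.~\cite{bathia2010identifying} transported to the present setting with $\widehat F_t$ in place of their observed curves, the only genuinely new ingredient being the bias-cancellation of the $\varepsilon$-terms supplied by Lemma~\ref{thm:epsilon-is-well-behaved}.
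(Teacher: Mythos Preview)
Your proposal is correct and follows essentially the same approach as the paper: the paper's own proof is a one-paragraph citation, verifying that conditions C1--C4 of Theorem~1 in Bathia et al.~\cite{bathia2010identifying} are satisfied in the present setting (C2 is automatic by boundedness, C1 and C3 are the stated hypotheses, C4 is item~\ref{thm:epsilon-is-well-behaved-item2} of Lemma~\ref{thm:epsilon-is-well-behaved}) and observing that the argument there carries over to any separable Hilbert space, not just $L^2([a,b])$. You have effectively unpacked what that citation entails --- the decomposition via \eqref{eq:empirical-df-equals-F-plus-error}, the bias cancellation of the $\varepsilon$-terms from Lemma~\ref{thm:epsilon-is-well-behaved}, the $\psi$-mixing covariance bound for the rate on $\widehat{C}_k$, the telescoping to pass to $\widehat{R}^\mu$, and the perturbation bounds for eigenvalues and eigenfunctions --- so your write-up is a faithful expansion of the paper's proof rather than a different argument.
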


The assumptions in Theorem~\ref{thm:LLN-for-psihat} correspond to assumptions C1 and C3 from Theorem~1 in Bathia et al.~\cite{bathia2010identifying}. The requirement that $\theta_1 > \theta_2 > \cdots$ is a simplification and can be relaxed -- see Mas and Menneteau~\cite{mas2003perturbation} for a discussion. {Regarding the $\psi$--mixing assumption, notice that since it imposes restrictions on the process $\big(\widehat{F}_t\big)$ it will likely involve properties of both $F_t$ and $X_\tau$ jointly. It can be shown that, if the latent sequence $\left(\xi_t\right)$ satisfies the $\psi$--mixing assumption of the theorem, and if a condition slightly stronger than cyclic independence is imposed, then the sequence of empirical \textsc{cdf}s $\big(\widehat{F}_t\big)$ will inherit the $\psi$--mixing property. Introducing the definitions required for adequately tackling this topic would be too technical and beyond the scope of the present paper, and therefore we withdraw from the discussion.

\bigskip

\paragraph{Estimating the underlying $\ell^2$ dynamics} The following result provides a description of how $\widehat{W}_{tj}$ fluctuates around $W_{tj}$, and in particular it shows that there is a bound on how far these quantities can be one from another.
\begin{proposition}\label{thm:LLN_etahat}
Let $\left(\xi_t,X_\tau\right)$ be a cyclic--independent conjugate process. Then, for all $j\in\mathbb{N}$ and all $1\leq t \leq n$, it holds that
\begin{equation}\label{eq:eta-hat-asymptotics}
\widehat{W}_{tj} = W_{tj} + \langle \varepsilon_t,\psi_j\rangle + \widehat{\rho}_{j}
\end{equation}
almost surely, where $\widehat{\rho}_j$ is a random variable satisfying $\abs{\widehat{\rho}_j} \leq \Vert \widehat{\E}F_0 - \E F_0 \Vert + 2 \vert\mu\vert^{1/2} \Vert \widehat{\psi}_j - \psi_j \Vert$. Moreover, the innovations $\big(\langle\varepsilon_t, \psi_j\rangle:\,j\in\mathbb{N},\,t=0,1,\dots\big)$ are such that
\begin{enumerate}[label={(\textit{\roman*}})]
	\item $\E\left[\langle\varepsilon_t,\psi_j\rangle\,\vert\,\Xi\right] = 0$ for all $t$ and $j$.
	\item $\E\left[\langle\varepsilon_t,\psi_j\rangle\cdot\langle\varepsilon_s,\psi_i\rangle\,\vert\,\Xi\right] = 0$ for all $j,i$ and all $t\neq s$.
	\item $\E\left[W_{tj}\cdot\langle\varepsilon_s,\psi_i\rangle\,\vert\,\Xi\right] = 0$ for all $t,s$ and all $j,i$.
	\item $\vert\langle\varepsilon_t,\psi_j\rangle\vert \leq \Vert\widehat{F}_t - F_t\Vert$ for all $t$ and $j$.
\end{enumerate}
\end{proposition}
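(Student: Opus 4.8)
The plan is to establish the pathwise decomposition \eqref{eq:eta-hat-asymptotics} first --- this part is purely algebraic and uses no probabilistic hypothesis --- then bound $\widehat\rho_j$ by elementary norm estimates, and finally deduce properties (i)--(iv) of the innovations $\langle\varepsilon_t,\psi_j\rangle$ from Lemma~\ref{thm:epsilon-is-well-behaved} via a conditional Fubini argument. For the decomposition, I would start from $\widehat W_{tj} = \langle \widehat F_t - \widehat{\E}F_0, \widehat\psi_j\rangle$ and $W_{tj} = \langle F_t - \E F_0, \psi_j\rangle$, insert $\widehat F_t = F_t + \varepsilon_t$ via the tautology \eqref{eq:empirical-df-equals-F-plus-error}, and write $\widehat{\E}F_0 = \E F_0 + (\widehat{\E}F_0 - \E F_0)$ and $\widehat\psi_j = \psi_j + (\widehat\psi_j - \psi_j)$. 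Expanding $\langle\cdot,\cdot\rangle$ bilinearly and collecting terms isolates $W_{tj}$ and $\langle\varepsilon_t,\psi_j\rangle$ and leaves the remainder
\[
\widehat\rho_j \;=\; \langle \widehat F_t - \widehat{\E}F_0,\ \widehat\psi_j - \psi_j\rangle \;-\; \langle \widehat{\E}F_0 - \E F_0,\ \psi_j\rangle .
\]
This is a genuine identity of real numbers for every $\omega$, so the ``almost surely'' in the statement is free; note that $\widehat\rho_j$ secretly depends on $t$ as well, but its bound will not.

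For that bound, the only elementary fact needed is that the difference of two functions $\mathbb{R}\to[0,1]$ has pointwise absolute value at most $1$, hence $L^2(\mu)$-norm at most $\abs{\mu}^{1/2}=\mu(\mathbb{R})^{1/2}$. Since $\widehat F_t$, $F_t$, $\E F_0$ and $\widehat{\E}F_0$ (the latter an average of \textsc{cdf}s) are all $[0,1]$-valued, this applies in particular to $\widehat F_t - \widehat{\E}F_0$; applying Cauchy--Schwarz to the two summands of $\widehat\rho_j$ and using $\norm{\psi_j}=1$ then yields $\abs{\widehat\rho_j}\le \norm{\widehat{\E}F_0-\E F_0}+\abs{\mu}^{1/2}\norm{\widehat\psi_j-\psi_j}$, which a fortiori gives the stated bound (one may of course also obtain the factor $2$ bluntly via the triangle inequality $\norm{\widehat F_t-\widehat{\E}F_0}\le\norm{\widehat F_t-\E F_0}+\norm{\widehat{\E}F_0-\E F_0}$).

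Property (iv) is immediate from Cauchy--Schwarz: $\abs{\langle\varepsilon_t,\psi_j\rangle}\le\norm{\varepsilon_t}\norm{\psi_j}=\norm{\widehat F_t-F_t}$. For (i) and (ii) I would write $\langle\varepsilon_t,\psi_j\rangle=\int\varepsilon_t(x)\psi_j(x)\,\mu(dx)$ and $\langle\varepsilon_t,\psi_j\rangle\langle\varepsilon_s,\psi_i\rangle=\iint\varepsilon_t(x)\varepsilon_s(y)\psi_j(x)\psi_i(y)\,\mu(dx)\mu(dy)$; since $\abs{\varepsilon_t}\le1$, $\mu$ is finite and $\psi_j,\psi_i\in L^1(\mu)$, the conditional version of Fubini's theorem permits bringing $\E[\,\cdot\mid\Xi]$ inside the integrals, after which item~\ref{thm:epsilon-is-well-behaved-item1} of Lemma~\ref{thm:epsilon-is-well-behaved} gives (i) and item~\ref{thm:epsilon-is-well-behaved-item3} of the same lemma gives (ii). For (iii), note that $W_{tj}=\langle F_t-\E F_0,\psi_j\rangle$ is bounded and $\Xi$-measurable (each $F_t(x)=\xi_t(-\infty,x]$ is $\Xi$-measurable), so it factors out of the conditional expectation: $\E[W_{tj}\langle\varepsilon_s,\psi_i\rangle\mid\Xi]=W_{tj}\,\E[\langle\varepsilon_s,\psi_i\rangle\mid\Xi]=0$ by (i).

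The only genuinely delicate point is the conditional Fubini interchange, which needs joint measurability of $(\omega,x)\mapsto\varepsilon_t^\omega(x)$ --- inherited from the pointwise representations of $\widehat F_t$ and $F_t$ already used in establishing Lemma~\ref{thm:epsilon-is-well-behaved} --- together with the uniform bound $\abs{\varepsilon_t}\le1$ and $\mu(\mathbb{R})<\infty$. If one prefers to avoid pointwise statements altogether, the alternative is to use that bounded linear functionals commute with the Bochner conditional expectation and that item~\ref{thm:epsilon-is-well-behaved-item1} of Lemma~\ref{thm:epsilon-is-well-behaved} forces $\E[\varepsilon_t\mid\Xi]=0$ as an element of $L^2(\mu)$, whence $\E[\langle\varepsilon_t,\psi_j\rangle\mid\Xi]=\langle\E[\varepsilon_t\mid\Xi],\psi_j\rangle=0$, with the cross terms handled analogously through the $L^2(\mu\otimes\mu)$ reformulation of item~\ref{thm:epsilon-is-well-behaved-item3}. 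Beyond this, the proof is a matter of careful bilinear bookkeeping in Step~1, ensuring that it is precisely $\langle\varepsilon_t,\psi_j\rangle$, and not $\langle\varepsilon_t,\widehat\psi_j\rangle$, that is split off.
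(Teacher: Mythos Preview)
Your proposal is correct and follows essentially the same route as the paper: an algebraic add-and-subtract decomposition to isolate $W_{tj}$ and $\langle\varepsilon_t,\psi_j\rangle$, Cauchy--Schwarz for the bound on $\widehat\rho_j$, and Lemma~\ref{thm:epsilon-is-well-behaved} for items (i)--(iv). Your write-up is in fact more detailed than the paper's (which leaves the Fubini step implicit), and your observation that grouping the remainder as $\langle\widehat F_t-\widehat{\E}F_0,\widehat\psi_j-\psi_j\rangle$ already gives the sharper constant $\abs{\mu}^{1/2}$ rather than $2\abs{\mu}^{1/2}$ is correct; the paper obtains the factor $2$ by bounding $\Vert\widehat F_t\Vert$ and $\Vert\widehat{\E}F_0\Vert$ separately. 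One small point the paper flags that you omit is the sign convention for $\widehat\psi_j$ versus $\psi_j$, but this is cosmetic.
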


\begin{corollary}\label{thm:LLN2_etahat}
	In the conditions of Theorems~\ref{thm:LLN-for-Fhat} and \ref{thm:LLN-for-psihat}, it holds that
	\begin{enumerate}[label={(\textit{\roman*}})]
	\item $\E\big[\widehat{W}_{tj}\,\vert\, \Xi\big] = W_{tj} + O_\Prob\left(n^{-1/2}\right)$.
	\item $\vert\widehat{W}_{tj} - W_{tj}\vert \leq \Vert \widehat{F}_t - F_t \Vert + O_\Prob\left(n^{-1/2}\right)$.\label{thm:LLN2_etahat-item-ii}
\end{enumerate}
\end{corollary}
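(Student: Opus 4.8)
The plan is to obtain both assertions from the pathwise identity supplied by Proposition~\ref{thm:LLN_etahat},
\[
\widehat{W}_{tj} = W_{tj} + \langle\varepsilon_t,\psi_j\rangle + \widehat{\rho}_j,\qquad \abs{\widehat{\rho}_j}\leq \norm{\widehat{\E}F_0 - \E F_0} + 2\abs{\mu}^{1/2}\norm{\widehat{\psi}_j - \psi_j},
\]
after feeding in the rates already on hand. For a fixed $j$ with $\theta_j>0$, Theorem~\ref{thm:LLN-for-psihat}\,(\textit{iii}) gives $\norm{\widehat{\psi}_j - \psi_j} = O_\Prob\left(n^{-1/2}\right)$ and Theorem~\ref{thm:LLN-for-Fhat} gives $\norm{\widehat{\E}F_0 - \E F_0} = O_\Prob\left(n^{-1/2}\right)$ (also a direct consequence of the $\psi$-mixing of $\big(\widehat{F}_t\big)$, which forces this rate for the Hilbertian average $\widehat{\E}F_0$). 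Hence the bound above yields $\abs{\widehat{\rho}_j} = O_\Prob\left(n^{-1/2}\right)$, uniformly over $1\leq t\leq n$ since it does not involve $t$.

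Item~\ref{thm:LLN2_etahat-item-ii} then follows at once: by the triangle inequality and item~(\textit{iv}) of Proposition~\ref{thm:LLN_etahat},
\[
\abs{\widehat{W}_{tj} - W_{tj}} \leq \abs{\langle\varepsilon_t,\psi_j\rangle} + \abs{\widehat{\rho}_j} \leq \norm{\widehat{F}_t - F_t} + \abs{\widehat{\rho}_j} = \norm{\widehat{F}_t - F_t} + O_\Prob\left(n^{-1/2}\right).
\]
For item~(\textit{i}) I would first record that $W_{tj} = \langle F_t - \E F_0,\psi_j\rangle$ is $\Xi$-measurable: $F_t$ is a measurable function of $\xi_t$, while $\E F_0$ and the eigenfunction $\psi_j$ of the \emph{deterministic} operator $R^\mu$ are non-random. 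Conditioning the identity on $\Xi$ and using item~(\textit{i}) of Proposition~\ref{thm:LLN_etahat}, $\E\left[\langle\varepsilon_t,\psi_j\rangle\,\vert\,\Xi\right]=0$, I get $\E\left[\widehat{W}_{tj}\,\vert\,\Xi\right] = W_{tj} + \E\left[\widehat{\rho}_j\,\vert\,\Xi\right]$; since $\widehat{\rho}_j$ is bounded and
\[
\abs{\E\left[\widehat{\rho}_j\,\vert\,\Xi\right]} \leq \E\left[\abs{\widehat{\rho}_j}\,\vert\,\Xi\right] \leq \E\left[\norm{\widehat{\E}F_0 - \E F_0}\,\vert\,\Xi\right] + 2\abs{\mu}^{1/2}\,\E\left[\norm{\widehat{\psi}_j - \psi_j}\,\vert\,\Xi\right],
\]
it is enough to show each of the two conditional expectations on the right is $O_\Prob\left(n^{-1/2}\right)$.

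This last step is the one genuinely delicate point, because an $O_\Prob\left(n^{-1/2}\right)$ random variable need not have (conditional) expectation of order $n^{-1/2}$, so the $O_\Prob$-statements of Theorems~\ref{thm:LLN-for-Fhat} and \ref{thm:LLN-for-psihat} cannot be quoted verbatim here. My preferred remedy is to upgrade those two rates to mean square, which costs nothing under the summable $\psi$-mixing hypothesis: a second-moment estimate for the average $\widehat{\E}F_0 = n^{-1}\sum_t\widehat{F}_t$ gives $\E\norm{\widehat{\E}F_0 - \E F_0}^2 = O\left(n^{-1}\right)$, and $\E\Vert\widehat{R}^\mu - R^\mu\Vert_{HS}^2 = O\left(n^{-1}\right)$ together with an eigenprojection perturbation bound (cf.\ Bathia et al.~\cite{bathia2010identifying}, Mas and Menneteau~\cite{mas2003perturbation}) gives $\E\norm{\widehat{\psi}_j - \psi_j}^2 = O\left(n^{-1}\right)$. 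Conditional Jensen then gives $\E\left[\norm{\widehat{\psi}_j - \psi_j}\,\vert\,\Xi\right]\leq\big(\E\left[\norm{\widehat{\psi}_j - \psi_j}^2\,\vert\,\Xi\right]\big)^{1/2}$, and since the conditional second moment has unconditional expectation $O\left(n^{-1}\right)$ it is $O_\Prob\left(n^{-1}\right)$ by Markov, so its square root is $O_\Prob\left(n^{-1/2}\right)$; likewise for $\widehat{\E}F_0$. Assembling these gives $\E\left[\widehat{\rho}_j\,\vert\,\Xi\right] = O_\Prob\left(n^{-1/2}\right)$, i.e.\ item~(\textit{i}). An alternative that invokes no rate beyond those in the text would instead compute the $\Xi$-conditional second moments of $\widehat{\E}F_0-\E F_0$ and $\widehat{R}^\mu-R^\mu$ directly, using that $\widehat{\E}F_0-\E F_0 = n^{-1}\sum_t\varepsilon_t$ is an average of conditionally centered, conditionally cross-cycle-uncorrelated increments (Lemma~\ref{thm:epsilon-is-well-behaved}) and that $\widehat{R}^\mu$ is a fixed quadratic in the $\widehat{C}_k$, each built from products of the $\widehat{F}_t = F_t+\varepsilon_t$ with the $F_t$'s being $\Xi$-measurable.
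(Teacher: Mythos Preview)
The paper's own proof reads, in its entirety: ``The Corollary is an immediate consequence of the stated assumptions. The details are left as an exercise.'' Your approach---feeding the rates from Theorems~\ref{thm:LLN-for-Fhat} and~\ref{thm:LLN-for-psihat} into the decomposition of Proposition~\ref{thm:LLN_etahat}---is precisely the intended one, and your argument for item~\ref{thm:LLN2_etahat-item-ii} is complete.

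For item~(\textit{i}) you correctly flag a subtlety the paper glosses over: $\widehat{\rho}_j = O_\Prob(n^{-1/2})$ does not automatically yield $\E\big[\widehat{\rho}_j\,\vert\,\Xi\big] = O_\Prob(n^{-1/2})$. Your $L^2$-upgrade remedy is sound in spirit, but two small points deserve care. First, in your alternative paragraph, $\widehat{\E}F_0 - \E F_0 = n^{-1}\sum_t\varepsilon_t + n^{-1}\sum_t(F_t - \E F_0)$, not just the first piece; the second summand is $\Xi$-measurable, so its conditional expectation equals itself and must be controlled via the mixing/ergodicity assumption on $(\xi_t)$ rather than via Lemma~\ref{thm:epsilon-is-well-behaved}. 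Second, the eigenprojection perturbation inequality $\Vert\widehat{\psi}_j - \psi_j\Vert\leq c_j\Vert\widehat{R}^\mu - R^\mu\Vert_{HS}$ typically holds only on a high-probability event (where the empirical eigenvalue gap is bounded away from zero), so to turn $\E\Vert\widehat{R}^\mu - R^\mu\Vert_{HS}^2 = O(n^{-1})$ into $\E\Vert\widehat{\psi}_j - \psi_j\Vert^2 = O(n^{-1})$ you should combine that inequality with the trivial bound $\Vert\widehat{\psi}_j - \psi_j\Vert\leq 2$ on the complementary event. With these adjustments your argument goes through, and is in fact more careful than what the paper records.
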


Proposition~\ref{thm:LLN_etahat} and Corollary~\ref{thm:LLN2_etahat} together say that, for each $j$, the sample paths of $\big(\widehat{W}_{tj}\big)$ and $\big(W_{tj}\big)$ are essentially tied together. Notice, however, that asymptotics on $n$ alone is not sufficient to provide a small bound on $\vert\widehat{W}_{tj} - W_{tj}\vert$: one should also control the size of the innovation sequence $\left(\langle \varepsilon_t,\psi_j\rangle\right)$, via the term $\Vert \widehat{F}_t - F_t\Vert$ which depends essentially on intra--cycle aspects of $\left(X_\tau:\,\tau\in \left[t,t+1\right)\right)$, particularly on the sample size, $q_t$.

In fact, ideally one would be interested in bounding the quantity $\max_{1\leq t\leq n}\vert \widehat{W}_{tj} - W_{tj}\vert$ by a small constant, that is, to make the sample paths of $\big(\widehat{W}_{tj}\big)$ and $\big(W_{tj}\big)$, $1\leq t\leq n$, uniformly close one to another. We now show that this is not always attainable, although it may be possible to achieve such an approximation `for a certain amount of time'. For convenience let $a_r^s\left(\delta\right) := \Prob\big(\max_{r\leq t\leq s}\Vert \widehat{F}_t - F_t\Vert > \delta\big)$, where $1\leq r \leq s\leq n$ and $\delta >0$; it is straightforward to show that, for each $j\in\mathbb{N}$ and arbitrary $\epsilon>0$, we have $\Prob\big(\max_{r\leq t\leq s}\vert \widehat{W}_{tj} - W_{tj}\vert > \delta + \epsilon\big) \leq a_r^s\left(\delta\right) + \epsilon$, provided the sample size is large enough. The issue thus becomes to make $a_r^s\left(\delta\right)$ as small as possible, with $\delta$ as small as possible. We shall discuss the size of $a_r^s\left(\delta\right)$ in an ideal setting, for the sake of exposition. For simplicity, let $\abs{\mu} = 1$ so that $\Vert\widehat{F}_t - F_t\Vert \leq \sup_x\vert \widehat{F}_t\left(x\right) - F_t\left(x\right)\vert$. Assume that, within each cycle $t$, one can devise a sampling scheme such that, conditional on $\Xi$, an iid sample $\left(X_{1,t}, X_{2,t},\dots, X_{q_t, t}\right)$ of size $q_t$ from the distribution $\xi_t$ is attainable. Then, by the Dvoretzky–Kiefer-Wolfowitz (DKW) inequality we have, for each $t$ and each $\delta > 0$, that $\Prob\left(\big\Vert \widehat{F}_t - F_t\big\Vert > \delta\,\big\vert\,\Xi\right) \leq 2\exp\left(-2q_t\delta^2\right)$ almost surely. By taking expectations we see that $a_t^t\left(\delta\right)$ can be made small if $q_t$ can be taken large. This settles the question of approximating $W_{tj}$ by $\widehat{W}_{tj}$ individually. Assuming cyclic--independence in turn yields, by a straightforward calculation and the Bernoulli inequality, the upper bound $a_r^s\left(\delta\right) \leq 2\left(s-r+1\right)\exp\left(-2q_*\delta^2\right)$, where $q_* := \min_{1\leq t\leq n} q_t$. This shows that there is an interplay between the precision, $\delta + \epsilon$, the intra--cycle sample sizes (as measured by $q_*$), and the possibility of approximating the sample path of $\left(W_{tj}\right)$ uniformly for $t=r,\dots,s$ with such precision. On the one hand, a large sample size $n$ is desirable as a means to bound (in probability) the terms $\Vert \widehat{\E}F_0 - \E F_0\Vert$ and $\Vert\widehat{\psi}_j - \psi_j\Vert$ by $\epsilon$. Indeed, the constant $\epsilon$ can be taken arbitrarily small as long as the sample size is sufficiently large. On the other hand, since $q_*$ is bounded as $n$ increases, the possibility of approximating the latent $\left(W_{tj}\right)$ uniformly for $t=1,\dots,n$ becomes spoiled as $n$ increases: typically $a_1^n\left(\delta\right)$ cannot be made arbitrarily small for small values of $\delta$. In applied work, if the interest lies in studying properties of the latter time series, then a careful choice of $\delta$, and of the indices $r$ and $s$, may ensure that the sample paths of $\left(W_{tj}:\,t=r,\dots,s\right)$ can be approximated by $\big(\widehat{W}_{tj}:\,t=r,\dots,s\big)$ with precision $\delta + \epsilon$ and with a large probability. The above exposition can be generalized to a setting where the processes $\left(X_\tau\vert\Xi:\,\tau\in\left[t,t+1\right)\right)$ display a stronger dependence structure, by appealing to generalizations of the DKW inequality as discussed for instance in Dedecker and Merlev\`{e}de~\cite{dedecker2007empirical} and Kontorovich and Weiss~\cite{kontorovich2014uniform}.

\bigskip

\paragraph{Estimation of $F_t$} We now discuss the problem of proposing an estimator of $F_t$ (other than $\widehat{F}_t$) which takes into account the presented methodology. First, there is the issue of estimating the dimension $d$, which is unknown to the statistician. In that regard, we have the following.
\begin{proposition}\label{thm:d-hat-consistency}
	Assume $d < \infty$, and let $\widehat{d}:=\#\big\{j:\,\widehat{\theta}_j\geq a_n\big\}$. If $a_n\rightarrow 0$ and $na_n^2\rightarrow\infty$, then
	\begin{equation*}
		\Prob\big(\widehat{d}\neq d\big)\rightarrow 0
	\end{equation*}
	as $n\rightarrow\infty$.
\end{proposition}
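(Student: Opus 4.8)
The plan is to turn the event $\{\widehat{d}\neq d\}$ into a statement about the uniform closeness of the estimated eigenvalues $\widehat{\theta}_j$ to the true ones $\theta_j$, and then to read off the conclusion from the eigenvalue-consistency rates already proved. First I would record the bookkeeping: under Assumption~R the operators $R^\mu$ and $C_0^\mu$ are range--equivalent, so $\overline{\Ran\left(R^\mu\right)}=\overline{\Ran\left(C_0^\mu\right)}$; since $d=\rank\left(C_0^\mu\right)<\infty$ this common closed subspace is $d$--dimensional, whence $d'=\rank\left(R^\mu\right)=d$. Consequently the eigenvalues of $R^\mu$ satisfy $\theta_1\geq\cdots\geq\theta_d>0$ and $\theta_j=0$ for all $j>d$. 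Set $\theta_{\min}:=\theta_d>0$.

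Next, the combinatorial reduction. Because the $\widehat{\theta}_j$ are listed in non--increasing order, $\widehat{d}=\max\{j:\,\widehat{\theta}_j\geq a_n\}$, so $\{\widehat{d}\geq d\}=\{\widehat{\theta}_d\geq a_n\}$ and $\{\widehat{d}\leq d\}=\{\widehat{\theta}_{d+1}< a_n\}$; hence $\{\widehat{d}\neq d\}=A_n\cup B_n$ with $A_n:=\{\widehat{\theta}_d< a_n\}$ (under-selection) and $B_n:=\{\widehat{\theta}_{d+1}\geq a_n\}$ (over-selection). It thus suffices to show $\Prob\left(A_n\right)\to 0$ and $\Prob\left(B_n\right)\to 0$. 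For $A_n$: on that event $\sup_{j}\vert\widehat{\theta}_j-\theta_j\vert\geq\theta_d-\widehat{\theta}_d>\theta_{\min}-a_n$, and since $a_n\to 0$ we have $\theta_{\min}-a_n\geq\theta_{\min}/2$ for $n$ large, so $A_n\subseteq\{\sup_j\vert\widehat{\theta}_j-\theta_j\vert\geq\theta_{\min}/2\}$, whose probability tends to $0$ by the consistency $\sup_j\vert\widehat{\theta}_j-\theta_j\vert=o_\Prob\left(1\right)$ of Theorems~\ref{thm:LLN-for-psihat-no-rate}--\ref{thm:LLN-for-psihat}. For $B_n$: since $\theta_{d+1}=0$ and $\widehat{R}^\mu$ is positive, on $B_n$ we have $\vert\widehat{\theta}_{d+1}-\theta_{d+1}\vert=\widehat{\theta}_{d+1}\geq a_n$, so $B_n\subseteq\{\sqrt{n}\sup_j\vert\widehat{\theta}_j-\theta_j\vert\geq\sqrt{n}\,a_n\}$. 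Given $\epsilon>0$, the rate $\sup_j\vert\widehat{\theta}_j-\theta_j\vert=O_\Prob\left(n^{-1/2}\right)$ of Theorem~\ref{thm:LLN-for-psihat} furnishes $M_\epsilon$ with $\limsup_n\Prob\left(\sqrt{n}\sup_j\vert\widehat{\theta}_j-\theta_j\vert>M_\epsilon\right)\leq\epsilon$; and $na_n^2\to\infty$ means $\sqrt{n}\,a_n\to\infty$, so $\sqrt{n}\,a_n>M_\epsilon$ eventually, giving $\limsup_n\Prob\left(B_n\right)\leq\epsilon$. Letting $\epsilon\downarrow 0$ yields $\Prob\left(B_n\right)\to 0$, and adding the two bounds completes the argument.

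There is no genuine analytic obstacle here — the proof is a routine eigenvalue-thresholding (perturbation) argument — but the point that needs care is the matching of the threshold condition to the available rate: the over-selection bound $\Prob\left(B_n\right)\to 0$ really does require the $\sqrt{n}$-rate $\sup_j\vert\widehat{\theta}_j-\theta_j\vert=O_\Prob\left(n^{-1/2}\right)$, since the bare consistency of Theorem~\ref{thm:LLN-for-psihat-no-rate} cannot control $\Prob\left(B_n\right)$ against a threshold that is only required to satisfy $na_n^2\to\infty$. Accordingly the Proposition is implicitly understood to hold under the hypotheses of Theorem~\ref{thm:LLN-for-psihat} (equivalently, conditions C1 and C3 of Bathia et al.~\cite{bathia2010identifying}); I would state this at the outset. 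Note that no distinctness of eigenvalues is needed, as only eigenvalue — not eigenfunction — consistency enters.
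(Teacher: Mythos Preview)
Your argument is correct and is precisely the standard eigenvalue--thresholding argument; the paper does not give its own proof but simply refers to Theorem~3 and Appendix~B of Bathia et al.~\cite{bathia2010identifying}, whose reasoning is exactly the decomposition $\{\widehat{d}\neq d\}=\{\widehat{\theta}_d<a_n\}\cup\{\widehat{\theta}_{d+1}\geq a_n\}$ together with the $O_\Prob(n^{-1/2})$ eigenvalue rate that you reproduce. Your observation that the $\sqrt{n}$-rate (hence the hypotheses of Theorem~\ref{thm:LLN-for-psihat}) is genuinely needed for the over-selection bound, and that eigenvalue distinctness is not required, is accurate and worth stating explicitly.
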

Now let $\widehat{d}$ be a consistent estimator of $d$, and define the estimator
\begin{equation}\label{eq:F-tilde-definition}
\widetilde{F}_t:=\widehat{\E}F_0 + \sum_{j=1}^{\widehat{d}} \widehat{W}_{tj}\widehat{\psi}_j.
\end{equation}
Notice that it may happen that $\widetilde{F}_t$ is not a \textsc{cdf} (it will be if we set $\widehat{d}=d_n$). That is, even though $\widetilde{F}_t$ may be close to $F_t$ in the $L^2\left(\mu\right)$ norm, nothing grants that it will be nondecreasing or have its values strictly between 0 and 1. This is not a major issue, however. For example, in the context of nonparametric density estimation it is common to permit estimators which  in finite samples are not densities but which have good asymptotic properties. For ease of exposition, we shall consider the case where it is known that $d = 1$. This can be easily generalized to any $d < \infty$.
\begin{corollary}\label{thm:F-tilde-asymptotics}
	Let $d = \widehat{d} = 1$ and let $\widetilde{F}_t$ be defined as in \eqref{eq:F-tilde-definition}. In the conditions of Theorems~\ref{thm:LLN-for-Fhat} and \ref{thm:LLN-for-psihat}, it holds that
	\begin{equation}\label{eq:F-tilde-asymptotics}
		\widetilde{F}_t = F_t + \langle\varepsilon_t, \psi_1\rangle\psi_1 + O_\Prob\big(n^{-1/2}\big)
	\end{equation}
	in $L^2\left(\mu\right)$.
\end{corollary}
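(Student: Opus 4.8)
\emph{Proof proposal.} The plan is to reduce the problem to the one-term expansion of $F_t$ and then substitute the fluctuation identity of Proposition~\ref{thm:LLN_etahat} together with the $n^{-1/2}$ rates coming from Theorems~\ref{thm:LLN-for-Fhat} and \ref{thm:LLN-for-psihat}. First I would observe that, since $d=1$ and Assumption~R is in force, $d' = \rank\left(R^\mu\right) = \rank\left(C_0^\mu\right) = 1$; consequently the expansion \eqref{eq:F_t-spectral-representation} of Theorem~\ref{thm:spectral-representation-F_t-Rmu} collapses to $F_t = \E F_0 + W_{t1}\psi_1$ almost surely in $L^2\left(\mu\right)$. (For $j\geq 2$ one has $\theta_j=0$, so $\psi_j$ is orthogonal to $\overline{\Ran\left(R^\mu\right)} = \overline{\Ran\left(C_0^\mu\right)}$, which contains $F_t-\E F_0$ a.s.; hence $W_{tj} = \langle F_t-\E F_0,\psi_j\rangle = 0$ a.s.) Since also $\widehat d = d = 1$, definition \eqref{eq:F-tilde-definition} reads $\widetilde F_t = \widehat{\E}F_0 + \widehat W_{t1}\widehat\psi_1$, and therefore
\[
\widetilde F_t - F_t \;=\; \bigl(\widehat{\E}F_0 - \E F_0\bigr) \;+\; \bigl(\widehat W_{t1} - W_{t1}\bigr)\psi_1 \;+\; \widehat W_{t1}\bigl(\widehat\psi_1 - \psi_1\bigr).
\]

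Next I would control each summand. For the first, the $\psi$-mixing assumption of Theorem~\ref{thm:LLN-for-psihat} on $\bigl(\widehat F_t\bigr)$, combined with $\E\widehat F_t = \E F_t + \E\varepsilon_t = \E F_0$ (Lemma~\ref{thm:hilbert-embedding-F_t} and Lemma~\ref{thm:epsilon-is-well-behaved}), yields $\norm{\widehat{\E}F_0 - \E F_0} = \norm{n^{-1}\sum_{t=1}^n(\widehat F_t-\E F_0)} = O_\Prob\!\left(n^{-1/2}\right)$; this is precisely the content of the last assertion of Theorem~\ref{thm:LLN-for-Fhat}. Theorem~\ref{thm:LLN-for-psihat} gives $\norm{\widehat\psi_1 - \psi_1} = O_\Prob\!\left(n^{-1/2}\right)$, once the sign of $\widehat\psi_1$ is chosen to match that of $\psi_1$ (eigenfunctions being determined only up to sign). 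Finally, Proposition~\ref{thm:LLN_etahat} with $j=1$ gives $\widehat W_{t1} = W_{t1} + \langle\varepsilon_t,\psi_1\rangle + \widehat\rho_1$ a.s., with $\abs{\widehat\rho_1} \leq \norm{\widehat{\E}F_0 - \E F_0} + 2\abs{\mu}^{1/2}\norm{\widehat\psi_1 - \psi_1} = O_\Prob\!\left(n^{-1/2}\right)$; note this bound is independent of $t$.

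Finally I would assemble the pieces. The second summand equals $(\widehat W_{t1}-W_{t1})\psi_1 = \langle\varepsilon_t,\psi_1\rangle\psi_1 + \widehat\rho_1\psi_1$, and $\norm{\widehat\rho_1\psi_1} = \abs{\widehat\rho_1} = O_\Prob\!\left(n^{-1/2}\right)$ since $\norm{\psi_1}=1$. For the third summand, because $F_t$, $\widehat F_t$ and $\E F_0$ take values in $[0,1]$ we have $\abs{W_{t1}}\leq\norm{F_t-\E F_0}\leq 2\abs{\mu}^{1/2}$ and $\abs{\langle\varepsilon_t,\psi_1\rangle}\leq\norm{\varepsilon_t}\leq 2\abs{\mu}^{1/2}$, so $\abs{\widehat W_{t1}} = O_\Prob(1)$ (indeed a.s.\ bounded up to the $O_\Prob(n^{-1/2})$ remainder, uniformly in $t$); hence $\norm{\widehat W_{t1}(\widehat\psi_1-\psi_1)} = O_\Prob(1)\cdot O_\Prob\!\left(n^{-1/2}\right) = O_\Prob\!\left(n^{-1/2}\right)$. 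Adding the three contributions yields \eqref{eq:F-tilde-asymptotics} in $L^2\left(\mu\right)$; the case of general finite $d$ is identical, with $\sum_{j=1}^{d}$ in place of the single term. I expect the only mildly delicate points to be bookkeeping ones — fixing the sign convention for $\widehat\psi_1$, and extracting the genuine $O_\Prob\!\left(n^{-1/2}\right)$ rate for $\widehat{\E}F_0$ from the mixing hypothesis rather than merely the $o_\Prob(1)$ consistency — while the rest is the triangle inequality and the uniform boundedness of distribution functions in $L^2\left(\mu\right)$.
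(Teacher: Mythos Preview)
Your proposal is correct and is exactly the kind of argument the paper has in mind: the paper itself does not spell out a proof, stating only that ``the corollary is a direct consequence of the stated hypotheses and is left as an exercise for the reader.'' Your decomposition $\widetilde F_t - F_t = (\widehat{\E}F_0 - \E F_0) + (\widehat W_{t1}-W_{t1})\psi_1 + \widehat W_{t1}(\widehat\psi_1-\psi_1)$, followed by term-by-term control via Theorem~\ref{thm:LLN-for-Fhat}, Theorem~\ref{thm:LLN-for-psihat}, and Proposition~\ref{thm:LLN_etahat}, is precisely the intended route; the only caveat is the one you already flag---that the $O_\Prob(n^{-1/2})$ rate for $\widehat{\E}F_0$ requires the \emph{additional} hypothesis in the last clause of Theorem~\ref{thm:LLN-for-Fhat} (or a separate mixing argument), which the phrase ``in the conditions of Theorems~\ref{thm:LLN-for-Fhat} and~\ref{thm:LLN-for-psihat}'' is evidently meant to include.
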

Expression \eqref{eq:F-tilde-asymptotics} should be contrasted with \eqref{eq:empirical-df-equals-F-plus-error}: whereas $\Vert\widehat{F}_t - F_t\Vert = \Vert\varepsilon_t\Vert$, we have that $\Vert\widetilde{F}_t - F_t\Vert \leq \vert\langle\varepsilon_t, \psi_1\rangle\vert + O_\Prob\big(n^{-1/2}\big)$, and since $\Vert\varepsilon_t\Vert^2 = \sum_{j=1}^\infty \vert\langle \varepsilon_t,\psi_j\rangle\vert^2$, one will typically have $\vert\langle\varepsilon_t, \psi_1\rangle\vert < \Vert\varepsilon_t\Vert$ unless we find ourselves in the very unlikely situation where $\varepsilon_t$ is orthogonal to all the $\psi_j$ with $j\geq2$. This means that, as long as the sample size $n$ is sufficiently large, the estimator $\widetilde{F}_t$ represents an improvement over $\widehat{F}_t$ (in the $L^2\left(\mu\right)$ sense). As seen in the simulation study below, such improvements may be substantial in certain scenarios.

\section{An example}\label{sec:example}

The aim of this section is to construct an illustrative example rather than providing a thorough simulation study. Here $\mu$ is Lebesgue measure restricted to the interval $I=\left[-1,1\right]$. Write $\psi_1\equiv \psi$, and likewise $W_{t1}\equiv W_t$. Assume $W_0, W_1,\dots $ is a stationary \textsc{ar}(1) process, $W_t = \alpha W_{t-1} + u_t$, where $\left(u_t\right)$ is some centered iid real sequence and $\abs{\alpha}<1$. Let $H$ be a fixed \textsc{cdf} concentrated on $I$ with $\int x\, dH\left(x\right)=0$, and let $\psi$ be some bounded function on $\left[-1,1\right]$ with $\psi\left(-1\right)=\psi\left(1\right)=0$. Now write $F_t\left(x\right) = H\left(x\right) + W_t\psi\left(x\right)$. A straightforward calculation yields $F_t\left(x\right)=\left(1-\alpha\right)H\left(x\right) + \alpha F_{t-1}\left(x\right) + u_t \psi\left(x\right)$, that is, $\left(F_t\right)$ is a linear process as well! Clearly, some restrictions on $\psi$ and on the process $\left(W_t\right)$ must be imposed to ensure that the $F_t$ are indeed \textsc{cdf}s, but we relegate the details on how to achieve this to the simulation exercise below. Assuming further that $\int x\,\psi\left(dx\right) = 0$ we obtain $\int x \,F_t\left(dx\right)=0$, and letting $\xi_t$ denote the measure corresponding to $F_t$, one sees that any process $\left(X_\tau:\,\tau\geq 0 \right)$ satisfying \eqref{eq:weakly-conjugate-model}, will be such that $\E\left[X_\tau \vert \xi_0, \xi_1,\dots\right]=0$. Also it is clear that $\E\xi_0$ is the measure corresponding to $H$. In the notation of the previous sections, the operator $C_0^\mu$ is seen to be of rank $d=1$, with $\psi$ being an eigenfunction associated to the eigenvalue $\lambda = \norm{\psi}^2\E W_0^2$. Also, since $\E W_0 W_1 \neq 0$, the hypothesis in Theorem~\ref{thm:spectral-representation-F_t-Rmu} is satisfied and thus $\psi$ is an eigenfunction of $R^\mu$ as well (with $p=1$), associated to the eigenvalue $\alpha^2\lambda^2$.


\begin{figure}[h]
	\centering
	\caption{Residuals $\widehat{W}_t - W_t$ with (a) $n=100$, $q=100$; (b) $n=100$, $q=200$; (c) $n=200$, $q=100$; (d) $n=200$, $q=200$. Grey: sample path of $\widehat{W}_t$.}\label{fig:simu-etahat-minus-eta-ts}
	\includegraphics[scale = 1]{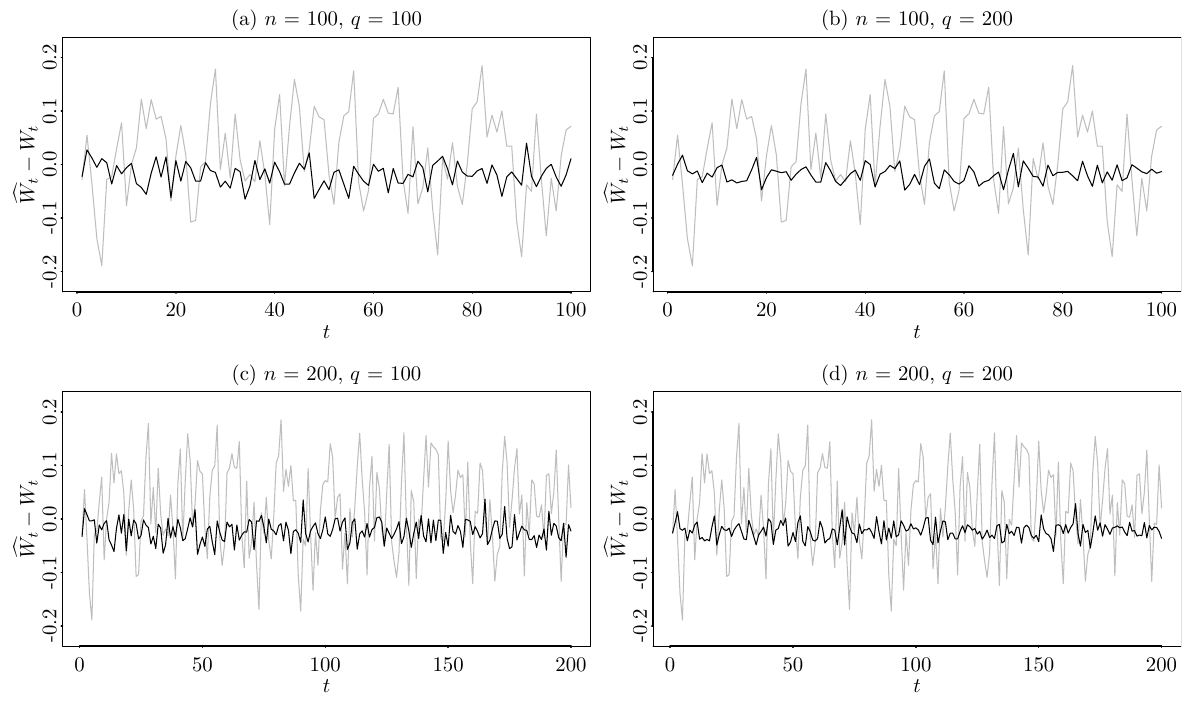}
\end{figure}

Narrowing a little further, let us consider the following special case of the above example. Let $H$ be the \textsc{cdf} corresponding to the uniform distribution over $\left[-1,1\right]$, and let $\psi\left(x\right):=\int_{-1}^x \left( 1/2 - \abs{v}\right)\,dv$. Let $W_t$ be a stationary \textsc{ar}(1) process as above, with the innovations $u_t$ being iid uniformly distributed over $\left[-1+\abs{\alpha}, 1-\abs{\alpha}\right]$. We may assume that the process $\left(u_t\right)$ is indexed for $t\in\mathbb{Z}$ and set $W_t = \sum_{k=0}^\infty \alpha^k u_{t-k}$. This model specification is easier to appreciate if we consider the derivatives of $H$ and $\psi$ over $\mbox{interior}\left(I\right)$, that is, we gain better insight if we differentiate $F_t$ and study the resulting equation, $f_t = h + W_t\psi'$, with $h\left(x\right)=\left(1/2\right)\mathbb{I}_{\left[-1,1\right]}\left(x\right)$ and $\psi'\left(x\right)=\left\{\left(1/2\right) - \abs{x}\right\}\mathbb{I}_{\left[-1,1\right]}\left(x\right)$. First notice that $\abs{W_t}\leq 1$ by construction. Now $f_t$ is a probability density function obtained by adding to the Uniform$\left[-1,1\right]$ density a random deformation where the deforming `parameter' is the function $\psi'$ and the random weights are given by the $W_t$ which lie in $\left[-1,1\right]$. The extreme cases correspond to $W_t=1$, in which case $f_t$ is the triangular distribution over $\left[-1,1\right]$, and to $W_t=-1$, in which case $f_t$ is a V-shaped distribution, $f_t\left(x\right)=\abs{x}\mathbb{I}_{\left[-1,1\right]}\left(x\right)$. Any possible realization of $f_t$ is thus a convex combination of the latter two densities. The interpretation is that $\psi$ adds mass to the center of the uniform distribution when $W_t>0$ and adds mass to the `tail' of that distribution when $W_t<0$. Observe that the proposed $\psi$ is not normalized, but this does not matter since the rescaling would be passed to the $W_t$'s.

To illustrate, we set $\alpha=0.5$ and generated a sample $F_1,\dots,F_{200}$ from the above model and then, for each $t$, we sampled the $X_{it}$, $i=1,\dots,200$, as independent draws from $F_t$. Sampling independently is a simplification but not inconsistent with the present framework, as it may be the case that the process $\left(X_\tau\right)$ admits an independent sampling scheme at each cycle. Next, we estimate $W_t$ and $\psi$ restricting the data set to $n\leq 200$ cycles and $q\leq 200$ intra--cycle observations. We consider the following configurations: \begin{enumerate*}[label={\textit{(\roman*)}}] \item $n=100$, $q=100$; \item $n=100$, $q=200$; \item $n=200$, $q=100$ and; \item full sample $n=200$, $q=200$. \end{enumerate*} Figure~\ref{fig:simu-etahat-minus-eta-ts} shows the residuals $\widehat{W}_t - W_t$ in each of these configurations. It is apparent that increasing the intra-cycle sample sizes will result in more accurate estimates for the $W_t$, as one would expect from Proposition~\ref{thm:LLN_etahat} and Corollary~\ref{thm:LLN2_etahat}. Figure~\ref{fig:simu-psihat-minus-psi} displays the true eigenfunction $\psi$ and the estimates $\widehat{\psi}$, together with the deviations $\widehat{\psi} - \psi$ for each one of the specifications (i)--(iv). These figures point to the fact that, although Theorems~\ref{thm:LLN-for-psihat-no-rate} and \ref{thm:LLN-for-psihat} ensure that asymptotics on $n$ will suffice for consistency of $\widehat{\psi}$, increasing the intra-cycle sample sizes may have a positive impact on estimation as well.

We also study the distributional aspects of the quantities $\Vert \widetilde{F}_t - F_t\Vert$ and $\Vert \widehat{F}_t - F_t\Vert$ in the above setting, with $q = 50$ and with $n \in\{100, 200\}$. Now, since \begin{enumerate*}[label={\textit{(\roman*)}}]\item the data generating process under consideration is such that each sample $X_{1,t},\dots,X_{q_t,t}$ is iid $\sim\xi_t$; \item $q_t = q$ for all $t$; and \item $\left(\xi_t\right)$ is strongly stationary, \end{enumerate*} it is easily seen that the distribution of the random variable $\Vert \widetilde{F}_t - F_t\Vert$ does not depend on $t$ (the same is true of $\Vert \widehat{F}_t - F_t\Vert$, which also does not depend on $n$), and thus it is sufficient to consider the Monte Carlo distribution of, say, $\Vert \widetilde{F}_1 - F_1\Vert$ and $\Vert \widehat{F}_1 - F_1\Vert$. Figure~\ref{fig:simu-boxplot} displays the boxplots corresponding to the Monte Carlo distribution of the latter quantities, obtained across $2000$ replications. These results further illustrate the improvements implied by Corollary~\ref{thm:F-tilde-asymptotics}.

\begin{figure}
	\centering
	\caption{True eigenfunction $\psi$ (dotted), estimated eigenfunction $\widehat{\psi}$ (solid) and deviation $\widehat{\psi} - \psi$ (dashed). (a) $n=100$, $q=100$; (b) $n=100$, $q=200$; (c) $n=200$, $q=100$; (d) $n=200$, $q=200$.}\label{fig:simu-psihat-minus-psi}
	\includegraphics{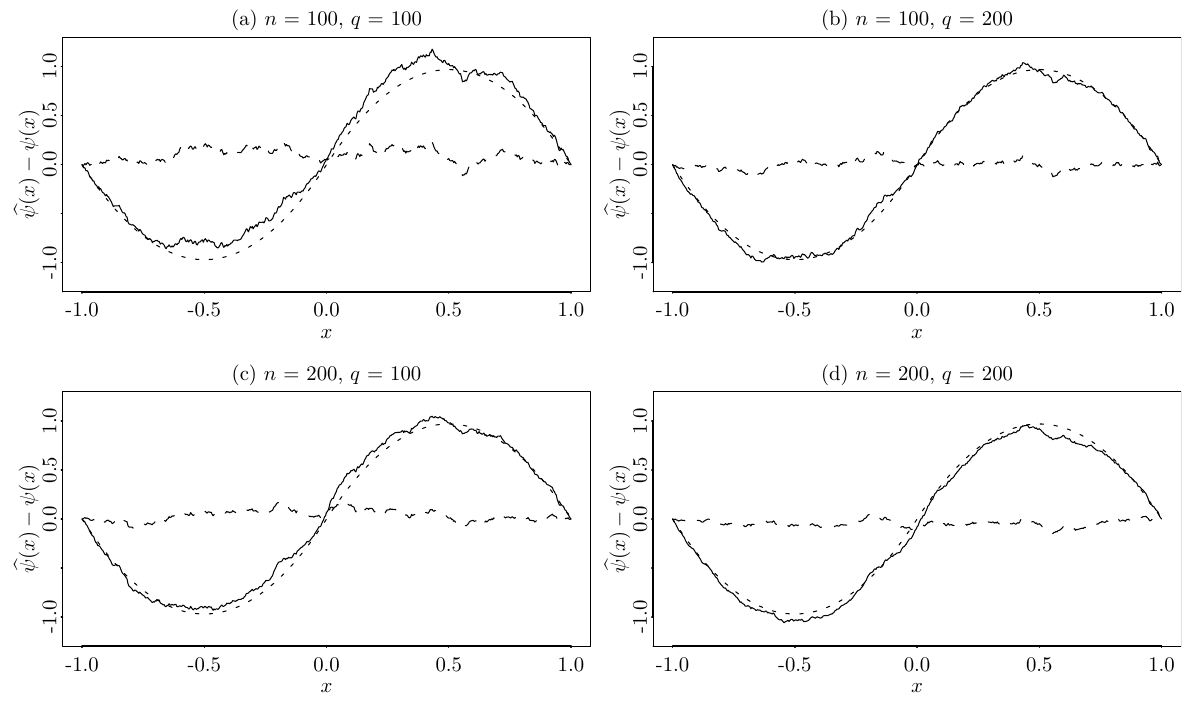}
\end{figure}

In this simulation study and in the empirical application below, all computational work was carried out through the software packages \texttt{R} and \texttt{Julia}.

\begin{figure}
	\centering
	\caption{Monte Carlo boxplot corresponding to: (left) $\Vert \widetilde{F}_1 - F_1\Vert$ with $q = 50$ and $n = 200$; (center) $\Vert \widetilde{F}_1 - F_1\Vert$ with $q = 50$ and $n = 400$; (right) $\Vert \widehat{F}_1 - F_1\Vert$ with $q = 50$.}\label{fig:simu-boxplot}
	\includegraphics{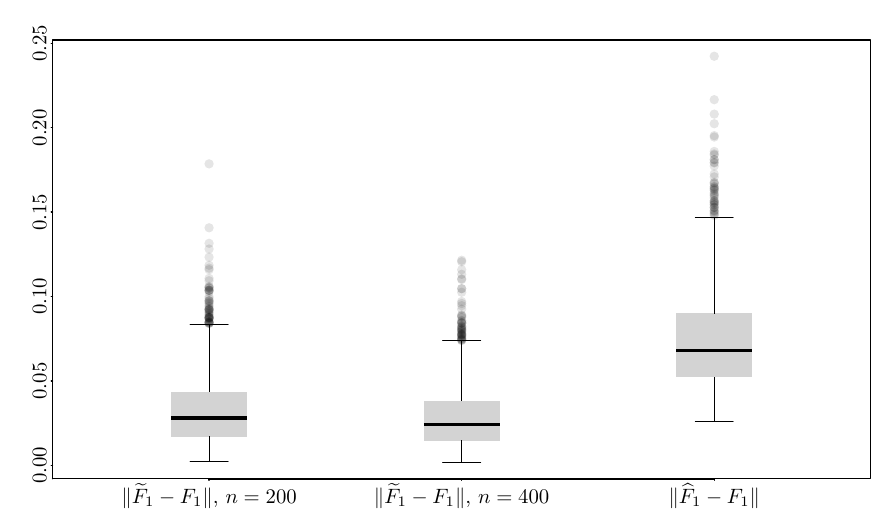}
\end{figure}

\section{Application to financial data}\label{sec:application}
We apply our methodology to forecast distributional aspects and risk in high frequency stock market trading. Our sample consists of 5--minute returns for the \textsc{itub4} asset prices; the raw data is available at the Bovespa ftp site (ftp://ftp.bmf.com.br/marketdata). \textsc{itub4} is the main asset in the composition of the Bovespa index. Our sample ranges from July 1st 2012 to April 30 2015, encompassing 719 business days. At each day $t$ the sample $X_{1t},\cdots,X_{q_t,t}$ consists of $q_t=79$ observations of the 5--minute return process, defined as the difference of logarithm prices over 5 minutes (with daily averages subtracted), ranging from 10:30 \textsc{am} to 5:00 \textsc{pm}. There are 3 carnival days during the sampling period, at which the intra--day sample sizes are $q_{170}=47$, $q_{433}=46$ and $q_{670}=47$ respectively. Our working assumption is that the $X_{it}$ are sampled from a conjugate process $\left(\xi_t,\,X_\tau\right)$, and thus we are assuming that, on day $t$ and conditional on $\left(\xi_t\right)$, the 5--minute returns share the same marginal distribution $\xi_t$. The empirical distribution functions of 5--minute returns for the first two days in our sample, $\widehat{F}_1$ and $\widehat{F}_2$, are plotted in Figure~\ref{fig:sample}. In what follows $\mu$ is the Laplace(0,1) distribution on the real line. In preliminary analyses (not reported here) we tested values of $p$ ranging from $p = 1$ to $p = 10$, without noticeable alterations in the obtained results. We thus set $p=5$ following Bathia et al.~\cite{bathia2010identifying}.
\begin{figure}
	\centering
	\caption{Empirical \textsc{cdf}s of 5--minute returns: days 1 (red) and 2 (blue).}\label{fig:sample}
	\includegraphics[scale = 1]{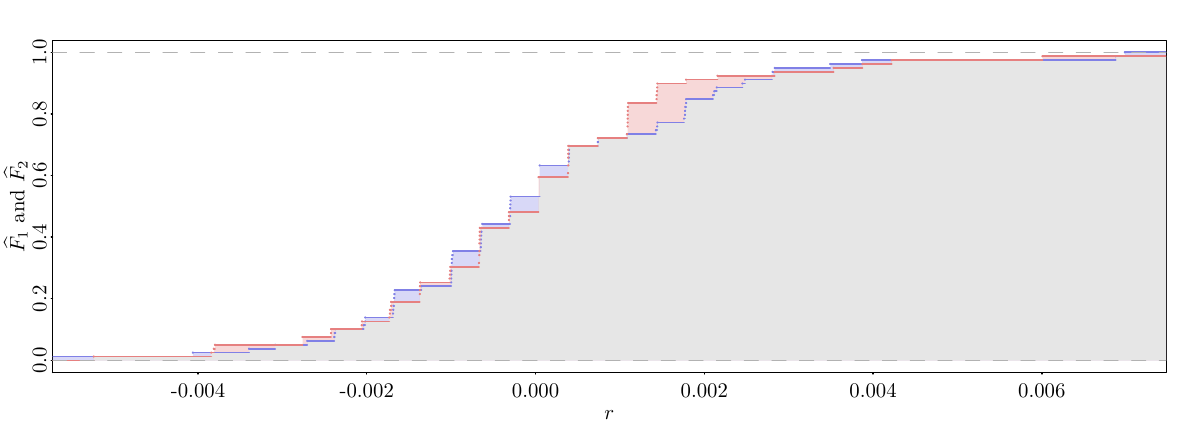}
\end{figure}

\begin{figure}
	\centering
	\caption{(a) 1st to 10th largest estimated eigenvalues; (b) log scale.}\label{fig:thetahat}
	\includegraphics[scale = 1]{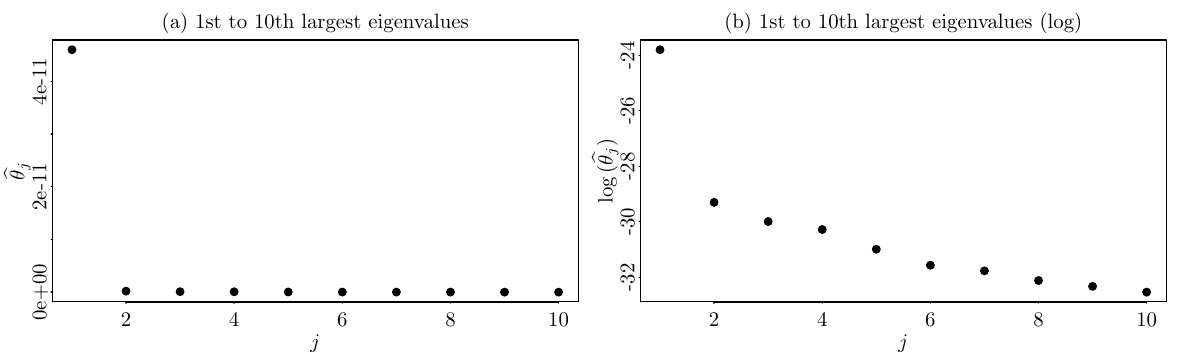}
\end{figure}

\begin{figure}
	\centering
	\caption{Estimated eigenfunction $\widehat{\psi}$.}\label{fig:psihat}
	\includegraphics[scale = 1]{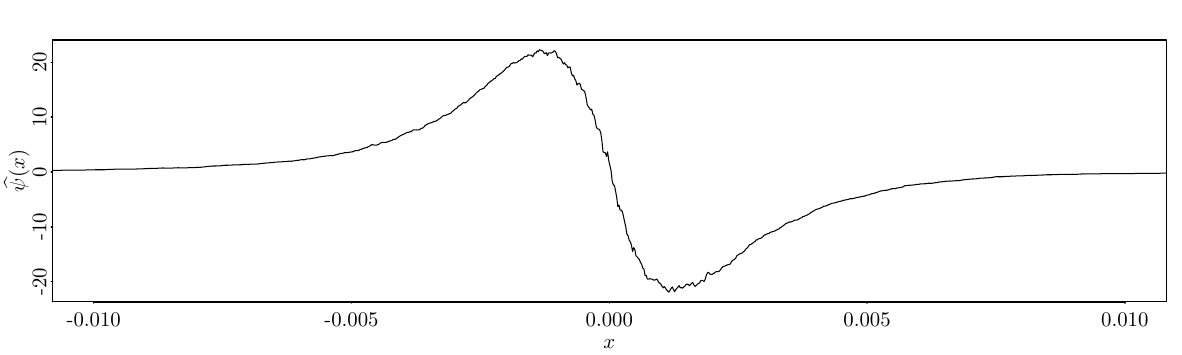}
\end{figure}

Figure~\ref{fig:thetahat} displays the largest estimated eigenvalues $\widehat{\theta}_j$ of $\widehat{R}^\mu$ (panel (b) plots them on a log scale). The drop from the largest to the second largest eigenvalue is markedly steep, whereas from the second to the third largest and so on the drop is much smoother. Moreover, the $p$--value from the Ljung-Box test for independence is nearly zero for the time series $\big(\widehat{W}_{t1}\big)$, $t=1,2,\dots,n$, whereas for $\big(\widehat{W}_{t2}\big)$ it is $0.04528$. This may be indicative that indeed there is dynamic dependence in the direction of $\psi_1$ but not in the remaining ones. Observe though that interpretation of $p$--values must be taken with caution in the present context, as pointed for instance in Bathia et al.~\cite[remark 3]{bathia2010identifying}. The sample path of the estimated $\widehat{W}_{1t}$ is found in Figure~\ref{fig:etahat-ts}, panel (a). The plot of the estimated eigenfunction $\widehat{\psi}_1$ is shown in Figure~\ref{fig:psihat}. It displays a plausible shape whereas the eigenfunction $\widehat{\psi}_2$ is very irregular (the plot is not reported here). In any case we assume $d=1$ and write $\widehat{W}_t\equiv \widehat{W}_{t1}$, and likewise $\widehat{\psi} \equiv \widehat{\psi}_1$. We then perform the augmented Dickey-Fuller test to the time series $\widehat{W}_{t}$, and the obtained $p$-values are virtually zero whatever specification is used, be it with a drift component, a drift and a trend component, or neither. Therefore we take $\widehat{W}_{t}$ to be stationary. Figure~\ref{fig:etahat-acf-pacf} displays the \textsc{acf} and \textsc{pacf} plots for $\widehat{W}_{t}$.

\begin{figure}
	\centering
	\caption{Estimated coefficients $\widehat{W}_t$: (a) time series plot; (b) lagged scatterplot.}\label{fig:etahat-ts}
	\includegraphics[scale = 1]{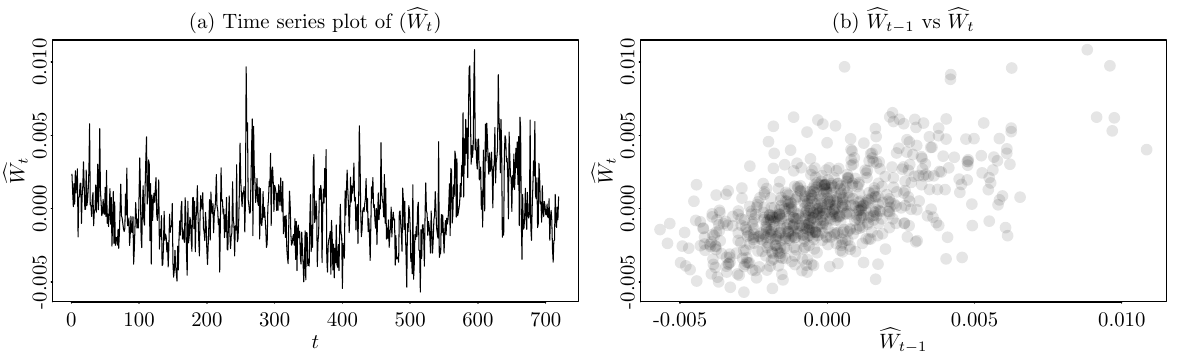}
\end{figure}

\begin{figure}
	\centering
	\caption{Correlation functions of $\widehat{W}_t$. (a) \textsc{acf}; (b) \textsc{pacf}.}\label{fig:etahat-acf-pacf}
	\includegraphics[scale = 1]{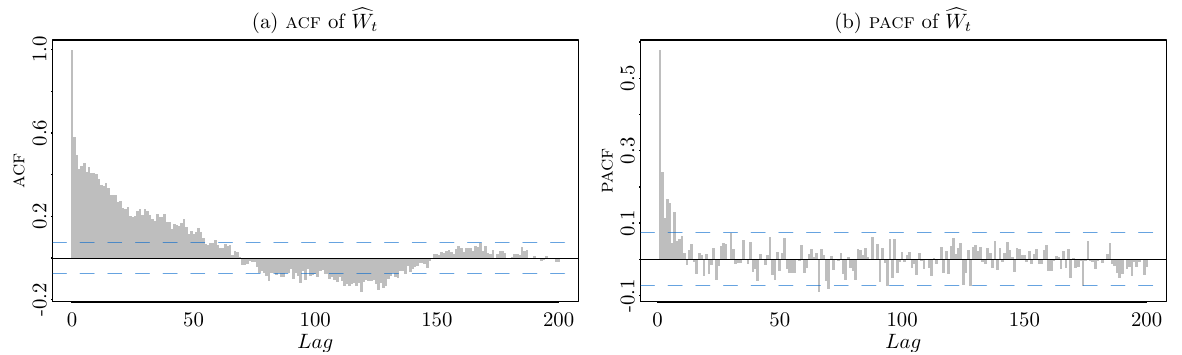}
\end{figure}

We now model the estimated latent time series $\big(\widehat{W}_t\big)$. Figure~\ref{fig:etahat-ts}, panel (b) displays the scatterplot of $\widehat{W}_{t-1}$ \textsc{vs} $\widehat{W}_t$, indicating a linear relationship. Together with the \textsc{acf} and \textsc{pacf} plots from Figure~\ref{fig:etahat-acf-pacf}, as well as the results from the augmented Dickey-Fuller tests discussed above, we feel authorized to assume that $\big(\widehat{W}_t\big)$ is an \textsc{arma} process. We choose the \textsc{arma}(1,2) specification based on the \textsc{aic} criterium. The estimation results can be found in Table~\ref{tab:eta-arma}.

\begin{table}
\centering{}\caption{Coefficient estimates and standard errors obtained from an \textsc{arma}(1,2) fit to the data $\left(\widehat{W}_1,\dots,\widehat{W}_n\right)$.}\label{tab:eta-arma}
\begin{centering}
\begin{tabular*}{10cm}{@{\extracolsep{\fill}}lcccc}
\toprule
  & \textsc{ar1}  & \textsc{ma1} & \textsc{ma2} & intercept \tabularnewline
\midrule
\midrule
Coef. & 0.9706 & -0.6470 & -0.1276 & 0e+00 \tabularnewline

s.e.  & 0.0116 &  0.0378 &  0.0353 & 5e-04 \tabularnewline
\midrule
\textsc{aic} = -6964.83 \tabularnewline
\bottomrule
\end{tabular*}
\par\end{centering}
\end{table}

By modeling and forecasting the time series $\big(\widehat{W}_t\big)$ it is possible to obtain forecasts of the upcoming latent distributions $F_{n+1}$. In this direction, it will be convenient -- although abusing a little on notation -- to redefine the full sample size as $N = 719$, and to write $\widehat{\E}_{|n}F_0$, $\widehat{\psi}_{|n}$, $\widehat{W}_{t|n}$, and $\widetilde{F}_{t|n}$ to denote the estimates of the corresponding quantities obtained through applying our methodology to a subsample of size $n\leq N$ (notice though that $\widehat{F}_t$ does not depend on $n$). Thus for instance $\widehat{\E}_{|N}F_0 = \widehat{\E}F_0$ in the previous notation. Now, letting $\widehat{W}_{n+1|n}$ denote the one--step--ahead forecast implied by an \textsc{arma}(1,2) fit to the data $\big(\widehat{W}_{1|n},\dots,\widehat{W}_{n|n}\big)$, we can define the forecast
\begin{equation*}
	\widetilde{F}_{n+1|n} := \widehat{\E}_{|n}F_0 + \widehat{W}_{n+1|n}\,\widehat{\psi}_{|n}.
\end{equation*}
Next, we set $n_0 = 350$ and generate forecasts $\widetilde{F}_{n+1|n}$ with $n$ ranging from $n_0$ to $N-1$. Figure~\ref{fig:Ftilde-norms} displays two such forecasts, corresponding to the indices $n\in\left\{n_0,\dots,N-1\right\}$ which minimize (resp. maximize) the norm $\Vert\widetilde{F}_{n+1|n} - \widehat{F}_{n+1}\Vert$. Notice that the object being forecast (namely, $F_{n+1}$) is not observable, not even \emph{ex post}, whence we use $\widehat{F}_{n+1}$ as a proxy.

\begin{figure}
	\centering
	\caption{\textsc{Cdf} forecast $\widetilde{F}_{n+1|n}$ (black) and \emph{ex post} empirical \textsc{cdf} $\widehat{F}_{n+1}$ (blue): (a) $n = \arg\min_{n_0\leq t<N} \Vert\widetilde{F}_{t+1|t} - \widehat{F}_{t+1}\Vert$; (b) $n = \arg\max_{n_0\leq t<N} \Vert\widetilde{F}_{t+1|t} - \widehat{F}_{t+1}\Vert$.}\label{fig:Ftilde-norms}
	\includegraphics[scale = 1]{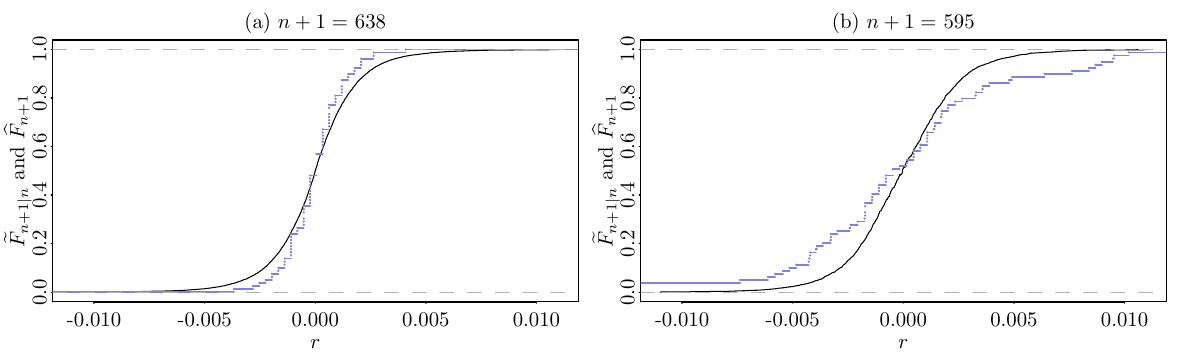}
\end{figure}
\bigskip

\paragraph{Modeling the conditional variance} Of particular interest in the present framework is the possibility of forecasting the conditional variance $\sigma_{n+1}^2:=\Var\left(X_{1,n+1} \vert \Xi\right)$, which describes the variability of the process $\left(X_\tau:\, \tau\in\left[n+1,n+2\right)\right)$ during cycle $n+1$. One such forecast is easily obtained from the above \textsc{cdf} forecasting procedure, by letting
\begin{equation*}
	\widetilde{\sigma}_{n+1|n}^2 := \int x^2\,\widetilde{F}_{n+1|n}\left(dx\right) - \left( \int x\,\widetilde{F}_{n+1|n}\left(dx\right) \right)^2.
\end{equation*}
Let us call this approach the \emph{forecasting strategy 1}.

Alternatively, one may consider a model analogous to the benchmark \textsc{har} Realized Volatility model (see Corsi~\cite{Corsi19022009} and Corsi et al.~\cite{corsi2012har} for a review), given by
\begin{equation}\label{eq:HAR-RV}
	\log\left(\widehat{\sigma}_{t+1}^2\right) = \beta_0 + \beta_1\log\left(\widehat{\sigma}_t^2\right) + \beta_2\log\left(\widehat{\sigma}_{t,\textsc{w}}^2\right) + \texttt{error},
\end{equation}
where $\widehat{\sigma}_t^2$ is the empirical variance of the sample $X_{1,t},\dots,X_{q_t,t}$, and where $\widehat{\sigma}_{t,\textsc{w}}^2 := \left(\widehat{\sigma}_t^2 + \cdots + \widehat{\sigma}_{t-4}^2\right)/5$ is a heterogeneous weekly component (we drop the monthly component for convenience). Here the error term is assumed to satisfy some regularity conditions such as being serially uncorrelated, having zero conditional mean (or median), etc. The above model specification is legitimate in the present context, since the intra--cycle sample variances are proportional -- under a mild assumption of uncorrelated returns -- to the daily realized volatilities, which are described by an equation similar to \eqref{eq:HAR-RV}. We thus let $\widehat{\sigma}_{n+1|n}^2$, $n_0\leq n \leq N-1$, denote the exponential of the one--step--ahead forecasts obtained from a median regression fit of model \eqref{eq:HAR-RV} to the data $\widehat{\sigma}_1^2,\dots,\widehat{\sigma}_n^2$. Let us call this approach the \emph{forecasting strategy 2}.

Notice that here, as it was the case in the context of forecasting the latent \textsc{cdf}s, again the `true' quantity being forecast (namely, $\sigma_{t+1}^2$) is not observable, not even \emph{ex post}. Thus our predictions are contrasted with empirical realizations, which are taken as proxies for their population counterparts; for instance the mean squared error of forecasting strategy 1 above is calculated as
\begin{equation*}
	\frac{1}{N-n_0}\sum_{n=n_0}^{N-1} \left( \widetilde{\sigma}_{n+1|n}^2 - \widehat{\sigma}_{n+1}^2 \right)^2.
\end{equation*}
Figure~\ref{fig:sigmahat-fcst-boxplot} displays the boxplots of the forecast errors $\widetilde{\sigma}_{n+1|n}^2 - \widehat{\sigma}_{n+1}^2$ and $\widehat{\sigma}_{n+1|n}^2 - \widehat{\sigma}_{n+1}^2$, across the forecasting horizon (that is, with $n$ ranging from $n_0$ to $N-1$). Heuristically, one would expect that the forecasting strategies which use the $\widehat{W}_t$ in their formulation would display better forecasting power since each of the $\widehat{W}_t$ is constructed using full sample information, whereas $\widehat{\sigma}_t^2$ only uses information from day $t$. This reasoning is supported by the relative mean squared error of strategy 1 with respect to strategy 2, which is approximately $0.91$ in this data set. Furthermore, at the $5\%$ level we reject the null hypothesis that strategies 1 and 2 have equal forecasting accuracy, in favor of the alternative that strategy 1 has greater forecasting accuracy (with the Diebold--Mariano test statistic having a $p$--value equal to $0.03718$). Aside from these improvements, we call attention to the fact that, by forecasting the latent $F_{n+1}$, one can readily obtain a derived forecast of any quantity that is attached to that \textsc{cdf}, such as quantiles (especially useful in the context of Value-at-Risk evaluation), probabilities of certain events of interest, skewness and kurtosis of the distribution, etc.

\begin{figure}
	\centering
	\caption{Boxplot of the statistics $\widetilde{\sigma}_{n+1|n}^2 - \widehat{\sigma}_{n+1}^2$ and $\widehat{\sigma}_{n+1|n}^2 - \widehat{\sigma}_{n+1}^2$, with $n_0\leq n < N$: (a) no outliers are plotted; (b) outliers are plotted.}\label{fig:sigmahat-fcst-boxplot}
	\includegraphics[scale = 1]{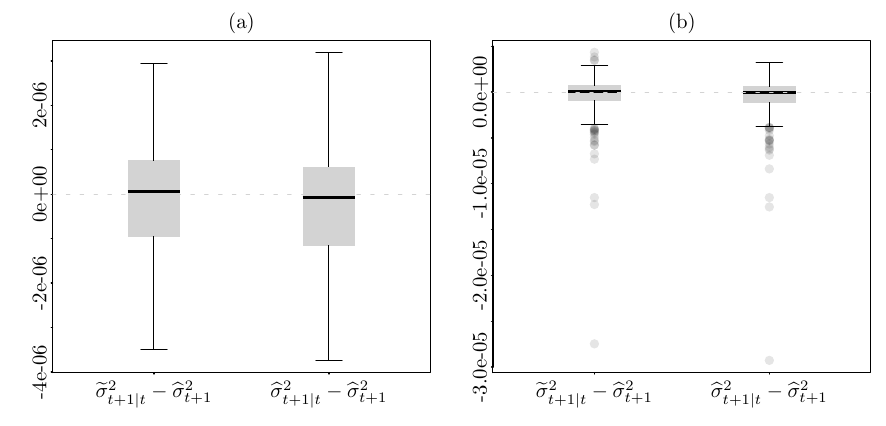}
\end{figure}

\bigskip

A last comment on how to interpret the obtained forecasts may come in handy. At the end of day $t$, the statistician can apply our methodology and obtain, say, a forecast $\widehat{\sigma}_{t+1\vert t}^2$ for the variance $\sigma_{t+1}^2$. The latter quantity is the variance of a 5--minute return at any instant during day $t+1$, as implied by the common marginal distribution of these returns via equation~\eqref{eq:weakly-conjugate-model}. Thus someone who negotiates in the market in 5--minute intervals has `typical' variance equal to $\sigma_{t+1}^2$; this is the quantity that we are forecasting, and thus $\widehat{\sigma}_{t+1\vert t}^2$ estimates the `typical' variability someone who negotiates at each 5 minutes would expect to find next day. It is convenient to mention that a forecast obtained through a high frequency \textsc{garch} fit to the data available up to the end of day $t$ would have a different interpretation and thus would not be comparable to our method. Indeed, the two modeling frameworks may be difficult to connect. For example, at the end of day $t$, the \textsc{garch} model will give a forecast for the variance of the `opening' return rather than for the typical variance of $5$--minute returns over day~$t+1$.

Lastly, recall that in the framework of conjugate processes, a question of its own interest is identification of the dimension $d$ and characterization of the dynamics of $\big(\widehat{\boldsymbol{W}}_t\big)$. In this regard,  we can say that there is some evidence in the data that the true dimension is indeed equal to one, and that the latent $\left(W_t\right)$ sequence is a linear \textsc{arma} process. Testing these and other hypotheses would require derivation of the asymptotic distribution of our estimators and is beyond the scope of the present paper. Advancements in this regard can be found in Mas~\cite{mas2002weak} and Kokoszka and Reimherr~\cite{kokoszka2013asymptotic}.

\newpage

\begin{appendices}

\numberwithin{equation}{section}

\section{A bit of theory}\label{sec:bit-theory}
Let $M_1\left(\mathbb{R}\right)$ denote the space of all Borel probability measures on $\mathbb{R}$. In this paper we always consider $M_1\left(\mathbb{R}\right)$ endowed with the weak* topology, that is, the topology in which a sequence $\mu_n$ of probability measures converges to a probability measure $\mu$ if and only if $\int f\,d\mu_n\rightarrow \int f\,d\mu$ for each continuous bounded function $f$ on $\mathbb{R}$. This topology is metrizable by the Lévy--Prohorov metric
\begin{equation*}
	\rho\left(\mu,\nu\right) := \inf\left\{\delta\geq 0:\,\mu\left(B\right)<\nu\left(B^\delta\right)+\delta\quad\mbox{and}\quad\nu\left(B\right)<\mu\left(B^\delta\right)+\delta,\,\forall B\right\},
\end{equation*}
where $B^\delta$ is the $\delta$--neighborhood of any set $B\subset\mathbb{R}$.

Let $\xi$ be a random element in $M_1\left(\mathbb{R}\right)$. Obviously, if $f$ is a continuous bounded function on $\mathbb{R}$, then $\int f\,d\xi$ is a real random variable. Recall that the \emph{baricenter of $\xi$} is the unique element $\E\xi\in M_1\left(\mathbb{R}\right)$ such that the equality $\E\int f\,d\xi = \int f\,d\E\xi$ holds for all continuous bounded $f\colon\mathbb{R}\rightarrow\mathbb{R}$. The baricenter $\E\xi$ is also known as the \emph{Pettis integral} of $\xi$ with respect to $\Prob$, or as the \emph{expectation} of $\xi$.

\begin{lemma}\label{thm:random-measure-basics}
Let $\xi$ be a random element in $M_1\left(\mathbb{R}\right)$ and let $\E\xi$ be its baricenter. Then
\begin{enumerate}[label={(\textit{\roman*}})]
	\item For each Borel set $B$, $\xi\left(B\right)$ is a random variable;
	\item $\E\xi\left(B\right) = \E\left(\xi\left(B\right)\right)$; \label{thm:random-measure-basics-item-ii}
	\item $\supp\xi\subset\supp\E\xi$ almost surely.
\end{enumerate}
\end{lemma}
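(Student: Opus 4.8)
The plan is to prove the three items in order, leaning on the standard fact that the Borel $\sigma$--field of $M_1\left(\mathbb{R}\right)$ under the weak* topology is generated by the integration functionals $\mu\mapsto\int f\,d\mu$, with $f$ continuous and bounded, so that $\omega\mapsto\int f\,d\xi^\omega$ is a real random variable for every such $f$. For item $(i)$ I would first handle open sets: given an open $U\subset\mathbb{R}$, choose continuous functions $0\leq f_n\leq 1$ with $f_n\uparrow\mathbb{I}_U$ pointwise (for instance $f_n\left(x\right):=\min\left\{1,\,n\cdot\mathrm{dist}\left(x,U^c\right)\right\}$), so that by monotone convergence $\int f_n\,d\xi^\omega\uparrow\xi^\omega\left(U\right)$ for every $\omega$; hence $\omega\mapsto\xi^\omega\left(U\right)$ is measurable, being a countable supremum of random variables. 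Then I would run a Dynkin $\pi$--$\lambda$ argument on the class $\mathcal D:=\left\{B\in\mathcal B\left(\mathbb{R}\right):\,\omega\mapsto\xi^\omega\left(B\right)\text{ is measurable}\right\}$, which contains $\mathbb{R}$, is stable under proper differences (via $\xi\left(C\setminus B\right)=\xi\left(C\right)-\xi\left(B\right)$ for $B\subset C$) and under increasing countable unions, and contains the open sets; since the open sets form a $\pi$--system generating $\mathcal B\left(\mathbb{R}\right)$, this gives $\mathcal D=\mathcal B\left(\mathbb{R}\right)$.

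For item $(ii)$, with the same $f_n\uparrow\mathbb{I}_U$ I would apply monotone convergence twice: on the deterministic side $\int f_n\,d\E\xi\uparrow\E\xi\left(U\right)$, and inside the expectation $\E\int f_n\,d\xi\uparrow\E\left(\xi\left(U\right)\right)$, using $\int f_n\,d\xi^\omega\uparrow\xi^\omega\left(U\right)$. Since $\E\int f_n\,d\xi=\int f_n\,d\E\xi$ by the defining property of the baricenter, letting $n\to\infty$ yields $\E\left(\xi\left(U\right)\right)=\E\xi\left(U\right)$ for every open $U$; as both $B\mapsto\E\xi\left(B\right)$ and $B\mapsto\E\left(\xi\left(B\right)\right)$ are Borel probability measures (countable additivity of the latter again by monotone convergence) agreeing on the $\pi$--system of open sets, they coincide on $\mathcal B\left(\mathbb{R}\right)$. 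Item $(iii)$ is then immediate: set $V:=\mathbb{R}\setminus\supp\E\xi$, which is open with $\E\xi\left(V\right)=0$ (the support of a Borel measure on the second--countable space $\mathbb{R}$ is well defined, being the complement of the union of the countably many open null sets drawn from a countable base); by item $(ii)$, $\E\left(\xi\left(V\right)\right)=0$, and since $\xi\left(V\right)\geq 0$ this forces $\xi\left(V\right)=0$ almost surely, so on that event $\xi^\omega$ is concentrated on the closed set $\supp\E\xi$ and therefore $\supp\xi^\omega\subset\supp\E\xi$.

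The only delicate point I anticipate is in item $(i)$: making sure that the hypothesis ``$\xi$ is a random element in $M_1\left(\mathbb{R}\right)$'' is precisely what delivers measurability of $\omega\mapsto\int f\,d\xi^\omega$, i.e.\ correctly identifying the generating class for the Borel $\sigma$--field on $M_1\left(\mathbb{R}\right)$, and recognizing that the passage from indicators of open sets to indicators of arbitrary Borel sets is not automatic but genuinely requires the monotone--class step. Once item $(i)$ is in place, items $(ii)$ and $(iii)$ follow routinely from the baricenter identity and nonnegativity.
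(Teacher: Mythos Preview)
Your proof is correct and follows the same overall strategy as the paper: establish measurability of $\mu\mapsto\mu(B)$ first on open (or closed) sets and then extend to all Borel sets, and deduce item~(iii) from item~(ii) by looking at the open complement of $\supp\E\xi$. The main technical difference is in the extension step for item~(i): the paper invokes the Portmanteau theorem to get semicontinuity of $\mu\mapsto\mu(K)$ and $\mu\mapsto\mu(U)$ directly, and then passes to general Borel $B$ via outer regularity, writing $\mu(B)=\lim\mu(U_j)$ for a decreasing sequence of open sets; your Dynkin $\pi$--$\lambda$ argument is cleaner here, since in the paper's argument the approximating $U_j$ depend on $\mu$, which makes the pointwise-limit claim for $f_B$ delicate as written. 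You also supply a full proof of item~(ii), which the paper leaves as an exercise; your argument via monotone convergence and agreement of two probability measures on the $\pi$--system of open sets is the natural one.
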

\begin{proof}
For the first item, let $f_B\left(\mu\right):=\mu\left(B\right)$ be defined for $\mu\in M_1\left(\mathbb{R}\right)$. It is sufficient to show that $f_B$ is measurable, since $\xi\left(B\right) = f_B\circ\xi$. If $K$ is closed, then $f_K$ is lower semicontinuous, by the Portmanteau Theorem, and thus measurable. By a similar argument $f_U$ is measurable if $U$ is open. For the general $B$, there are some open sets $\left\{U_j\right\}$ such that $\mu\left(B\right) = \mu\left(\bigcap U_j\right)$, and thus $f_B\left(\mu\right) = \mu\left(B\right) = \mu\left(\bigcap U_j\right) = \lim \mu\left(U_j\right) = \lim f_{U_j}\left(\mu\right)$ which establishes measurability of $f_B$.

For the third item, let $U:=\mathbb{R}\setminus\supp\left(\E\xi\right)$. Then $\xi\left(U\right)\geq 0$ and $\E\left(\xi\left(U\right)\right) = \E\xi\left(U\right) = 0$, by item~\ref{thm:random-measure-basics-item-ii}. Hence $\xi\left(U\right) = 0$ almost surely.

The second assertion is left as an exercise.
\end{proof}

\begin{lemma}\label{thm:hilbert-embedding}
Let $\xi$ be a random element in $M_1\left(\mathbb{R}\right)$, and let $\E\xi$ be its baricenter . Define $F$ by
\begin{equation*}
F\left(x\right) := \xi\left(-\infty,x\right],\qquad x\in\mathbb{R}.
\end{equation*}
If $\mu$ is a finite measure on $\mathbb{R}$, absolutely continuous with respect to Lebesgue measure on an interval containing $\supp\left(\E\xi\right)$, then $F$ is a bounded, strongly measurable random element in $L^2\left(\mu\right)$. Moreover, the Bochner expectation of $F$ is the map $x\mapsto \E\xi\left(-\infty, x\right]$.
\end{lemma}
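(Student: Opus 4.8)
The plan is to verify, in order: (i) $F^\omega$ lies in $L^2\left(\mu\right)$ for every $\omega$ with a uniform norm bound (this is the ``bounded'' assertion); (ii) $F$ is strongly measurable; (iii) $F$ is Bochner integrable; and (iv) its Bochner integral is the stated map. For (i), note that for each fixed $\omega$ the function $F^\omega\colon x\mapsto\xi^\omega\left(-\infty,x\right]$ is an honest cumulative distribution function — nondecreasing, right--continuous, $\left[0,1\right]$--valued — so $\norm{F^\omega}_\mu^2=\int\left(F^\omega\right)^2\,d\mu\leq\mu\left(\mathbb{R}\right)<\infty$ since $\mu$ is finite; hence $\sup_\omega\norm{F^\omega}_\mu\leq\mu\left(\mathbb{R}\right)^{1/2}$. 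At this level the absolute--continuity hypothesis on $\mu$ plays no role; only finiteness of $\mu$ matters, together with separability of $L^2\left(\mu\right)$ — which holds because the Borel $\sigma$--field of $\mathbb{R}$ is countably generated and $\mu$ is finite.

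The crux is step (ii), which I would reduce to joint measurability of $\left(\omega,x\right)\mapsto F^\omega\left(x\right)$ on $\Omega\times\mathbb{R}$. By Lemma~\ref{thm:random-measure-basics}(\textit{i}), $\omega\mapsto F^\omega\left(q\right)=\xi^\omega\left(-\infty,q\right]$ is $\mathscr{A}$--measurable for each rational $q$. For each $n$ put $g_n\left(\omega,x\right):=F^\omega\bigl(n^{-1}\left(\floor{nx}+1\right)\bigr)$; since $x\mapsto n^{-1}\left(\floor{nx}+1\right)$ is a Borel step function taking only rational values, and on the set $\left\{x:\floor{nx}=k\right\}$ the map $g_n$ coincides with the jointly measurable map $\left(\omega,x\right)\mapsto F^\omega\bigl(\left(k+1\right)/n\bigr)$, the function $g_n$ is jointly measurable on $\Omega\times\mathbb{R}$. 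As $n^{-1}\left(\floor{nx}+1\right)\downarrow x$ and $F^\omega$ is right--continuous, $g_n\to F^\omega\left(x\right)$ pointwise, so $\left(\omega,x\right)\mapsto F^\omega\left(x\right)$ is jointly measurable. Consequently, for every $g\in L^2\left(\mu\right)$ the map $\omega\mapsto\langle F^\omega,g\rangle_\mu=\int F^\omega\left(x\right)g\left(x\right)\,\mu\left(dx\right)$ is measurable by Fubini (the integrand is integrable since $\abs{F^\omega}\leq 1$, $\mu$ finite, and $\int\abs{g}\,d\mu\leq\mu\left(\mathbb{R}\right)^{1/2}\norm{g}_\mu<\infty$). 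Thus $F$ is weakly measurable, and since $L^2\left(\mu\right)$ is separable the Pettis measurability theorem gives that $F$ is strongly measurable.

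Step (iii) is then immediate, as $\E\norm{F}_\mu\leq\mu\left(\mathbb{R}\right)^{1/2}<\infty$, so $F$ is Bochner integrable with Bochner integral $\E F\in L^2\left(\mu\right)$. For (iv), set $m\left(x\right):=\E\xi\left(-\infty,x\right]$; this is the distribution function of the baricenter measure, hence again a bona fide CDF, so $m\in L^2\left(\mu\right)$, and by Lemma~\ref{thm:random-measure-basics}(\textit{ii}) one has $m\left(x\right)=\E\bigl[\xi\left(-\infty,x\right]\bigr]=\E\left[F\left(x\right)\right]$ for every $x$. Fixing $g\in L^2\left(\mu\right)$ and using joint measurability together with Fubini once more,
\begin{equation*}
	\langle m,g\rangle_\mu=\int\E\left[F\left(x\right)\right]g\left(x\right)\,\mu\left(dx\right)=\E\int F\left(x\right)g\left(x\right)\,\mu\left(dx\right)=\E\langle F,g\rangle_\mu=\langle\E F,g\rangle_\mu,
\end{equation*}
the last equality being the defining property of the Bochner integral (it commutes with the bounded linear functional $h\mapsto\langle h,g\rangle_\mu$). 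Since $g$ is arbitrary, $m=\E F$ in $L^2\left(\mu\right)$, as claimed. The only genuinely delicate point is the joint--measurability argument underlying step (ii); the rest is a routine combination of the CDF structure of $F^\omega$, finiteness of $\mu$, and Fubini's theorem.
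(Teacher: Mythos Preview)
Your proof is correct but follows a genuinely different route from the paper's. The paper does not establish joint measurability of $\left(\omega,x\right)\mapsto F^\omega\left(x\right)$ at all; instead it shows that the map $\nu\mapsto\bar{\nu}$ from $M_1\left(\mathbb{R}\right)$ into $L^2\left(\mu\right)$, where $\bar{\nu}\left(x\right):=\nu\left(-\infty,x\right]$, is \emph{continuous}: if $\nu_n\to\nu$ weakly then, by Portmanteau, $\bar{\nu}_n\left(x\right)\to\bar{\nu}\left(x\right)$ at every continuity point of $\bar{\nu}$, and since the discontinuity set is countable it has $\mu$--measure zero by the absolute--continuity hypothesis; dominated convergence then gives $\bar{\nu}_n\to\bar{\nu}$ in $L^2\left(\mu\right)$. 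Strong measurability of $F$ follows as the composition of the measurable $\xi$ with this continuous map.

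The comparison is instructive. Your argument is more elementary and, as you correctly observe, never uses the absolute--continuity assumption on $\mu$ --- finiteness of $\mu$ and separability of $L^2\left(\mu\right)$ suffice --- so it actually proves a slightly stronger statement. The paper's argument, by contrast, genuinely needs $\mu$ to kill countable sets. What the paper's route buys is the continuity of $\nu\mapsto\bar{\nu}$ as a byproduct, and this is not idle: it is invoked later (in the proof of Theorem~\ref{thm:LLN-for-Fhat}) via the Continuous Mapping Theorem to transfer ergodicity of $\left(\xi_t\right)$ to $\left(F_t\right)$. Your Pettis--via--joint--measurability approach would not supply that step without extra work. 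One cosmetic remark: the sequence $n^{-1}\left(\floor{nx}+1\right)$ converges to $x$ from the right but is not monotone in $n$, so ``$\downarrow x$'' is a slight abuse; right--continuity of $F^\omega$ still gives the pointwise limit you need.
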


\begin{proof}
For $\nu\in M_1\left(\mathbb{R}\right)$, let $\bar{\nu}\colon\mathbb{R}\rightarrow\mathbb{R}$ be defined by $\bar{\nu}\left(x\right):=\nu\left(-\infty,x\right]$. Clearly $\bar{\nu}$ is measurable and bounded, and hence one has $\bar{\nu}\in L^2\left(\mu\right)$, for each $\nu \in M_1\left(\mathbb{R}\right)$.
It is sufficient to prove that the map $\nu\mapsto \bar{\nu}$ from $M_1\left(\mathbb{R}\right)$ to $L^2\left(\mu\right)$ is continuous. Let $\nu_n\rightarrow\nu$ in $M_1\left(\mathbb{R}\right)$. By the Portmanteau Theorem, $\bar{\nu}_n\left(x\right)\rightarrow \bar{\nu}\left(x\right)$ for each $x$ at which $\bar{\nu}$ is continuous. The set of discontinuity points of $\bar{\nu}$ is at most countable and is contained in $\supp\left(\E\xi\right)$, and hence has $\mu$ measure $0$. That is, $\abs{\bar{\nu}_n\left(x\right) - \bar{\nu}\left(x\right)}^2\rightarrow 0$ for $\mu$--almost all $x$. Moreover, $\abs{\bar{\nu}_n\left(x\right) - \bar{\nu}\left(x\right)}^2 \leq 1$ and hence the Lebesgue Dominated Convergence Theorem gives $\int \abs{\bar{\nu}_n\left(x\right) - \bar{\nu}\left(x\right)}^2\,\mu\left(dx\right)\rightarrow 0$. This establishes continuity of $\nu\mapsto\bar{\nu}$. The remaining assertions are left as an exercise.
\end{proof}

\begin{theorem}\label{thm:xi-orthogonal-ker}
Let $\mathscr{H}$ be a separable Hilbert space and $H$ be a centered random element in $\mathscr{H}$ of strong second order, with covariance operator $C$. Then $H \perp \Null(C)$ almost surely.
\end{theorem}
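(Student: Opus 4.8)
The plan is to read off the conclusion from the defining property of the covariance operator and then promote a family of ``for each fixed $x$'' almost sure statements into a single almost sure statement, using separability of $\mathscr{H}$.

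First I would recall that, since $H$ is centered and of strong second order, its covariance operator $C$ is the bounded, self-adjoint, positive, trace-class operator characterized by $\langle Cx,y\rangle = \E\!\left[\langle H,x\rangle\langle H,y\rangle\right]$ for all $x,y\in\mathscr{H}$; in particular $\langle Cx,x\rangle = \E\!\left[\langle H,x\rangle^2\right]$. Hence if $x\in\Null(C)$ then $\E\!\left[\langle H,x\rangle^2\right] = \langle Cx,x\rangle = 0$, and since $\langle H,x\rangle^2\geq 0$ this forces $\langle H,x\rangle = 0$ almost surely. This already yields the orthogonality $H\perp x$, but only outside an $x$-dependent $\Prob$-null set, so it does not immediately give a single null set that works for \emph{all} $x\in\Null(C)$ simultaneously.

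The key step is to remove this dependence on $x$. If $\Null(C) = \{0\}$ there is nothing to prove, so assume otherwise. As a closed subspace of the separable Hilbert space $\mathscr{H}$, $\Null(C)$ is itself separable and therefore admits a countable orthonormal basis $\{e_k:\,k\in\mathbb{N}\}$. By the previous paragraph, for each $k$ the event $N_k := \{\langle H,e_k\rangle \neq 0\}$ has $\Prob\left(N_k\right) = 0$, so $N := \bigcup_{k}N_k$ is again $\Prob$-null. For every $\omega\notin N$ one has $\langle H(\omega),e_k\rangle = 0$ for all $k$; since the linear span of $\{e_k\}$ is dense in $\Null(C)$ and $y\mapsto\langle H(\omega),y\rangle$ is continuous, it follows that $\langle H(\omega),y\rangle = 0$ for every $y\in\Null(C)$, i.e.\ $H(\omega)\perp\Null(C)$. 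Thus $H\perp\Null(C)$ off the null set $N$, which is the assertion. (Equivalently, using self-adjointness of $C$, $H\in\Null(C)^\perp = \overline{\Ran(C)}$ almost surely.)

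The only real obstacle here is the measure-theoretic bookkeeping in the second step: an uncountable conjunction of almost sure identities need not hold almost surely, and separability of $\mathscr{H}$ is precisely what lets us replace the uncountable family $\{\langle H,x\rangle = 0:\,x\in\Null(C)\}$ by the countable family indexed by an orthonormal basis. Everything else is an immediate consequence of the definition of the covariance operator and the nonnegativity of $\langle H,x\rangle^2$.
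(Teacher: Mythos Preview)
Your argument is correct: the identity $\langle Cx,x\rangle = \E\langle H,x\rangle^2$ gives $\langle H,x\rangle = 0$ almost surely for each $x\in\Null(C)$, and separability lets you pass to a countable orthonormal basis of $\Null(C)$ and thereby collect the null sets into one. The paper does not actually prove this statement in the text --- it simply cites \cite{horta2015identifying}, Theorem~1 --- so there is no in-paper argument to compare against; your self-contained proof is the standard one and would serve well here.
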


\begin{proof}
See \cite[Theorem~1]{horta2015identifying}.
\end{proof}

\begin{corollary}\label{thm:hilbert-representations}
In the conditions of Theorem~\ref{thm:xi-orthogonal-ker}, let $d:=\rank\left(C\right)$, where possibly $d=\infty$. Let $\left(\lambda_j\right)_{j=1}^d$ be the sequence of nonzero eigenvalues of $C$ (with repetitions), and $\left(\varphi_j\right)_{j=1}^d$ be the associated sequence of orthonormal eigenvectors. Then
\begin{equation*}
H(\omega) = \sum_{j=1}^d\langle H(\omega),\varphi_j\rangle\varphi_j
\end{equation*}
in $\mathscr{H}$ almost surely. Moreover, the scalar random variables $\langle H,\varphi_j\rangle$ are such that $\E\langle H,\varphi_j\rangle = 0$, $\E\langle H,\varphi_j\rangle^2 = \lambda_j$, and $\E \langle H,\varphi_i\rangle \langle H,\varphi_j\rangle = 0$ if $i\neq j$.
\end{corollary}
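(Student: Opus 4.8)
The plan is to combine the spectral theorem for the covariance operator $C$ with the orthogonality relation furnished by Theorem~\ref{thm:xi-orthogonal-ker}. First I would recall that, since $H$ is of strong second order, its covariance operator $C$ is positive, self-adjoint and trace-class, hence compact. The spectral theorem for compact self-adjoint operators then produces an orthonormal basis of $\mathscr{H}$ made up of eigenvectors of $C$; removing those attached to the eigenvalue $0$ leaves exactly the family $\left(\varphi_j\right)_{j=1}^d$, and the closed linear span of $\left\{\varphi_j:\,j=1,\dots,d\right\}$ equals $\overline{\Ran\left(C\right)}$, which in turn equals $\Null\left(C\right)^\perp$ because $C$ is bounded and self-adjoint. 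Consequently, every $h\in\mathscr{H}$ admits the orthogonal decomposition $h = P h + \sum_{j=1}^d\langle h,\varphi_j\rangle\varphi_j$, where $P$ is the orthogonal projection onto $\Null\left(C\right)$ and the series converges in $\mathscr{H}$ (it is a finite sum when $d<\infty$).

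Next I would apply this decomposition pointwise to $h=H\left(\omega\right)$. By Theorem~\ref{thm:xi-orthogonal-ker}, there is a $\Prob$--null set outside of which $H\left(\omega\right)\in\Null\left(C\right)^\perp$, so that $P H\left(\omega\right)=0$ for such $\omega$; this yields $H\left(\omega\right)=\sum_{j=1}^d\langle H\left(\omega\right),\varphi_j\rangle\varphi_j$ almost surely. Measurability of the coefficients $\langle H,\varphi_j\rangle$ poses no difficulty, since $H$ is strongly measurable and each functional $\langle\cdot,\varphi_j\rangle$ is continuous and linear.

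It then remains to verify the moment identities, which are immediate from the defining properties of the Pettis expectation and of the covariance operator. As $H$ is centered and $\langle\cdot,\varphi_j\rangle$ is a bounded linear functional, $\E\langle H,\varphi_j\rangle=\langle\E H,\varphi_j\rangle=0$. By the very definition of $C$ one has $\E\left[\langle H,\varphi_i\rangle\langle H,\varphi_j\rangle\right]=\langle C\varphi_i,\varphi_j\rangle$, and substituting $C\varphi_i=\lambda_i\varphi_i$ together with the orthonormality of the $\varphi_j$ gives $\E\langle H,\varphi_j\rangle^2=\lambda_j$ and $\E\langle H,\varphi_i\rangle\langle H,\varphi_j\rangle=0$ for $i\neq j$.

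I expect no serious obstacle here: the only point demanding a little care is the identification of the closed span of $\left\{\varphi_j\right\}$ with $\Null\left(C\right)^\perp$ via the spectral theorem, together with the standard fact (used to guarantee compactness of $C$) that the covariance operator of a strong second--order random element is trace-class. Everything else reduces to Theorem~\ref{thm:xi-orthogonal-ker} and to the routine interplay between Bochner expectation, covariance operators, and continuous linear functionals.
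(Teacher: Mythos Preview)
Your proposal is correct and is precisely the argument the paper has in mind: the paper's own ``proof'' of this corollary reads simply ``This is left as an exercise,'' and you have filled in that exercise exactly as intended, combining the spectral theorem for the trace-class covariance operator with Theorem~\ref{thm:xi-orthogonal-ker} and the standard identities $\E\langle H,\varphi_j\rangle = \langle \E H,\varphi_j\rangle$ and $\E\langle H,\varphi_i\rangle\langle H,\varphi_j\rangle = \langle C\varphi_i,\varphi_j\rangle$.
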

\begin{proof}[Proof of Corollary~\ref{thm:hilbert-representations}]
	This is left as an exercise.
\end{proof}

\section{Proofs}\label{sec:proofs}
In this section we shall write $\Xi := \sigma\left(\xi_0,\xi_1,\dots\right)$. Whenever we need to indicate dependence of $\xi_t$ or $F_t$ on the sample space, we shall write $\xi_t^\omega$, $F_t^\omega$, etc. For the random variables $X_\tau$ we write $X_\tau\left(\omega\right)$ as usual.

\begin{proof}[Proof of Lemma~\ref{thm:X_tau-unconditional-distribution}]
One has $\Prob\left(X_\tau\in B\right) = \E\left[\Prob\left(X_\tau\in B\,\vert\, \xi_0,\xi_1,\dots\right)\right] = \E\left[\xi_{\floor{\tau}}\left(B\right)\right] = \E\xi_0\left(B\right)$ by Lemma~\ref{thm:random-measure-basics} and Assumption~S.
\end{proof}

\begin{proof}[Proof of Theorem~\ref{thm:WLLN}.]
Without loss of generality, let $\tau_i = i$. Write $\xi_t\left(f\right) = \int f\,d\xi_t$ and similarly $\E\xi_0\left(f\right) = \int f\,d\E\xi_0$. Notice that $\E\xi_0\left(f\right) = \E f\circ X_\tau$, by Lemma~\ref{thm:X_tau-unconditional-distribution}. Let $Y_t = f\circ X_t - \xi_t\left(f\right)$. We have
\begin{equation*}
\abs{\frac{1}{n}\sum_{t=0}^{n-1} f\circ X_t - \E\xi_0\left(f\right)} \leq \abs{\frac{1}{n}\sum_{t=0}^{n-1} Y_t} + \abs{\frac{1}{n}\sum_{t=0}^{n-1} \xi_t\left(f\right) - \E\xi_0\left(f\right)}.
\end{equation*}
The second term in the above sum is $o_\Prob\left(1\right)$ by hypothesis. For the first term, given $\epsilon>0$ we have
\begin{equation}\label{eq:WLLN-2}
\Prob\left\{ \abs{\frac{1}{n}\sum_{t=0}^{n-1} Y_t} > \epsilon \right\} = \E\left[\Prob\left\{ \abs{\frac{1}{n}\sum_{t=0}^{n-1} Y_t} > \epsilon \,\bigg\vert\,\Xi\right\}\right].
\end{equation}
But $\left(Y_t\vert\Xi:\,t=0,1,\dots\right)$ is an independent sequence, with $\E\left[Y_t\vert\Xi\right] = 0$, and therefore a conditional \textsc{lln} gives
\begin{equation*}
	\lim_{n\rightarrow \infty}\Prob\left\{ \abs{\frac{1}{n}\sum_{t=0}^{n-1} Y_t} > \epsilon \,\bigg\vert\,\Xi\right\} = 0,\qquad\mbox{almost surely}.
\end{equation*}
Now the sequence in the above equation is bounded by $1$ and hence the RHS in \eqref{eq:WLLN-2} goes to zero by the Lebesgue Dominated Convergence Theorem.
\end{proof}

\begin{proof}[Proof of Lemma~\ref{thm:hilbert-embedding-F_t}.]
The fact that each $F_t$ is a random element in $L^2\left(\mu\right)$ is a direct consequence of Lemma~\ref{thm:hilbert-embedding}. The remaining assertions are easily established.
\end{proof}

\begin{proof}[Proof of Theorem~\ref{thm:spectral-representation-F_t}]
The first three assertions, and equation~\eqref{eq:F_t-spectral-representation-C_0}, follow directly from Theorem~\ref{thm:xi-orthogonal-ker} and Corollary~\ref{thm:hilbert-representations}. It remains to show that each $\varphi_j$ is a bounded, càdlàg function. To establish boundedness, notice that since $\sup_{\left(x,y\right)}\abs{C_0\left(x,y\right)}\leq 2$, one has
\begin{align*}
	\abs{\lambda_j\varphi_j\left(x\right)} &= \abs{C_0^\mu\varphi_j\left(x\right)} \\
	&\leq 2\int \abs{\varphi_j\left(y\right)}\,\mu\left(dy\right).
\end{align*}
Thus, as $\varphi_j\in L^1\left(\mu\right)$, one has $\sup_x\abs{\varphi_j\left(x\right)}<\infty$.

Now let $x_n\uparrow x\in\mathbb{R}$. Put $h_n\left(\omega\right) := F_0^\omega\left(x_n\right)\int F_0^\omega\left(y\right)\varphi_j\left(y\right)\,\mu\left(dy\right)$, and define $h\left(\omega\right)=\lim h_n\left(\omega\right)$. Now $\abs{h_n}$ is bounded by a constant and hence the Lebesgue Dominated Convergence Theorem gives $\lim\E h_n = \E h$. Since $\E h_n = \lambda_j\varphi_j\left(x_n\right)$, the làg part of the assertion is proved. For the càd part a similar argument can be followed. The details are left as an exercise.
\end{proof}

\begin{proof}[Proof of Proposition~\ref{thm:mu-nu-equivalence}]
The fact that $\rank\left(C_0^\mu\right) = \rank\left(C_0^\nu\right)$ is a direct consequence of the argument below. By definition $\mu$ and $\nu$ are equivalent to Lebesgue measure on the interval $I_0:=$ `intersection of all closed intervals containing $\supp\left(\E\xi\right)$'. If $I_0$ is bounded and $x\notin I_0$, then it is easily seen that $C_0\left(x,y\right) = 0$ and hence $\varphi_i^\mu\left(x\right) = \varphi_i^\nu\left(x\right) = 0$. In any case we can assume without loss of generality that $\supp\left(\mu\right) = \supp\left(\nu\right) = I_0$.
Let
\begin{equation*}
	g_n\left(x,y\right) := C_0\left(x,y\right) - \sum_{j=1}^n \lambda_j^\mu\varphi_j^\mu\left(x\right)\varphi_j^\mu\left(y\right)
\end{equation*}
By Theorem~3.a.1 in K\"{o}nig~\cite{konig2013eigenvalue}, there is a measurable subset $E\subset\mathbb{R}\times\mathbb{R}$ such that $\mu\otimes\mu\left(E^c\right) = 0$ and $L_n := \sup_{\left(x,y\right)\in E} \abs{g_n\left(x,y\right)}\rightarrow 0$. Now since $\nu\ll\mu$, we have $\nu\otimes\mu\ll\mu\otimes\mu$ and thus $\nu\otimes\mu\left(E^c\right) = 0$. Thus, the Minkowski integral inequality (Lieb and Loss~\cite[Theorem~2.4]{lieb2001analysis}) gives
\begin{align*}
	\norm{\lambda_i^\nu\varphi_i^\nu - \sum_{j=1}^n\lambda_j^\mu\varphi_j^\mu \langle \varphi_j^\mu, \varphi_i^\nu\rangle_\nu}_\mu &= \left\{\int \left( \int {g_n\left(x,y\right) \varphi_i^\nu\left(y\right)} \nu\left(dy\right) \right)^2 \mu\left(dx\right)\right\}^{1/2} \\
	&\leq \int \left( \int g_n\left(x,y\right)^2\varphi_i^\nu\left(y\right)^2 \mu\left(dx\right) \right)^{1/2} \nu\left(dy\right) \\
	&= \int \left( \int g_n\left(x,y\right)^2\varphi_i^\nu\left(y\right)^2\,\mathbb{I}_E\left(x,y\right) \mu\left(dx\right) \right)^{1/2} \nu\left(dy\right) \\
	&\leq L_n \int \left( \int \varphi_i^\nu\left(y\right)^2\,\mathbb{I}_E\left(x,y\right) \mu\left(dx\right) \right)^{1/2} \nu\left(dy\right) \rightarrow 0.
\end{align*}
This shows that $\left\{\varphi_1^\nu, \varphi_2^\nu,\dots\right\}$ is contained in the $L^2\left(\mu\right)$--closure of $\left\{\varphi_1^\mu, \varphi_2^\mu,\dots\right\}$. A similar computation establishes that
\begin{equation*}
	\norm{\lambda_i^\mu\varphi_i^\mu - \sum_{j=1}^n\lambda_j^\nu\varphi_j^\nu \langle \varphi_j^\nu, \varphi_i^\mu\rangle_\mu}_\mu \rightarrow 0,
\end{equation*}
that is, $\left\{\varphi_1^\mu, \varphi_2^\mu,\dots\right\}$ is contained in the $L^2\left(\mu\right)$--closure of $\left\{\varphi_1^\nu, \varphi_2^\nu,\dots\right\}$. This concludes the proof.
\end{proof}

\bigskip

\begin{proof}[Proof of Lemma~\ref{thm:epsilon-is-well-behaved}]
For item \ref{thm:epsilon-is-well-behaved-item1}, we have
\begin{equation*}
	\E\big[\varepsilon_t\left(x\right)\,\vert\,\Xi\big] = \E\big[\widehat{F}_t\left(x\right) - F_t\left(x\right)\vert \Xi\big] = \E\big[\widehat{F}_t\left(x\right)\vert\Xi\big] - F_t\left(x\right).
\end{equation*}
Now
\begin{equation}\label{eq:EG_t-useful-identity}
	\E\big[\widehat{F}_t\left(x\right)\vert\Xi\big] = \frac{1}{q_t}\sum_{i=1}^{q_t}\E\left[\mathbb{I}_{\left\{X_{it}\leq x\right\}}\vert\Xi\right] =F_t\left(x\right).
\end{equation}
This yields the stated equality. For \ref{thm:epsilon-is-well-behaved-item2} it is immediate that we have, by item~\ref{thm:epsilon-is-well-behaved-item1}, the equality $\E\left[F_t\left(x\right)\varepsilon_{t+k}\left(y\right)\,\vert\,\Xi\right] = F_t\left(x\right)\E\left[\varepsilon_{t+k}\left(y\right)\vert\Xi\right]=0$.

To establish \ref{thm:epsilon-is-well-behaved-item3}, a straightforward calculation and using \eqref{eq:EG_t-useful-identity} will give
\begin{align*}
\E\big[\varepsilon_t\left(x\right)\varepsilon_{t+k}\left(y\right)\,\vert\,\Xi\big] 
&= \E\big[\widehat{F}_t\left(x\right)\widehat{F}_{t+k}\left(y\right)\,\vert\,\Xi\big] - F_{t+k}\left(y\right)F_t\left(x\right).
\end{align*}
Then
\begin{align*}
\E\left[\widehat{F}_t\left(x\right)\widehat{F}_{t+k}\left(y\right)\,\vert\,\Xi\right]=\frac{1}{q_t q_{t+k}}\sum_{i=1}^{q_t}\sum_{j=1}^{q_{t+k}}\E\left[\mathbb{I}_{\left[X_{it}\leq x\right]}\mathbb{I}_{\left[X_{j,t+k}\leq y\right]}\,\vert\,\Xi\right],
\end{align*}
but
\begin{align*}
\E\left[\mathbb{I}_{\left[X_{it}\leq x\right]}\mathbb{I}_{\left[X_{j,t+k}\leq y\right]} \vert \Xi\right] &= \Prob\left[X_{it}\leq x,\,X_{j,t+k}\leq y \vert \Xi\right] = F_t\left(x\right)F_{t+k}\left(y\right)
\end{align*}
by the cyclic--independence assumption via \eqref{eq:cyclic-independence}. This yields the stated result.
\end{proof}

\begin{proof}[Proof of Theorem~\ref{thm:spectral-representation-F_t-Rmu}]
	This is an immediate consequence of the stated assumptions and is left as an exercise.
\end{proof}

\begin{proof}[Proof of Lemma~\ref{thm:finite-rank-equivalence1}]
Without loss of generality, let $k=1$. We have
\begin{equation*}
C_1\left(x,y\right)=\sum_{i=1}^d\sum_{j=1}^d\left(\E Z_{0i}Z_{1j}\right)\varphi_i\left(x\right)\varphi_j\left(y\right).
\end{equation*}
This establishes $\Ran\left(C_1^\mu\right)\subset \Ran\left(C_0^\mu\right)$. Clearly the reverse inclusion holds if and only if $\left(\E Z_{0i}Z_{1j}\right)_{ij}$ is full-rank, as stated. The details can be found in the proof of Proposition~1 in Bathia et al.~\cite{bathia2010identifying}.
\end{proof}

\begin{proof}[Proof of Corollary~\ref{thm:finite-rank-equivalence2}]
For simplicity and without loss of generality, assume that the required full--rank property holds with $k=1$. Let $p = 1$. Thus $R^\mu$ is the integral operator with kernel $$R_\mu\left(x,y\right)=\int C_1\left(x,z\right)C_1\left(y,z\right)\,\mu\left(dz\right).$$ Put another way we have $R^\mu = C_1^\mu\left(C_1^\mu\right)^*$, where $C_1^\mu$ is the integral operator with kernel $C_1\left(x,y\right)$ and $*$ means adjoining. Now use Lemma~\ref{thm:finite-rank-equivalence1} and the fact that $\overline{\Ran\left(TT^*\right)} = \overline{\Ran\left(T\right)}$ for any bounded linear operator $T$ on a separable Hilbert space.
\end{proof}

\begin{proof}[Proof of Theorem~\ref{thm:LLN-for-Fhat}]
Recall that the map $\nu\mapsto\left(x\mapsto \nu\left(-\infty,x\right]\right)$ from $M_1\left(\mathbb{R}\right)$ to $L^2\left(\mu\right)$ is continuous (see the proof of Lemma~\ref{thm:hilbert-embedding}). Thus, by the Continuous Mapping Theorem, the assumption that $n^{-1}\sum_{t=1}^n\xi_t\rightarrow\E\xi_0$ in probability implies that $n^{-1}\sum_{t=1}^n F_t\rightarrow\E F_0$ in probability (in $L^2\left(\mu\right)$). This establishes \ref{thm:LLN-for-Fhat-item-i}. Item \ref{thm:LLN-for-Fhat-item-ii} is left as an exercise.

For the last assertion in the Theorem, let $	H_{t} := \widehat{F}_t - F_t$. Observe that $H_{t}$ is a strong order $2$ (indeed, bounded) random element in the Hilbert space $L^2\left(\mu\right)$. Now notice that
\begin{equation*}
	\Big\Vert \widehat{\E}F_0 - \E{F_0}\Big\Vert \leq \norm{\frac{1}{n}\sum_{t=1}^n H_{t}} + \norm{\frac{1}{n}\sum_{t=1}^n F_t - \E{F_0}}.
\end{equation*}
The second term in the above sum is $O_\Prob\left(n^{-1/2}\right)$ by assumption. For the first term, we will need the following result.
\begin{lemma}[Hilbert space Hoeffding Inequality. {Boucheron et al.~\cite[p. 172]{boucheron2013concentration}}]
Let $H_1,\dots,H_n$ be independent, centered random elements in a separable Hilbert space $\mathscr{H}$. If for some $c>0$ one has $\norm{H_t}\leq c/2$ for all $t$, then for each $\epsilon \geq c/2$ it holds that
\begin{equation*}
	\Prob\left[\norm{\sum_{t=1}^n H_t} > \sqrt{n}\epsilon\right] \leq \exp\left(-\frac{\left(\epsilon - c/2\right)^2}{c^2/2} \right).
\end{equation*}
\end{lemma}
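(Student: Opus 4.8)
The plan is to obtain this concentration bound from the bounded differences inequality (McDiarmid's inequality) applied to the functional $\left(h_1,\dots,h_n\right)\mapsto\norm{\sum_{t=1}^n h_t}$, together with a Jensen-type estimate of its expectation. (Since the statement is quoted from Boucheron et al.~\cite{boucheron2013concentration}, one option is simply to cite it; what follows is how I would reconstruct the argument.) First I would verify the bounded differences property: for $h_1,\dots,h_n,h_t'\in\mathscr{H}$ with $\norm{h_s}\leq c/2$ for all $s$ and $\norm{h_t'}\leq c/2$, the triangle inequality gives
\begin{equation*}
	\abs{\,\norm{\textstyle\sum_{s}h_s} - \norm{\textstyle\sum_{s\neq t}h_s + h_t'}\,} \leq \norm{h_t - h_t'} \leq \norm{h_t}+\norm{h_t'} \leq c,
\end{equation*}
so $f\left(h_1,\dots,h_n\right):=\norm{\sum_t h_t}$ has bounded differences with every constant equal to $c$. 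Since $H_1,\dots,H_n$ are independent — and McDiarmid's inequality holds for independent inputs valued in an arbitrary measurable space, in particular in $\mathscr{H}$ — this yields $\Prob\left[\norm{\sum_t H_t} - \E\norm{\sum_t H_t} > u\right] \leq \exp\left(-2u^2/(nc^2)\right)$ for every $u>0$.

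Second, I would bound the centering term. By Jensen's inequality and the orthogonality in $L^2\left(\mathscr{H}\right)$ of independent centered summands (that is, $\E\langle H_s,H_t\rangle = \langle\E H_s,\E H_t\rangle = 0$ for $s\neq t$ by independence and centering),
\begin{equation*}
	\E\norm{\textstyle\sum_t H_t} \leq \Big(\E\norm{\textstyle\sum_t H_t}^2\Big)^{1/2} = \Big(\textstyle\sum_t \E\norm{H_t}^2\Big)^{1/2} \leq \sqrt{n}\,\tfrac{c}{2}.
\end{equation*}
Finally, for $\epsilon\geq c/2$ one has $\sqrt{n}\,\epsilon \geq \E\norm{\sum_t H_t} + \sqrt{n}\left(\epsilon-c/2\right)$, so taking $u=\sqrt{n}\left(\epsilon-c/2\right)\geq 0$ in the concentration bound above gives $\Prob\left[\norm{\sum_t H_t}>\sqrt{n}\,\epsilon\right]\leq\exp\left(-2n(\epsilon-c/2)^2/(nc^2)\right)=\exp\left(-(\epsilon-c/2)^2/(c^2/2)\right)$, as claimed, with the degenerate case $\epsilon=c/2$ trivial.

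I do not expect a genuine obstacle here. The two points requiring a little care are that the bounded differences constant is $c$ rather than $c/2$ — a single coordinate swap can move $f$ by up to $\norm{h_t}+\norm{h_t'}$ — and the (routine) observation that McDiarmid's inequality applies verbatim to $\mathscr{H}$-valued inputs. An alternative route would be Pinelis's Hilbert-space Azuma inequality applied to the martingale $S_k=\sum_{t\leq k}H_t$ with increments $\norm{S_k-S_{k-1}}\leq c/2$, but this introduces a spurious factor of $2$ and does not reproduce the stated constant, so I would keep the bounded-differences argument.
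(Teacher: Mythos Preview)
Your reconstruction is correct, and it is essentially the argument given in Boucheron--Lugosi--Massart (Example~6.3): bounded differences with constant $c$ for the functional $\left(h_1,\dots,h_n\right)\mapsto\norm{\sum_t h_t}$, combined with the variance bound $\E\norm{\sum_t H_t}\leq\sqrt{n}\,c/2$. The paper itself does not prove this lemma but simply cites that reference, so your proposal in fact supplies more than what the paper does.
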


\bigskip
The assumption that $\left(\xi_t,X_\tau\right)$ is cyclic--independent ensures that $(H_{t}\vert\Xi:\,t=1,2,\dots)$ is an independent sequence of centered random elements in $L^2\left(\mu\right)$, with $\norm{H_{t}}\leq \sqrt{\abs{\mu}}$. Thus, for $c = 2\sqrt{\abs{\mu}}$, we have
\begin{equation*}
\Prob\left[\Big\Vert \sum_{t=1}^n H_{t} \Big\Vert > \sqrt{n}\epsilon\, \Bigg\vert \Xi \right] \leq \exp\left(-\frac{\left(\epsilon - c/2\right)^2}{c^2/2} \right),\qquad\mbox{almost surely.}
\end{equation*}
Taking expectation on both sides yields the stated result.
\end{proof}

\begin{proof}[Proof of Theorem~\ref{thm:LLN-for-psihat-no-rate}]
	Without loss of generality, let $p = 1$. Recall that
	\begin{equation*}
		C_1\left(x,y\right) = \E F_0\left(x\right)F_1\left(y\right) - \E F_0\left(x\right)\E F_0\left(y\right).
	\end{equation*}
	A straightforward calculation establishes that
	\begin{equation*}
		\widehat{C}_1\left(x,y\right) = \frac{1}{n-1}\sum_{t=1}^{n-1}\widehat{F}_t\left(x\right)\widehat{F}_{t+1}\left(y\right) \quad - \quad \widehat{\E}F_0\left(x\right)\widehat{E}F_0\left(y\right) + o_\Prob\left(1\right)
	\end{equation*}
	in $L^2\left(\mu\otimes\mu\right)$. Since, by Theorem~\ref{thm:LLN-for-Fhat}, we have that $\widehat{\E}F_0\left(x\right)\widehat{E}F_0\left(y\right) = \E F_0\left(x\right)\E F_0\left(y\right) + o_\Prob\left(1\right)$ in $L^2\left(\mu\otimes\mu\right)$, it only remains to show that
	\begin{equation*}
		\frac{1}{n}\sum_{t=1}^{n}\widehat{F}_t\left(x\right)\widehat{F}_{t+1}\left(y\right) = \E F_0\left(x\right)F_1\left(y\right) + o_\Prob\left(1\right)
	\end{equation*}
	in $L^2\left(\mu\otimes\mu\right)$. We have
	\begin{align*}
		{\frac{1}{n}\sum_{t=1}^n \widehat{F}_t\left(x\right)\widehat{F}_{t+1}\left(y\right) - \E F_0\left(x\right)\E F_0\left(y\right)} &=
		\left(\frac{1}{n}\sum_{t=1}^n \widehat{F}_t\left(x\right)\widehat{F}_{t+1}\left(y\right) - \frac{1}{n}\sum_{t=1}^n {F}_t\left(x\right){F}_{t+1}\left(y\right)\right) \\
		&\quad + \left(\frac{1}{n}\sum_{t=1}^n {F}_t\left(x\right){F}_{t+1}\left(y\right) - \E{F}_0\left(x\right)\E{F}_0\left(y\right)\right).
	\end{align*}
	The second term in the above sum is $o_\Prob\left(1\right)$ in $L^2\left(\mu\otimes\mu\right)$, by hypothesis. For the first term, let
	\begin{equation*}
		H_t\left(x,y\right):= \widehat{F}_t\left(x\right)\widehat{F}_{t+1}\left(y\right) - F_t\left(x\right)F_{t+1}\left(y\right).
	\end{equation*}
	 Clearly $\big(H_t:\,t=1,2,\dots\big)$ is a sequence of random elements in $L^2\left(\mu\otimes\mu\right)$. Moreover, the assumption that $\left(\xi_t,X_\tau\right)$ is cyclic--independent ensures that $\big(H_{2t}\vert\Xi:\,t=1,2,\dots\big)$ is a \emph{centered}, \emph{independent} sequence of random elements in $L^2\left(\mu\otimes\mu\right)$. The same is true of $\big(H_{2t-1}\vert\Xi:\,t=1,2,\dots\big)$. Thus, by the Law of Large Numbers for Hilbert spaces, one has for each $\epsilon>0$
	 \begin{equation*}
	 	\lim_{n\rightarrow\infty}\Prob\left(\norm{\frac{1}{n}\sum_{t=1}^n H_{2t}}_{L^2\left(\mu\otimes\mu\right)} > \epsilon\,\bigg\vert\, \Xi\right) = 0,\qquad \mbox{almost surely},
	 \end{equation*}
	and since the above sequence is bounded by $1$, the Lebesgue Dominated Convergence Theorem gives
	\begin{equation*}
	 	\lim_{n\rightarrow\infty}\Prob\left(\norm{\frac{1}{n}\sum_{t=1}^n H_{2t}}_{L^2\left(\mu\otimes\mu\right)} > \epsilon\right) \rightarrow 0.
	 \end{equation*}
	 This establishes that $\widehat{C}_1\left(x,y\right) = C_1\left(x,y\right) + o_\Prob\left(1\right)$ in $L^2\left(\mu\otimes\mu\right)$, and hence $\Vert\widehat{C}_1^\mu - C_1^\mu\Vert_{HS} = o_\Prob\left(1\right)$. Continuity of the operations of adjoining and composition imply that $\Vert\widehat{R}^\mu - R^\mu\Vert = o_\Prob\left(1\right)$. The remaining assertions are an immediate consequence of Theorem~1.1 in Mas and Menneteau~\cite{mas2003perturbation}.
\end{proof}

\begin{proof}[Proof of Theorem~\ref{thm:LLN-for-psihat}]
Notice that condition C2 in Bathia et al.~\cite{bathia2010identifying} is always satisfied in our setting. Their conditions C1 and C3 correspond to the assumptions in Theorem~\ref{thm:LLN-for-psihat}. Condition C4 there is item~\ref{thm:epsilon-is-well-behaved-item2} in our Lemma~\ref{thm:epsilon-is-well-behaved}. It only remains to observe that their proof is valid in any separable Hilbert space and not only in $L^2\left(\left[a,b\right]\right)$.
\end{proof}

\begin{proof}[Proof of Proposition~\ref{thm:LLN_etahat}]
Notice that both $\psi_j$ and $-\psi_j$ are normalized eigenfunctions of $R^\mu$. We assume that the `right' one has been picked. To obtain \eqref{eq:eta-hat-asymptotics}, add and subtract the terms $\langle\widehat{F}_t,\psi_j\rangle$ and $\langle\E F_0,\psi_j\rangle$ to \eqref{eq:W-hat-definition},  and then use \eqref{eq:empirical-df-equals-F-plus-error} and \eqref{eq:W_tj-definition}; $\widehat{\rho}_j$ is defined implicitly in this procedure. The bound on $\abs{\widehat{\rho}_j}$ is just an application of the Cauchy--Schwarz inequality, together with the fact that $\norm{\psi_j}$ and $\Vert\widehat{\psi}_j\Vert$ are equal to 1 by construction, and that both $\Vert\widehat{\E}F_0\Vert$ and $\Vert \widehat{F}_t\Vert$ are bounded by the square root of $\abs{\mu}:=\mu\left(\mathbb{R}\right)$. The remaining assertions are an immediate consequence of Lemma~\ref{thm:epsilon-is-well-behaved}.
\end{proof}

\begin{proof}[Proof of Corollary~\ref{thm:LLN2_etahat}]
The Corollary is an immediate consequence of the stated assumptions. The details are left as an exercise.
\end{proof}

\begin{proof}[Proof of Proposition~\ref{thm:d-hat-consistency}]
	See Theorem~3 in Bathia et al.~\cite{bathia2010identifying} and the Appendix~B therein.
\end{proof}

\begin{proof}[Proof of Corollary~\ref{thm:F-tilde-asymptotics}]
	The corollary is a direct consequence of the stated hypotheses and is left as an exercise for the reader.
\end{proof}

\section{Estimation procedure and numerical computation}\label{sec:estimation}
This section describes how one can obtain estimates of the $\psi_j$ and $W_{tj}$ through straightforward matrix analysis. We shall restrict our attention to the scenario where $d<\infty$. This approach is adopted by Bathia et al.~\cite{bathia2010identifying}. The idea is to represent the operator $\widehat{R}^\mu$ as an infinite matrix acting on the canonical Hilbert space $\ell^2$, and then to obtain a $\left(n-p\right)\times\left(n-p\right)$ matrix whose spectrum coincides with that of $\widehat{R}^\mu$. The construction relies on the fact that given any operators $A$ and $B$, it is always true that $AB^*$ and $B^*A$ share the same nonzero eigenvalues. The representation of $\widehat{R}^\mu$ is given by the $\infty\times\infty$ matrix
\begin{equation*}
	\frac{1}{\left(n-p\right)^{2}}\boldsymbol{G}_{0}\sum_{k=1}^{p}\boldsymbol{G}_{k}'\boldsymbol{G}_{k}\boldsymbol{G}_{0}',
\end{equation*}
where $\boldsymbol{G}_{k}=\left[\boldsymbol{g}_{1+k}\ \dots\ \boldsymbol{g}_{n-p+k}\right]$ and $\boldsymbol{g}_t\in\ell^2$ is such that $\boldsymbol{g}_t'\boldsymbol{g}_s = \langle \widehat{F}_t-\widehat{\E}F_0, \widehat{F}_s - \widehat{\E}F_0\rangle$. Now apply the duality discussed above with $A = \boldsymbol{G}_0$ and \linebreak
$B = \sum_{k=1}^{p}\boldsymbol{G}_k'\boldsymbol{G}_k\boldsymbol{G}_0'$ to obtain the $\left(n-p\right)\times\left(n-p\right)$ matrix
\begin{equation}
\boldsymbol{M}:=\frac{1}{\left(n-p\right)^{2}}\sum_{k=1}^{p}\boldsymbol{G}_{k}'\boldsymbol{G}_{k}\boldsymbol{G}_{0}'\boldsymbol{G}_{0}.\label{eq:K_star}
\end{equation}
To be explicit, the entry $\left(t,s\right)$ of $\boldsymbol{G}_k'\boldsymbol{G}_k$ is the inner product $\big\langle \widehat{F}_{t+k} - \widehat{\E}F_0,\, \widehat{F}_{s+k} - \widehat{\E}F_0\,\big\rangle $. The preceding heuristics establishes the first claim of the following Proposition.

\begin{proposition}\label{thm:matrix-Rmu-equivalence}
\label{thm:BYZ_prop2} The $\left(n-p\right)\times\left(n-p\right)$ matrix $\boldsymbol{M}$ shares the same nonzero eigenvalues with the operator $\widehat{R}^\mu$. Moreover, the associated eigenfunctions of $\widehat{R}^\mu$ are given by
\begin{equation}
\widetilde{\psi}_{j}\left(x\right)=\sum_{t=1}^{n-p}\gamma_{jt}\big(\widehat{F}_{t}\left(x\right)-\widehat{\E}F_0\left(x\right)\big),\label{eq:psi_tilde}
\end{equation}
 where $\gamma_{jt}$ is the $t$-th component of the eigenvector $\boldsymbol{\gamma}_{j}$ associated to the $j$-th largest eigenvalue of $\boldsymbol{M}$.
\end{proposition}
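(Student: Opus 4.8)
The plan is to realize $\widehat{R}^\mu$ as a product $ST$ of two bounded operators and to recognize the matrix $\boldsymbol{M}$ of \eqref{eq:K_star} as the companion product $TS$. Both assertions of the Proposition then follow from the elementary spectral fact that, for bounded operators $S\colon\mathbb{R}^{n-p}\to L^2\left(\mu\right)$ and $T\colon L^2\left(\mu\right)\to\mathbb{R}^{n-p}$, the operators $ST$ and $TS$ have the same nonzero eigenvalues with the same multiplicities, and $\boldsymbol{v}\mapsto S\boldsymbol{v}$ carries an eigenvector of $TS$ for a nonzero eigenvalue to an eigenvector of $ST$ for the same eigenvalue.

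First I would introduce the synthesis operators. Put $V_t:=\widehat{F}_t-\widehat{\E}F_0\in L^2\left(\mu\right)$ for $t=1,\dots,n$, and for each $k\in\left\{0,1,\dots,p\right\}$ define $\Phi_k\colon\mathbb{R}^{n-p}\to L^2\left(\mu\right)$ by $\Phi_k\boldsymbol{a}:=\sum_{t=1}^{n-p}a_t\,V_{t+k}$; its adjoint is $\Phi_k^{*}f=\big(\langle V_{t+k},f\rangle\big)_{t=1}^{n-p}$. A one--line computation from \eqref{eq:C_k_hat} gives $\widehat{C}_k^\mu=\tfrac{1}{n-p}\Phi_0\Phi_k^{*}$, hence $\widehat{C}_k^{\mu*}=\tfrac{1}{n-p}\Phi_k\Phi_0^{*}$; since the kernel \eqref{eq:M_hat} of $\widehat{R}^\mu$ is exactly that of $\sum_{k=1}^p\widehat{C}_k^\mu\widehat{C}_k^{\mu*}$, summation yields
\begin{equation*}
\widehat{R}^\mu=\frac{1}{\left(n-p\right)^2}\,\Phi_0\Big(\textstyle\sum_{k=1}^p\Phi_k^{*}\Phi_k\Big)\Phi_0^{*}.
\end{equation*}
Because $\boldsymbol{g}_t'\boldsymbol{g}_s=\langle V_t,V_s\rangle$ by construction of the $\boldsymbol{g}_t$, the Gram matrices satisfy $\Phi_k^{*}\Phi_k=\boldsymbol{G}_k'\boldsymbol{G}_k$ and $\Phi_0^{*}\Phi_0=\boldsymbol{G}_0'\boldsymbol{G}_0$. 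I would note in passing that such $\ell^2$--vectors $\boldsymbol{g}_t$ exist simply because $\operatorname{span}\left\{V_1,\dots,V_n\right\}$ is finite--dimensional, hence isometric to a subspace of $\ell^2$; this is only a convenient device, and the whole argument could equally be phrased directly with the $V_t$.

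Then I would set $S:=\Phi_0$ and $T:=\tfrac{1}{\left(n-p\right)^2}\big(\sum_{k=1}^p\boldsymbol{G}_k'\boldsymbol{G}_k\big)\Phi_0^{*}$, so that $ST=\widehat{R}^\mu$ by the display above, while
\begin{equation*}
TS=\frac{1}{\left(n-p\right)^2}\Big(\textstyle\sum_{k=1}^p\boldsymbol{G}_k'\boldsymbol{G}_k\Big)\Phi_0^{*}\Phi_0=\frac{1}{\left(n-p\right)^2}\Big(\textstyle\sum_{k=1}^p\boldsymbol{G}_k'\boldsymbol{G}_k\Big)\boldsymbol{G}_0'\boldsymbol{G}_0=\boldsymbol{M}.
\end{equation*}
The $ST$/$TS$ identity gives the first claim. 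For the second, if $\boldsymbol{M}\boldsymbol{\gamma}_j=\widehat{\theta}_j\boldsymbol{\gamma}_j$ with $\widehat{\theta}_j>0$, then $S\boldsymbol{\gamma}_j\neq0$ (otherwise $\widehat{\theta}_j\boldsymbol{\gamma}_j=TS\boldsymbol{\gamma}_j=0$), and $\widehat{R}^\mu\left(S\boldsymbol{\gamma}_j\right)=ST\left(S\boldsymbol{\gamma}_j\right)=S\left(TS\boldsymbol{\gamma}_j\right)=\widehat{\theta}_j\,S\boldsymbol{\gamma}_j$; since $S\boldsymbol{\gamma}_j=\Phi_0\boldsymbol{\gamma}_j=\sum_{t=1}^{n-p}\gamma_{jt}\big(\widehat{F}_t-\widehat{\E}F_0\big)=\widetilde{\psi}_j$, this is precisely the asserted eigenfunction (up to normalization, which recovers the orthonormal $\widehat{\psi}_j$).

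I do not expect a genuine obstacle; the argument is essentially bookkeeping. The two points that deserve a word of care, and that I would make explicit, are: \emph{(i)} the ``$ST$ and $TS$'' spectral fact is applied to operators that are not self--adjoint, which is standard and in fact elementary here since all operators involved have finite rank; and \emph{(ii)} $\boldsymbol{M}$, being the product of two positive semidefinite matrices, is diagonalizable with nonnegative real eigenvalues, so that speaking of its eigenvalues and eigenvectors and pairing them with those of the positive operator $\widehat{R}^\mu$ is legitimate and consistent with $\widehat{R}^\mu$ having rank $d_n=\rank\boldsymbol{M}$.
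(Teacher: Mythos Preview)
Your proof is correct and follows essentially the same route as the paper: both rest on the $AB$/$BA$ (equivalently $ST$/$TS$) spectral duality applied to a factorization of $\widehat{R}^\mu$ through $\mathbb{R}^{n-p}$, with the eigenfunctions recovered as $\Phi_0\boldsymbol{\gamma}_j$. The only cosmetic difference is that the paper's heuristic passes through an $\ell^2$ matrix representation of $\widehat{R}^\mu$ via the vectors $\boldsymbol{g}_t$, whereas you work directly with the synthesis operators $\Phi_k\colon\mathbb{R}^{n-p}\to L^2\left(\mu\right)$; as you yourself note, the $\ell^2$ embedding is dispensable, and your formulation is arguably the cleaner of the two.
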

\begin{proof} See Proposition~2 in Bathia et al.~\cite{bathia2010identifying} and the Appendix~B therein.
\end{proof}

We then let $\widehat{\psi}_{j}:=\widetilde{\psi}_j / \Vert \widetilde{\psi}_j \Vert $ denote the normalized eigenfunctions of $\widehat{R}^\mu$. Notice that in order to obtain the matrix $\boldsymbol{M}$ all one needs is to calculate the inner products $\langle \widehat{F}_t - \widehat{\E}F_0,\,\widehat{F}_s-\widehat{\E}F_0\rangle$ with $t$ and $s$ ranging from $1$ to $n$. An important aspect in our context is that, unlike general Functional Data Analysis methodologies, the explicit formulas for this coefficients can be easily derived. Indeed,
\begin{equation*}
	\big\langle \widehat{F}_t - \widehat{\E}F_0,\,\widehat{F}_s-\widehat{\E}F_0\big\rangle = \langle \widehat{F}_t, \widehat{F}_s\rangle - \langle \widehat{F}_t, \widehat{\E}F_0\rangle - \langle \widehat{F}_s, \widehat{\E}F_0\rangle + \langle \widehat{\E}F_0, \widehat{\E}F_0\rangle,
\end{equation*}
with
\begin{align*}
	\langle \widehat{F}_t, \widehat{F}_s\rangle &= \frac{1}{q_t q_s}\sum_{i=1}^{q_t}\sum_{j=1}^{q_s}\mu\big[ X_{it}\vee X_{js},\,+\infty\big), \\
	\langle\widehat{\E}F_0,\widehat{\E}F_0\rangle &= \frac{1}{n^2}\sum_{t=1}^{n}\sum_{s=1}^{n}\langle \widehat{F}_t, \widehat{F}_s\rangle, \\
	\langle \widehat{F}_t, \widehat{\E}F_0\rangle &= \frac{1}{n}\sum_{s=1}^{n}\langle \widehat{F}_t, \widehat{F}_s\rangle.
\end{align*}
The norms $\Vert\widetilde{\psi}_j\Vert$ can be calculated as well through
\begin{equation*}
	\Vert\widetilde{\psi}_j\Vert^2 = \sum_{t=1}^{n-p}\sum_{s=1}^{n-p}\gamma_{jt}\gamma_{js}\langle \widehat{F}_t - \widehat{\E}F_0, \widehat{F}_s - \widehat{\E}F_0\rangle,
\end{equation*}
and finally the coefficients $\widehat{W}_{tj}$ are given by
\begin{equation*}
	\widehat{W}_{tj} = \frac{1}{\Vert\widetilde{\psi}_j\Vert}\sum_{s=1}^{n-p}\gamma_{js}\langle \widehat{F}_t - \widehat{\E}F_0, \widehat{F}_s - \widehat{\E}F_0\rangle.
\end{equation*}
Computationally, the above formulas are advantageous (in comparison with calculating the inner--products via numerical integration) because they are exact; however, the expression for $\langle \widehat{F}_t, \widehat{F}_s\rangle$ indicates that at least \emph{some} computational cost is inescapable.

\end{appendices}
\section*{References}


\begin{thebibliography}{10}
\expandafter\ifx\csname url\endcsname\relax
  \def\url#1{\texttt{#1}}\fi
\expandafter\ifx\csname urlprefix\endcsname\relax\def\urlprefix{URL }\fi
\expandafter\ifx\csname href\endcsname\relax
  \def\href#1#2{#2} \def\path#1{#1}\fi

\bibitem{bosq2000linear}
D.~Bosq, {Linear Processes in Function Spaces: Theory and Applications}, Vol.
  149 of Lecture Notes in Statistics, Springer-Verlag New York, 2000.
\newblock \href {http://dx.doi.org/10.1007/978-1-4612-1154-9}
  {\path{doi:10.1007/978-1-4612-1154-9}}.

\bibitem{billingsley2009convergence}
P.~Billingsley, {Convergence of Probability Measures}, Vol. 493 of Wiley Series
  in Probability and Statistics, John Wiley {\&} Sons, 1999.
\newblock \href {http://dx.doi.org/10.1002/9780470316962}
  {\path{doi:10.1002/9780470316962}}.

\bibitem{parthasarathy1967probability}
K.~Parthasarathy, {Probability Measures on Metric Spaces}, Vol. 352, American
  Mathematical Society, Providence, Rhode Island, 2005.
\newblock \href {http://dx.doi.org/10.1090/chel/352}
  {\path{doi:10.1090/chel/352}}.

\bibitem{van1996weak}
A.~van~der Vaart, J.~Wellner, {Weak Convergence and Empirical Processes},
  Springer Series in Statistics, Springer-Verlag New York, 1996.
\newblock \href {http://dx.doi.org/10.1007/978-1-4757-2545-2}
  {\path{doi:10.1007/978-1-4757-2545-2}}.

\bibitem{ledoux1991probability}
M.~Ledoux, M.~Talagrand, {Probability in Banach Spaces: Isoperimetry and
  Processes}, Vol.~23 of Classics in Mathematics, Springer-Verlag Berlin
  Heidelberg, 1991.
\newblock \href {http://dx.doi.org/10.1007/978-3-642-20212-4}
  {\path{doi:10.1007/978-3-642-20212-4}}.

\bibitem{vakhania1987probability}
N.~Vakhania, V.~Tarieladze, S.~Chobanyan, {Probability Distributions on Banach
  Spaces}, Vol.~14 of Mathematics and its Applications, Springer Netherlands,
  1987.
\newblock \href {http://dx.doi.org/10.1007/978-94-009-3873-1}
  {\path{doi:10.1007/978-94-009-3873-1}}.

\bibitem{ghosh2003bayesian}
J.~K. Ghosh, R.~V. Ramamoorthi, {Bayesian Nonparametrics}, 1st Edition,
  Statistical Theory and Methods, Springer-Verlag New York, 2003.
\newblock \href {http://dx.doi.org/10.1007/b97842} {\path{doi:10.1007/b97842}}.

\bibitem{ramsay1998functional}
J.~Ramsay, B.~W. Silverman, {Functional Data Analysis}, Springer Series in
  Statistics, Springer-Verlag New York, 1998.
\newblock \href {http://dx.doi.org/10.1007/b98888} {\path{doi:10.1007/b98888}}.

\bibitem{dabo2008functional}
S.~Dabo-Niang, F.~Ferraty (Eds.), {Functional and Operatorial Statistics},
  Contributions to Statistics, Physica-Verlag Heidelberg, 2008.
\newblock \href {http://dx.doi.org/10.1007/978-3-7908-2062-1}
  {\path{doi:10.1007/978-3-7908-2062-1}}.

\bibitem{benko2009common}
M.~Benko, W.~H{\"{a}}rdle, A.~Kneip, {Common functional principal components},
  The Annals of Statistics 37~(1) (2009) 1--34.
\newblock \href {http://dx.doi.org/10.1214/07-AOS516}
  {\path{doi:10.1214/07-AOS516}}.

\bibitem{ferraty2006nonparametric}
F.~Ferraty, P.~Vieu, {Nonparametric Functional Data Analysis: Theory and
  Practice}, Springer Series in Statistics, Springer-Verlag New York, 2006.
\newblock \href {http://dx.doi.org/10.1007/0-387-36620-2}
  {\path{doi:10.1007/0-387-36620-2}}.

\bibitem{damon2005estimation}
J.~Damon, S.~Guillas, {Estimation and Simulation of Autoregressive Hilbertian
  Processes with Exogenous Variables}, Statistical Inference for Stochastic
  Processes 8~(2) (2005) 185--204.
\newblock \href {http://dx.doi.org/10.1007/s11203-004-1031-6}
  {\path{doi:10.1007/s11203-004-1031-6}}.

\bibitem{panaretos2013cramer}
V.~M. Panaretos, S.~Tavakoli, {Cram{\'{e}}r-Karhunen-Lo{\`{e}}ve representation
  and harmonic principal component analysis of functional time series},
  Stochastic Processes and their Applications 123~(7) (2013) 2779--2807.
\newblock \href {http://dx.doi.org/10.1016/j.spa.2013.03.015}
  {\path{doi:10.1016/j.spa.2013.03.015}}.

\bibitem{hall2006assessing}
P.~Hall, C.~Vial, {Assessing the finite dimensionality of functional data},
  Journal of the Royal Statistical Society: Series B (Statistical Methodology)
  68~(4) (2006) 689--705.
\newblock \href {http://dx.doi.org/10.1111/j.1467-9868.2006.00562.x}
  {\path{doi:10.1111/j.1467-9868.2006.00562.x}}.

\bibitem{bathia2010identifying}
N.~Bathia, Q.~Yao, F.~Ziegelmann, {Identifying the finite dimensionality of
  curve time series}, Annals of Statistics 38~(6) (2010) 3352--3386.
\newblock \href {http://dx.doi.org/10.1214/10-AOS819}
  {\path{doi:10.1214/10-AOS819}}.

\bibitem{bradley2005basic}
R.~C. Bradley, {Basic Properties of Strong Mixing Conditions. A Survey and Some
  Open Questions}, Probability Surveys 2~(0) (2005) 107--144.
\newblock \href {http://dx.doi.org/10.1214/154957805100000104}
  {\path{doi:10.1214/154957805100000104}}.

\bibitem{bosq2002estimation}
D.~Bosq, {Estimation of Mean and Covariance Operator of Autoregressive
  Processes in Banach Spaces}, Statistical Inference for Stochastic Processes
  5~(3) (2002) 287--306.
\newblock \href {http://dx.doi.org/10.1023/A:1021279131053}
  {\path{doi:10.1023/A:1021279131053}}.

\bibitem{mas2003perturbation}
A.~Mas, L.~Menneteau, {Perturbation Approach Applied to the Asymptotic Study of
  Random Operators}, in: J.~Hoffmann-J{\o}rgensen, J.~Wellner, M.~Marcus
  (Eds.), {\{}H{\}}igh {\{}D{\}}imensional {\{}P{\}}robability III, Vol.~55 of
  Progress in Probability, Birkh{\"{a}}user Basel, 2003, pp. 127--134.
\newblock \href {http://dx.doi.org/10.1007/978-3-0348-8059-6_8}
  {\path{doi:10.1007/978-3-0348-8059-6_8}}.

\bibitem{dedecker2007empirical}
J.~Dedecker, F.~Merlev{\`{e}}de, {The empirical distribution function for
  dependent variables: asymptotic and nonasymptotic results in
  {\$}L{\^{}}p{\$}}, ESAIM: Probability and Statistics 11 (2007) 102--114.
\newblock \href {http://dx.doi.org/10.1051/ps:2007009}
  {\path{doi:10.1051/ps:2007009}}.

\bibitem{kontorovich2014uniform}
A.~Kontorovich, R.~Weiss, {Uniform Chernoff and Dvoretzky-Kiefer-Wolfowitz-Type
  Inequalities for Markov Chains and Related Processes}, Journal of Applied
  Probability 51~(04) (2014) 1100--1113.
\newblock \href {http://dx.doi.org/10.1017/S0021900200012006}
  {\path{doi:10.1017/S0021900200012006}}.

\bibitem{Corsi19022009}
F.~Corsi, {A Simple Approximate Long-Memory Model of Realized Volatility},
  Journal of Financial Econometrics 7~(2) (2009) 174--196.
\newblock \href {http://dx.doi.org/10.1093/jjfinec/nbp001}
  {\path{doi:10.1093/jjfinec/nbp001}}.

\bibitem{corsi2012har}
F.~Corsi, F.~Audrino, R.~Ren{\`{o}}, {HAR Modeling for Realized Volatility
  Forecasting}, in: L.~Bauwens, C.~Hafner, S.~Laurent (Eds.), Handbook of
  Volatility Models and Their Applications, Wiley handbooks in financial
  engineering and econometrics, John Wiley {\&} Sons, 2012, Ch.~15, pp.
  363--382.
\newblock \href {http://dx.doi.org/10.1002/9781118272039.ch15}
  {\path{doi:10.1002/9781118272039.ch15}}.

\bibitem{mas2002weak}
A.~Mas, {Weak convergence for the covariance operators of a Hilbertian linear
  process}, Stochastic Processes and their Applications 99~(1) (2002) 117--135.
\newblock \href {http://dx.doi.org/10.1016/S0304-4149(02)00087-X}
  {\path{doi:10.1016/S0304-4149(02)00087-X}}.

\bibitem{kokoszka2013asymptotic}
P.~Kokoszka, M.~Reimherr, {Asymptotic normality of the principal components of
  functional time series}, Stochastic Processes and their Applications 123~(5)
  (2013) 1546--1562.
\newblock \href {http://dx.doi.org/10.1016/j.spa.2012.12.011}
  {\path{doi:10.1016/j.spa.2012.12.011}}.

\bibitem{horta2015identifying}
E.~Horta, F.~Ziegelmann, {Identifying the Spectral Representation of Hilbertian
  Time Series}, Statistics and Probability Letters 118 (2016) 45--49.
\newblock \href {http://dx.doi.org/10.1016/j.spl.2016.06.014}
  {\path{doi:10.1016/j.spl.2016.06.014}}.

\bibitem{konig2013eigenvalue}
H.~K{\"{o}}nig, {Eigenvalue Distribution of Compact Operators}, Vol.~16 of
  Operator Theory: Advances and Applications, Birkh{\"{a}}user Basel, 1986.
\newblock \href {http://dx.doi.org/10.1007/978-3-0348-6278-3}
  {\path{doi:10.1007/978-3-0348-6278-3}}.

\bibitem{lieb2001analysis}
E.~Lieb, M.~Loss, {Analysis}, 2nd Edition, Vol.~14 of Graduate Studies in
  Mathematics, American Mathematical Society, Providence, Rhode Island, 2001.
\newblock \href {http://dx.doi.org/10.1090/gsm/014}
  {\path{doi:10.1090/gsm/014}}.

\bibitem{boucheron2013concentration}
S.~Boucheron, G.~Lugosi, P.~Massart, {Concentration Inequalities}, Oxford
  University Press, 2013.
\newblock \href {http://dx.doi.org/10.1093/acprof:oso/9780199535255.001.0001}
  {\path{doi:10.1093/acprof:oso/9780199535255.001.0001}}.

\end{thebibliography}

\end{document}